\newtheorem{theorem}{Theorem}[subsection]
\newtheorem{corollary}[theorem]{Corollary}
\newtheorem{proposition}[theorem]{Proposition}
\newtheorem{fact}[theorem]{Fact}
\theoremstyle{definition}
\newtheorem{definition}[theorem]{Definition}
\newtheorem{remark}[theorem]{Remark}
\newcommand{\Oh}{\operatorname{O}}
\newcommand{\SOh}{\operatorname{SO}}
\newcommand{\POh}{\operatorname{PO}}
\newcommand{\GL}{\operatorname{GL}}
\newcommand{\SL}{\operatorname{SL}}
\newcommand{\oh}{\mathfrak{o}}
\newcommand{\ott}{\oh(3,2)}
\newcommand{\RR}{\mathbb{R}}
\newcommand{\CC}{\mathbb{C}}
\newcommand{\EE}{\mathbb{E}}
\newcommand{\ZZ}{\mathbb{Z}}
\newcommand{\RP}{\mathbb{RP}}
\renewcommand{\Pr}{\mathbb{P}}
\newcommand{\Ein}{\operatorname{Ein}}
\newcommand{\Rnt}{\RR^{n+1,2}}
\newcommand{\Rno}{\RR^{n,1}}
\newcommand{\Rtt}{\RR^{3,2}}
\newcommand{\NN}{{\mathfrak N}}
\newcommand{\Nnt}{\NN^{n+1,2}}
\newcommand{\Nno}{\NN^{n+1,1}}
\newcommand{\Ntt}{\NN^{3,2}}
\newcommand{\Einno}{\Ein^{n,1}}
\newcommand{\EinTO}{\Ein^{2,1}}
\newcommand{\EinOO}{\Ein^{1,1}}
\newcommand{\uEin}{\widehat{\Ein}}
\newcommand{\uEinno}{\uEin^{n,1}}
\newcommand{\uuEin}{\widetilde{\Ein}}
\newcommand{\uuEinno}{\uuEin^{n,1}}
\newcommand{\uuEinTO}{\uuEin^{2,1}}
\newcommand{\uEinTO}{\uEin^{2,1}} 
\newcommand{\Eno}{\EE^{n,1}}
\newcommand{\Pho}{\operatorname{Pho}^{2,1}}
\newcommand{\Min}{\operatorname{Min}}
\newcommand{\dS}{\operatorname{dS}}
\newcommand{\AdS}{\operatorname{AdS}}
\newcommand{\Flag}{\operatorname{Flag}^{2,1}}
\newcommand{\PGL}{\operatorname{PGL}}
\newcommand{\Sp}{\operatorname{Sp}}
\renewcommand{\sp}{\mathfrak{sp}}
\renewcommand{\sl}{\mathfrak{sl}}
\newcommand{\spf}{\sp(4,\RR)}
\newcommand{\vol}{\operatorname{vol}}
\newcommand{\BB}{\mathbb{B}}
\newcommand{\so}{\mathfrak{so}}
\newcommand{\Eto}{\EE^{2,1}}
\newcommand{\Rto}{\RR^{2,1}}
\newcommand{\Hom}{\operatorname{Hom}}
\newcommand{\Sim}{\operatorname{Sim}}
\newcommand{\Fix}{\operatorname{Fix}}
\newcommand{\aff}{\operatorname{aff}}
\newcommand{\Id}{\operatorname{I}} 
\newcommand{\II}{\operatorname{I}}  
\newcommand{\JJ}{\operatorname{J}}
\newcommand{\UU}{\operatorname{U}}
\newcommand{\Lag}{\operatorname{Lag}}
\newcommand{\Mas}{\operatorname{Maslov}}
\newcommand{\Mat}{\operatorname{Mat}_2(\RR)}
\newcommand{\Ww}{\mathcal{W}}
\newcommand{\Ee}{\mathcal{E}}
\newcommand{\Ss}{\mathcal{S}}
\renewcommand{\aa}{\mathfrak{a}}  
\newcommand{\hh}{\mathfrak{h}}
\renewcommand{\ggg}{\mathfrak{g}}
\newcommand{\ppp}{\mathfrak{p}}
\newcommand{\uu}{\mathfrak{u}}
\title[The $(2+1)$-Einstein universe]
{A primer on the $(2+1)$ Einstein universe}
\author[ Barbot, Charette, Drumm, Goldman, and Melnick ]{
Thierry Barbot, Virginie Charette, Todd Drumm,
William M. Goldman, and Karin Melnick
\thanks{Goldman is grateful to 
the Erwin Schrodinger Institute (Vienna) for hospitality during the writing of
this paper, and to National Science Foundation  grant 
DMS-0405605. Charette is grateful for support from NSERC.}}
\begin{document}

\begin{abstract}
The Einstein universe is the conformal compactification of Minkowski
space. It also arises as the ideal boundary of anti-de Sitter space.
The purpose of this article is to develop the synthetic geometry of
the Einstein universe in terms of its homogeneous submanifolds and
causal structure, with particular emphasis on dimension $2 + 1$, in
which there is a rich interplay with symplectic geometry.
\end{abstract}

\begin{classification}
Primary 53-XX; Secondary 83-XX.
\end{classification}

\begin{keywords}
Minkowski space, spacetime, Lorentzian manifold, conformal structure,
Lie algebra, symplectic vector space
\end{keywords}

\maketitle
\tableofcontents
\section{Introduction}

We will explore the geometry of the conformal compactification of
Minkowski $(n+1)$--space inside of $\RR^{n,2}$. We shall call this
conformal compactification $\Einno$, or the {\em Einstein universe},
and its universal cover will be denoted $\widetilde{\Ein}^{n,1}$.  The
Einstein universe is a homogeneous space $G/P$, where
$G=\operatorname{PO}(n,2)$, and $P$ is a parabolic subgroup.  When
$n=3$, then $G$ is locally isomorphic to $\operatorname{Sp}(4, \RR)$.

The origin of the terminology ``Einstein universe'' is that A.\
Einstein himself considered as a paradigmatic universe the product
$S^{3} \times \RR$ endowed with the Lorentz metric $ds_{0}^{2} -
dt^{2}$, where $ds_0^2$ is the usual constant curvature Riemannian
metric on $S^3$.  The conformal transformations preserve the class of
lightlike geodesics and provide a more flexible geometry than that
given by the metric tensor.

Our motivation is to understand conformally flat
Lorentz manifolds and the Lorentzian analog of Kleinian groups. Such
manifolds are locally homogeneous geometric structures modeled on $\EinTO$.
  
The Einstein universe $\Einno$ is the {\em conformal compactification\/} of Minkowski space $\Eno$ in the
same sense that the $n$-sphere
\begin{equation*}
S^n = \EE^n \cup \{\infty\} 
\end{equation*}
conformally compactifies {\em Euclidean space $\EE^n$\/}; in particular, a Lorentzian analog of the following theorem holds (see \cite{Frances}): 

\begin{theorem}[Liouville's theorem] Suppose $n\geq 3$. Then every
conformal map $U\xrightarrow{f}\EE^n$ defined on a nonempty connected
subdomain $U \subset\EE^n$ extends to a conformal automorphism $\bar{f}$
of $S^n$. Furthermore $\bar{f}$ lies in the group $\POh(n+1,1)$
generated by inversions in hyperspheres and Euclidean isometries.
\end{theorem}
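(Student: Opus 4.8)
The plan is to reduce the statement to an explicit determination of the conformal factor. Since the group $\POh(n+1,1)$ generated by inversions and Euclidean isometries already acts by conformal automorphisms of $S^n$ (in the projective null-cone model of the conformal sphere), and each such automorphism restricts to a conformal self-map of $\EE^n = S^n\setminus\{\infty\}$ away from its poles, it suffices to show that an arbitrary conformal $f\colon U\to\EE^n$ agrees on $U$ with the restriction of some element of this group; uniqueness of the extension $\bar f$ is then automatic by connectedness (indeed by analytic continuation). I would first record that a conformal map is automatically smooth, in fact real-analytic, by elliptic regularity, so that we may differentiate freely. Writing the conformal condition as $(Df)^{T}(Df)=\lambda\,\Id$ with $\lambda=e^{2\varphi}>0$, the columns $\partial_1 f,\dots,\partial_n f$ form an orthogonal frame of constant length $e^{\varphi}$.

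The heart of the argument is a differential identity for $\varphi$. Differentiating the relations $\langle\partial_i f,\partial_j f\rangle=e^{2\varphi}\delta_{ij}$ and using that the $\partial_k f$ span $\RR^n$, I would express the second derivatives $\partial_i\partial_j f$ as explicit combinations of the frame with coefficients linear in the first derivatives of $\varphi$ — the analogue of reading off Christoffel symbols. Imposing the integrability condition that the third derivatives $\partial_l\partial_i\partial_j f$ be symmetric in their lower indices then yields a system of second-order PDEs for $\varphi$. This is exactly where the hypothesis $n\geq 3$ is indispensable: only when three distinct indices are available do the cross-terms decouple enough to force the clean conclusion. Setting $\psi:=e^{-\varphi}=\lambda^{-1/2}$ linearizes the a priori nonlinear equation into $\partial_i\partial_j\psi=\alpha\,\delta_{ij}$ for a scalar function $\alpha$; a further trace computation shows $\alpha$ is constant, so $\psi$ is a quadratic polynomial of the special shape
$$\psi(x)=a\,|x|^2+\langle b,x\rangle+c,\qquad a\in\RR,\ b\in\RR^n,\ c\in\RR.$$

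With $\lambda=\psi^{-2}$ known explicitly, I would finish by matching it against the conformal factors of the model maps. Inversions contribute factors $|x-p|^{-4}$ (so $\psi=|x-p|^2$, the case $a\neq 0$) and similarities contribute constant factors (the case $a=b=0$), the intermediate cases being handled by composing with a translation and a homothety. Thus, after post-composing $f$ with a suitable element $h\in\POh(n+1,1)$, the map $h\circ f$ has \emph{constant} conformal factor. For such a map the relation $Df=\mu\,\Theta$ with $\mu$ constant and $\Theta(x)\in\Oh(n)$ can be differentiated directly to show $\Theta$ is locally constant, whence $Df$ is constant and $h\circ f$ is affine — that is, a Euclidean similarity. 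Therefore $f=h^{-1}\circ(\text{similarity})$ lies in $\POh(n+1,1)$ and extends, as claimed, to a conformal automorphism $\bar f$ of $S^n$.

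The main obstacle is the integrability computation at the center of the proof: organizing the third-derivative identities and extracting from them the linearized equation $\partial_i\partial_j\psi=\alpha\,\delta_{ij}$ together with the constancy of $\alpha$. This is precisely the step that collapses in dimension $2$ — where conformal maps are merely holomorphic or antiholomorphic and form an infinite-dimensional family — so any correct proof must use $n\geq 3$ exactly here, through the presence of a third independent coordinate direction. By comparison, the final identification of the resulting Möbius map as an element of $\POh(n+1,1)$ and the verification that it extends to $S^n$ are formal consequences of the projective model of the conformal sphere.
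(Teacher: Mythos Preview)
The paper does not actually prove Liouville's theorem: it is quoted in the introduction as a classical background result, with a reference to Frances for a proof, and no argument is given in the text. So there is no ``paper's own proof'' to compare against.

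That said, your outline is the standard differential proof (going back essentially to Liouville and refined by Nevanlinna and others), and the strategy is sound: from the conformality relation $(Df)^{\dag}(Df)=e^{2\varphi}\Id$ one derives, via the symmetry of third partials, the system $\partial_i\partial_j(e^{-\varphi})=\alpha\,\delta_{ij}$ with $\alpha$ constant, forcing the conformal factor to be the reciprocal square of a quadratic of the stated special form; matching against the conformal factors of similarities and inversions then pins $f$ down as a M\"obius transformation. Your identification of the step where $n\geq 3$ enters (the need for three distinct indices in the integrability analysis) is exactly right.

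One point deserves more care than the single phrase ``by elliptic regularity'': the argument as written requires $f\in C^3$ (to make sense of the third-derivative symmetry), and bootstrapping a merely $C^1$ conformal map up to $C^3$ is not entirely trivial. The cleanest route is to observe that a $C^1$ conformal map pulls back harmonic functions to harmonic functions, so the coordinate functions of $f$ satisfy an elliptic system and are real-analytic; alternatively one can invoke the regularity theory for $1$-quasiconformal maps. If your statement is taken with the classical hypothesis $f\in C^3$ (or $C^4$), the issue disappears. Either way, this is a known technicality rather than a genuine gap in the approach.
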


Our viewpoint involves various geometric objects in Einstein
space: points are organized into $1$-dimensional submanifolds which we
call {\em photons,\/} as they are lightlike geodesics. Photons in turn
form various subvarieties, such as lightcones and hyperspheres.  For
example, a lightcone is the union of all photons through a given
point.  Hyperspheres fall into two types, depending on the signature
of the induced conformal metric. {\em Einstein hyperspheres\/} are
Lorentzian, and are models of $\Ein^{n-1,1}$, while {\em spacelike
hyperspheres\/} are models of $S^n$ with conformal Euclidean geometry.

The Einstein universe $\Einno$ can be constructed by projectivizing
the nullcone in the inner product space $\RR^{n+1,2}$ defined by a
symmetric bilinear form of type $(n+1,2)$.  Thus the points of
$\Einno$ are {\em null lines\/} in $\RR^{n+1,2}$, and photons correspond
to isotropic $2$-planes. Linear hyperplanes $H$ in $\RR^{n+1,2}$
determine lightcones, Einstein hyperspheres, and spacelike
hyperspheres, respectively, depending on whether the restriction of
the bilinear form to $H$ is degenerate, type $(n,2)$, or
Lorentzian, respectively.

Section~\ref{sec.causal} discusses {\em causality\/} in Einstein space.  
Section~\ref{sec.symplectic} is specific to dimension 3,
where the conformal Lorentz group is locally isomorphic to the group
of linear symplectomorphisms of $\RR^4$. This establishes a close
relationship between the symplectic geometry of $\RR^4$ (and hence the
contact geometry of $\RP^3$) and the conformal Lorentzian geometry of $\EinTO$.  
Section~\ref{sec.liealgebra} reinterprets these synthetic
geometries in terms of the structure theory of Lie algebras.  
Section~\ref{sec.dynamic} discusses the dynamical theory of discrete subgroups
of $\EinTO$ due to Frances~\cite{Frances2}, 
and begun by Kulkarni~\cite{Kulkarni}.  
Section~\ref{sec.crooked} discusses the
{\em crooked planes\/}, discovered by Drumm~\cite{drumm1}, in the
context of $\EinTO$; their closures, called {\em crooked surfaces\/}
are studied and shown to be Klein bottles invariant under the
Cartan subgroup of $\SOh(3,2)$.  The paper concludes with
a brief description of discrete groups of conformal transformations
and some open questions.

Much of this work was motivated by the thesis of Charles
Frances~\cite{Frances}, which contains many constructions and
examples, his paper~\cite{Frances2} on Lorentzian Kleinian groups, and
his note~\cite{Frances} on compactifying crooked planes.  We are
grateful to Charles Frances and Anna Wienhard for many useful
discussions.

We are also grateful to the many institutions where we have been able
to meet to discuss the mathematics in this paper. In particular, we
are grateful for the hospitality provided by the Banff International
Research Station~\cite{BIRS} where all of us were able to meet for a 
workshop in November 2004, the workshop in Oostende, Belgium in May 2005 on
``Discrete groups and geometric structures,'' the miniconference in
Lorentzian geometry at the E.N.S.\ Lyon in July 2005, the special
semester at the Newton Institute in Cambridge in Fall 2005, the
special semester at the Erwin Schr\"odinger Institute in Fall 2005,
and a seminar at the University of Maryland in summer 2006, when
the writing began.

\section{Synthetic geometry of $\Einno$}

In this section we develop the basic synthetic geometry of {\em
Einstein space,\/} or the {\em Einstein universe}, starting with the
geometry of {\em Minkowski space\/} $\Eno$.

\subsection{Lorentzian vector spaces}
\label{sub.lorentzien}
\index{Lorentzian vector space}
We consider {\em real inner product spaces,\/} that is, vector spaces $V$
over $\RR$ with a nondegenerate symmetric bilinear form $\langle,\rangle$.
A nonsingular symmetric $n\times n$-matrix $B$ 
defines a symmetric bilinear form on $\RR^n$ by the rule:
\begin{equation*}
\langle u,v \rangle_B := u^\dag B v.
\end{equation*}
where $u^\dag$ denotes the transpose of the vector $u$.
We shall denote by $\RR^{p,q}$ a real inner product space whose inner product is of type $(p,q)$. For example, if
\begin{equation*}
u = \bmatrix u_1 \\ \vdots \\ u_p \\ u_{p+1} \\ \vdots \\  u_{p+q} \endbmatrix,
v = \bmatrix v_1 \\ \vdots \\ v_p \\ v_{p+1} \\ \vdots \\  v_{p+q} \endbmatrix,
\end{equation*}
then
\begin{equation*}
\langle u,v \rangle :=
u_1 v_1 + \dots + u_p v_p - u_{p+1} v_{p+1} -  \dots - u_{p+q} v_{p+q}
\end{equation*}
defines a type $(p,q)$ inner product, induced by the matrix $\Id_p \oplus - \Id_q$ on $\RR^{p+q}$. 
The group of linear automorphisms of $\RR^{p,q}$ is $\Oh(p,q)$.

If $B$ is positive definite---that is, $q=0$---then we say that the
inner product space $(V,\langle,\rangle)$ is {\em Euclidean.\/}
If $q=1$, then $(V,\langle,\rangle)$ is {\em Lorentzian.\/}
We may omit reference to the bilinear form if it is clear from context.

If $V$ is Lorentzian, and $v\in V$, then $v$ is:
\begin{itemize}
\item {\em timelike\/}  if $\langle v,v\rangle < 0$;\index{timelike!vector}
\item {\em lightlike\/} (or {\em null\/} or {\em isotropic\/})
if $\langle v,v\rangle = 0$;\index{lightlike!vector}
\item {\em causal\/} if $\langle v,v\rangle \leq 0$;\index{causal!vector}
\item {\em spacelike\/}  if $\langle v,v\rangle > 0$.\index{spacelike!vector}
\end{itemize}
The {\em nullcone\/} $\NN(V)$ in $V$ consists of all null vectors.
\index{nullcone}

If $W\subset V$, then define its {\em orthogonal complement:\/}
\begin{equation*}
W^\perp := \{ v \in V \mid \langle v,w\rangle = 0 \ \forall \ w \in W \}.
\end{equation*}
The hyperplane $v^\perp$ is {\em null\/} (respectively, {\em timelike\/},
{\em spacelike\/}) if $v$ is null (respectively spacelike, timelike).

In the sequel, according to the object of study, we will consider
several symmetric $n\times n$-matrices and the associated type $(p,q)$
symmetric bilinear forms. For different bilinear forms, different
subgroups of $\Oh(p,q)$ are more apparent. For example:

\begin{itemize}
\item 
Using the diagonal matrix 
\begin{equation*}
\Id_p \oplus - \Id_q
\end{equation*}
invariance under the maximal compact subgroup
\begin{equation*}
\Oh(p)\times \Oh(q)  \subset \Oh(p,q)
\end{equation*}
is more apparent.

\item 
Under the bilinear form defined by the matrix
\begin{equation*}
\Id_{p-q} \oplus \bigoplus^q -1/2 \cdot \bmatrix 0 & 1 \\ 1 & 0 \endbmatrix 
\end{equation*}
(if $p\ge q$),
invariance under the {\em Cartan subgroup\/}
\begin{equation*}
\{\Id_{p-q}\} \times \prod^{q} \Oh(1,1)
\end{equation*}
is more apparent.

\item 
Another bilinear form which we use in the last two sections is:
\begin{equation*}
\Id_{p-1} \oplus - \Id_{q-1} \oplus -1/2 \cdot \bmatrix 0 & 1 \\ 1 & 0 \endbmatrix 
\end{equation*}
which is useful in extending subgroups of $\Oh(p-1,q-1)$ to $\Oh(p,q)$.
\end{itemize}

\subsection{Minkowski space}
\label{sub.Minkowski space}
\index{Minkowski space}
{\em Euclidean space\/} $\EE^n$ is the model space for {\em Euclidean
geometry,\/} and can be characterized up to isometry as a simply connected,
geodesically complete, flat Riemannian manifold. 
For us, it will be simpler to describe it as an {\em
affine space\/} whose underlying vector space of translations is a
Euclidean inner product space $\RR^n$. That means $\EE^n$ comes equipped
with a simply transitive vector space of translations
\begin{equation*}
p \mapsto p + v 
\end{equation*}
where $p\in \EE^n$ is a point and $v\in \RR^n$ is a vector
representing a parallel displacement. Under this simply transitive
$\RR^n$-action, each tangent space $T_p(\EE^n)$ naturally identifies
with the vector space $\RR^n$. The Euclidean inner product on $\RR^n$
defines a positive definite symmetric bilinear form on each tangent
space---that is, a {\em Riemannian metric.\/}

{\em Minkowski space\/} $\Eno$ is the Lorentzian analog. It is
characterized up to isometry as a simply connected, geodesically
complete, flat Lorentzian manifold. Equivalently, it is an affine
space whose underlying vector space of translations is $\RR^{n,1}$.

The geodesics in $\Eno$ are paths of the form
\begin{align*}
\RR &\xrightarrow{\gamma} \Eno \\
t & \longmapsto  p_0 + t v
\end{align*}
where $p_0 \in\Eno$ is a point and $v\in\RR^{n,1}$ is a vector.  A path $\gamma$ as above is {\em timelike, lightlike, or spacelike,\/} if the {\em velocity\/} $v$ is timelike, lightlike, or spacelike, respectively.

Let $p\in\Eno$. The {\em affine lightcone\/} $L^{\aff}(p)$ at $p$
is defined as the union of all lightlike geodesics through $p$:
\begin{equation*}
L^{\aff}(p) := \{ p + v \in\Eno \mid \langle v,v\rangle = 0\} .
\end{equation*}
Equivalently $L^{\aff}(p)= p + \NN$ where $\NN\subset\RR^{n,1}$ denotes
the nullcone in $\RR^{n,1}$. The hypersurface $L^{\aff}(p)$ is ruled
by lightlike geodesics; it is singular only at $\{ p \}$.  The
Lorentz form on $\Eno$ restricts to a degenerate metric on
$L^{\aff}(p) \setminus \{ p \}$.
\index{affine lightcone}

A lightlike geodesic $\ell\subset\Eno$ lies in a unique null affine hyperplane.
(We denote this $\ell^\perp$, slightly abusing notation.)
That is, writing $\ell = p + \RR v$, where $v\in\Rno$ is a lightlike vector, the null hyperplane $p + v^\perp$ is independent of the choices
of $p$ and $v$ used to define $\ell$.

The \emph{de Sitter hypersphere} of radius $r$ centered at $p$ is defined as
\begin{equation*}
S_r(p) := \{ p + v \in\Eno \mid \langle v,v\rangle = r^2\} .
\end{equation*}
The Lorentz metric on $\Eno$ restricts to a Lorentz metric on $S_r(p)$
having constant sectional curvature $1/r^2$. It is geodesically
complete and homeomorphic to $S^{n-1}\times\RR$.  It is a model for
{\em de Sitter space\/} $\dS^{n-1,1}$.  

As in Euclidean space, a {\em homothety (centered at $x_0$) \/} is any map conjugate by a translation to scalar multiplication:
\begin{align*}
\Eno & \longrightarrow  \Eno \\
  x       & \longmapsto    x_0 + r (x - x_0) .
\end{align*}
A {\em Minkowski similarity transformation\/} is a composition of
\index{similarity transformation!Minkowski}
an isometry of $\Eno$ with a homothety: 
\begin{equation*}
f: x \longmapsto   r A(x) + b .
\end{equation*}
where $A\in\Oh(n,1), r > 0$ and $b\in \RR^{n,1}$ defines a translation.
Denote the group of similarity transformations of $\Eno$ by
$\Sim(\Eno)$.
\subsection{Einstein space}
\label{sub.defeinstein}

{\em Einstein space\/} $\Einno$ is the projectivized nullcone of $\Rnt$. 
The nullcone is
\begin{equation*}
\Nnt := \{ v\in\Rnt \mid  \langle v,v\rangle = 0 \} 
\end{equation*}
and the {\em $(n+1)$-dimensional Einstein universe\/} 
$\Einno$ is the image
of $\Nnt -\{0\}$ under projectivization:
\begin{equation*}
\Rnt -\{0\} \xrightarrow{\Pr} \RR\Pr^{n+2}.
\end{equation*}
In the sequel, for notational convenience, we will denote $\Pr$ as a map from
$\Rnt$, implicitly assuming that the origin $0$ is removed from any
subset of $\Rnt$ on which we apply $\Pr$.

The {\em double covering\/} $\widehat{\Ein}^{n,1}$ is defined as the
quotient of the nullcone $\Nnt$ by the action by {\em positive scalar
multiplications.\/} For many purposes the double covering may be more
useful than $\Einno$, itself.  We will also consider the universal
covering $\uuEinno$ in \S\ref{sec.causal}.

Writing the bilinear form on $\Rnt$ as $\Id_{n+1} \oplus -\Id_2$, that is,
\begin{equation*}
\langle v,v\rangle = v_1^2 + \dots + v_{n+1}^2 - v_{n+2}^2 - v_{n+3}^2,
\end{equation*}
the nullcone is defined by 
\begin{equation*}
v_1^2 + \dots + v_{n+1}^2 = v_{n+2}^2  + v_{n+3}^2 .
\end{equation*}
This common value is always nonnegative, and if it is zero, then
$v=0$ and $v$ does not correspond to a point in $\Einno$. Dividing by
the positive number $\sqrt{v_{n+2}^2  + v_{n+3}^2}$ we may assume
that 
\begin{equation*}
v_1^2 + \dots + v_{n+1}^2 = v_{n+2}^2  + v_{n+3}^2 = 1
\end{equation*}
which describes the product $S^n\times S^1$. Thus
\begin{equation*}
\uEinno \approx S^n \times S^1.
\end{equation*}
Scalar multiplication by $-1$ acts by the antipodal map on both the
$S^n$ and the $S^1$-factor. On the $S^1$-factor the antipodal map is
a translation of order two, so the quotient
\begin{equation*}
\Einno = \uEinno / \{\pm 1\} 
\end{equation*}
is homeomorphic to the mapping torus of the antipodal map on $S^n$. 
When $n$ is even, 
$\Einno$ is nonorientable and $\uEinno$ is an orientable double covering.
If $n$ is odd, then $\Einno$ is orientable.


The objects in the synthetic geometry of $\Einno$ 
are the following collections of points in $\Einno$:
\begin{itemize}
\item {\em Photons} are projectivizations of totally isotropic $2$-planes. 
We denote the space of photons by $\operatorname{Pho}^{n,1}$.  \index{photon} 
A photon enjoys the natural structure of a {\em real projective line}: each photon $\phi\in\operatorname{Pho}^{n,1}$
admits {\em projective parametrizations}, which are diffeomorphisms of $\phi$ with $\RP^1$ such that if $g$ is an automorphism of $\Einno$ preserving $\phi$, 
then $g|_\phi$ corresponds to a projective transformation of $\RP^1$.
The projective parametrizations are unique up to post-composition with transformations in $\PGL(2,\RR)$.

\item {\em Lightcones} are singular hypersurfaces. 
Given any point $p\in\Einno$, 
the {\em lightcone $L(p)$ with vertex $p$\/} 
is the union of all photons containing $p$:
\begin{equation*}
L(p) := \bigcup \{ \phi\in\operatorname{Pho}^{n,1} \mid p \in \phi \}.
\end{equation*}
The lightcone $L(p)$ can be equivalently defined as 
the projectivization of the orthogonal complement $p^{\perp}\cap \Nnt$.  
The only singular point on $L(p)$ is $p$, 
and $L(p)\setminus \{p\}$ is homeomorphic to $S^{n-1}\times \RR$.
\index{lightcone}
\item
The {\em Minkowski patch\/} $\Min(p)$ determined by an element $p$ of $\Einno$
is the complement of $L(p)$ and has the natural structure of 
Minkowski space $\Eno$, as will be explained in \S\ref{sec.confcomp} below. 
In the double cover, a point $\hat{p}$ determines {\em two\/} Minkowski
patches:
\begin{align*}
\Min^+(\hat{p}) & := 
\{\hat{q}\in \uEinno \mid 
\langle p, q \rangle > 0 \ \forall p,q \in \RR^{n+1,2} 
\text{~representing~} \hat{p}, \hat{q}
\} \\
\Min^-(\hat{p}) & := 
\{\hat{q}\in \uEinno \mid 
\langle p, q \rangle < 0 \ \forall p,q \in \RR^{n+1,2} \text{~representing~} \hat{p}, \hat{q}
\} .
\end{align*}

\index{Minkowski patch}
\index{Minkowski patch!positive}
\index{Minkowski patch!negative}
\item There are two different types of {\em hyperspheres}.
\begin{itemize}

\item {\em Einstein hyperspheres} are closures in $\Einno$ of de
Sitter hyperspheres $S_{r}(p)$ in Minkowski patches as defined in
\S\ref{sub.Minkowski space}. Equivalently, they are projectivizations of $v^{\perp} \cap \Nnt$  
for spacelike vectors $v$.  \index{Einstein!hypersphere}

\item {\em Spacelike hyperspheres} are one-point compactifications of
spacelike hyperplanes like $\RR^n$ in a Minkowski patch $\RR^{n,1}
\subset \Einno$.  Equivalently, they are projectivizations of
$v^{\perp} \cap \Nnt$ for timelike vectors $v$.
\index{spacelike!hypersphere}
\end{itemize}

\item
An {\em anti-de Sitter space\/} $\AdS^{n,1}$ is one component of the
complement of an Einstein hypersphere $\Ein^{n-1,1}\subset\Einno$. It
is homeomorphic to $S^1 \times \RR^n$. Its {\em ideal boundary\/}
is $\Ein^{n-1,1}$. 
\end{itemize}
\index{anti-de Sitter space}

\subsection{$2$-dimensional case}
Because of its special significance, we discuss in detail the geometry
of the $2$-dimensional Einstein universe $\EinOO$.
\begin{itemize}
\item
$\EinOO$ is diffeomorphic to a 2-torus.
\item
Each lightcone $L(p)$ consists of two photons which intersect at $p$.
\item
$\EinOO$ has two foliations $F_-$ and $F_+$ by photons, and the lightcone
$L(p)$ is the union of the leaves through $p$ of the respective foliations.
\item
The leaf space of each foliation naturally identifies with $\RP^1$, and
the mapping
\begin{equation*}
\EinOO \longrightarrow \RP^1 \times \RP^1
\end{equation*}
is equivariant with respect to the isomorphism
\begin{equation*}
\Oh(2,2) \xrightarrow{\cong} \PGL(2,\RR)\times \PGL(2,\RR).
\end{equation*}
\end{itemize}
Here is a useful model (compare Pratoussevitch~\cite{Pratoussevitch}):
The space $\Mat$ of $2\times 2$ real matrices 
with the bilinear form associated to the determinant gives an isomorphism
of inner product spaces:
\begin{align*}
\Mat & \longrightarrow \RR^{2,2} \\
\bmatrix m_{11} & m_{12} \\ m_{21} & m_{22} \endbmatrix 
& \longmapsto 
\bmatrix m_{11} \\ m_{12} \\ m_{21} \\  m_{22} \endbmatrix 
\end{align*}
where $\RR^{2,2}$ is given the bilinear form defined by
\begin{equation*}
\frac12 \bmatrix 
0 & 0 & 0 & 1 \\
0 & 0 & -1 & 0 \\0 & -1 & 0 & 0 \\
1 & 0 & 0 & 0 \endbmatrix .
\end{equation*}
The group $\GL(2,\RR)\times\GL(2,\RR)$ acts on $\Mat$ by:
\begin{equation*}
X \xrightarrow{(A,B)} A X B^{-1} 
\end{equation*}
and induces a local isomorphism
\begin{equation*}
\SL_{\pm}(2,\RR) \times \SL_{\pm}(2,\RR) \longrightarrow \Oh(2,2)
\end{equation*}
where 
\begin{equation*}
\SL_{\pm}(2,\RR) := \{ A\in \GL(2,\RR) \mid \det(A) = \pm 1\}. 
\end{equation*}

Here we will briefly introduce \emph{stems}, which are pieces of
crooked planes, as will be discussed in \S\ref{sec.crooked}
below.  Let $p_0,p_\infty\in\EinOO$ be two points not contained in a
common photon.  Their lightcones intersect in two points $p_1$ and
$p_2$, and the union
\begin{equation*}
L(p_0)\cup L(p_\infty) \subset \EinOO
\end{equation*}
comprises four photons
intersecting in the four points $p_0,p_\infty,p_1,p_2$, such that each
point lies on two photons and each photon contains two of these
points.  This \emph{stem configuration} \index{stem configuration} of
four points and four photons can be represented schematically as in
Figure~\ref{fig.configuration} below.
\begin{figure}[ht]
\begin{center}
\includegraphics[width=3cm, height=3cm]{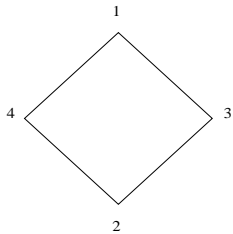}
\end{center}
\caption{Stem Configuration} \label{fig.configuration}
\end{figure}


The complement
\begin{equation*}
\EinOO \setminus \big( L(p_0)\cup L(p_\infty) \big)
\end{equation*}
consists of four quadrilateral regions (see Figure~\ref{fig:einoo}).
In \S\ref{sec:crooked} the union $S$ of two non-adjacent quadrilateral
regions will be studied; this is the {\em stem\/} of a crooked
surface. Such a set is bounded by the four photons 
of $L(p_0) \cup L(p_\infty)$.

\begin{figure}[ht]
\begin{center}
\includegraphics[width=2.5cm, height=2.5cm]{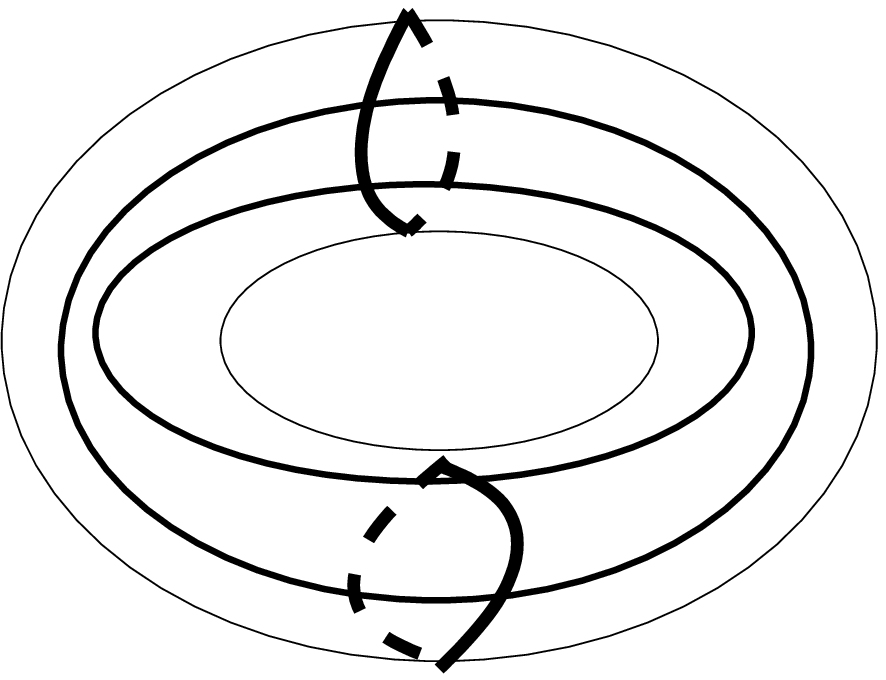}
\end{center}
\caption{Two lightcones in $\EinOO$} \label{fig:einoo}
\end{figure}


\subsection{$3$-dimensional case} \label{sec:threedim}
Here we present several observations particular to the 
case of $\EinTO$.

\begin{itemize}

\item We will see that $\Pho$ identifies naturally
with a $3$-dimensional real projective space (\S\ref{sec:contact}).

\item A lightcone in $\EinTO$ is homeomorphic to a
pinched torus.

\item  
Suppose $p \neq q$. Define
\begin{equation*}
C(p,q) := L(p) \cap L(q) .
\end{equation*}
If $p$ and $q$ are {\em incident,\/}---that is, they lie on a common photon---then $C(p,q)$ is the unique photon containing them.  Otherwise $C(p,q)$ is a submanifold that we will call a {\em spacelike circle.\/}
Spacelike circles are projectivized nullcones of linear subspaces of
$\RR^{3,2}$ of type $(2,1)$. The closure of a spacelike
geodesic in $\Eto$ is a spacelike circle.  \index{spacelike!circle}

\item  A {\em timelike circle\/} is the projectivized nullcone of 
a linear subspace of $\RR^{3,2}$ of metric type $(1,2)$.

\index{timelike!circle}

\item 
Einstein hyperspheres in $\EinTO$ are copies of $\EinOO$.  In addition to their two rulings by photons, they have a foliation by spacelike circles.

\item
Lightcones may intersect Einstein hyperspheres in two different ways.
These correspond to intersections of degenerate linear hyperplanes in 
$\RR^{3,2}$ with linear hyperplanes of type $(2,2)$.  Let $u,v\in\RR^{3,2}$ be vectors such that
$u^\perp$ is degenerate, so $u$ determines a lightcone $L$, and $v^\perp$ has type $(2,2)$, so $v$ defines the Einstein hypersphere $H$.
In terms of inner products,
\begin{equation*}
\langle u,u\rangle = 0, \  
\langle v,v\rangle > 0 .
\end{equation*}
If $\langle u,v\rangle \neq 0$, then $u,v$ span a nondegenerate subspace
of signature $(1,1)$. In that case $L\cap H$ is a spacelike circle.
If $\langle u,v\rangle = 0$, then $u,v$ span a degenerate subspace
and the intersection is a lightcone in $H$, which is a union of two
distinct but incident photons.

\item
Similarly, lightcones intersect spacelike hyperspheres in two different ways.
The generic intersection is a spacelike circle, and the non-generic
intersection is a single point, such as the intersection of $L(0)$ with 
the spacelike plane $z = 0$ in $\Rto$.

\item 
A {\em pointed photon\/} is a pair $(p,\phi)\in\EinTO\times\Pho$ such
that $p\in\phi$. Such a pair naturally extends to a triple
\begin{equation*}
p \in \phi \subset L(p)
\end{equation*}
which corresponds to an {\em isotropic flag,\/} that is, a linear
filtration of $\RR^{3,2}$
\begin{equation*}
0 \subset \ell_p \subset P_\phi \subset (\ell_p)^\perp \subset \RR^{3,2},
\end{equation*}
where $\ell_p$ is the $1$-dimensional linear subspace corresponding
to $p$; $P_\phi$ is the $2$-dimensional isotropic subspace corresponding
to $\phi$; and $(\ell_p)^\perp$ is the orthogonal subspace of $\ell_p$. These objects
form a homogeneous space, an incidence variety, denoted $\Flag$, of $\Oh(3,2)$, which fibers both over $\EinTO$ and $\Pho$.
The fiber of the fibration $\Flag\longrightarrow\EinTO$ over a point $p$
is the collection of all photons through $p$. 
The fiber of the fibration $\Flag\longrightarrow\Pho$ over a photon $\phi$
identifies with all the points of $\phi$. Both fibrations are circle bundles.
\index{pointed photon}
\index{flag!isotropic}
\end{itemize}

\section{$\Einno$ as the conformal compactification of $\Eno$}
\label{sec.confcomp}

Now we shall describe the geometry of $\Einno$ as the compactification
of Minkowski space $\Eno$. We begin with the Euclidean analog.

\subsection{The conformal Riemannian sphere}
\label{sub.conformalsphere}

The standard conformal compactification of Euclidean space $\EE^n$ is
topologically the one-point compactification, the $n$-dimensional
sphere.  The {\em conformal Riemannian sphere\/} $S^{n}$ is the
projectivization $\Pr(\Nno)$ of the nullcone of $\RR^{n+1,1}$.   

For $U\subset S^{n}$ an arbitrary open set, any local section
\begin{equation*}
U \xrightarrow{\sigma} \RR^{n+1,1} \setminus \{ 0 \}
\end{equation*}
of the restriction of the projectivization map to $U$ determines a pullback of the Lorentz metric 
on $\EE^{n+1,1}$ to a Riemannian metric $g_{\sigma}$ on $U$.
This metric depends on $\sigma$, but its conformal class is independent
of $\sigma$. 
Every section is $\sigma' = f\sigma$ for some non-vanishing function $f: U
\to \RR$.  Then
\begin{equation*}
g_{\sigma'} = f^{2}g_{\sigma}
\end{equation*}
so the pullbacks are conformally equivalent.
Hence the metrics $g_{\sigma}$ altogether define a canonical conformal
structure on $S^{n}$.

The orthogonal group $\Oh(n+1,1)$ leaves invariant the nullcone
$\Nno\subset\RR^{n+1,1}$.  The projectivization 
\begin{equation*}
S^{n} = \Pr(\Nno) 
\end{equation*} 
is invariant under the {\em projective orthogonal group\/} $\POh(n+1,1)$,
which is its conformal automorphism group.

Let 
\begin{equation*}
S^{n} \xrightarrow{\sigma} \Nno \subseteq \RR^{n+1,1} \setminus \{ 0 \} 
\end{equation*}
be the section taking
values in the unit Euclidean sphere. Then the metric $g_{\sigma}$ is
the usual $\Oh(n+1)$-invariant spherical metric. 

Euclidean space $\EE^n$ embeds in $S^n$
via a {\em spherical paraboloid\/} in the nullcone $\NN^{n+1,1}$.  Namely
consider the quadratic form on $\RR^{n+1,1}$ defined by

\begin{equation*}
\Id_n \oplus - 1/2 \cdot \bmatrix 0 & 1 \\ 1 & 0 \endbmatrix =
\begin{bmatrix} 
\Id_n &   & \\
      & 0 & - 1/2 \\
      & - 1/2 & 0 \end{bmatrix} .
\end{equation*}
The map
\begin{align}\label{eq:paraboloid}
\EE^n & \rightarrow \NN^{n+1,1}\subset \RR^{n+1,1} \nonumber \\
x     & \longmapsto \bmatrix x \\ \langle x,x\rangle \\ 1 \endbmatrix
\end{align}
composed with  projection $\NN^{n+1,1} \xrightarrow{\Pr} S^{n}$ is an
embedding $\mathcal E$ of $\EE^{n}$ into $S^{n}$, which is {\em conformal.\/}


The Euclidean similarity transformation
\index{similarity transformation!Euclidean}
\begin{equation*}
f_{r,A,b}: x \longmapsto r A x + b 
\end{equation*}
where $r\in \RR_+$, $A\in \Oh(n)$, and $b\in \RR^n$, is represented by
\begin{equation}\label{eq:similaritymatrix}
F_{r,A,b} :=
\begin{bmatrix} 
\Id_n & 0 & b \\
 2b^\dag  & 1 & \langle b,b\rangle \\
0  & 0 & 1 \end{bmatrix}
\cdot 
\begin{bmatrix} 
A & 0 & 0 \\
0 & r & 0 \\
0  & 0 & r^{-1} \end{bmatrix} \;
\in\;  \Oh(n+1,1) .
\end{equation}

That is, for every $x\in\EE^n$, 
\begin{equation*}
F_{r,A,b} \Ee(x) = \Ee\big(f_{r,A,b}(x)\big) .
\end{equation*}
{\em Inversion in the unit sphere \/}
$\langle v,v \rangle = 1$ of $\EE^n$ is represented by the element 
\begin{equation*}
\Id_n \oplus \bmatrix 0 & 1 \\ 1 & 0 \endbmatrix
\end{equation*}
which acts on $\EE^n\setminus\{0\}$ by:
\begin{equation*}
\iota: x \mapsto \frac1{\langle x,x\rangle} x .
\end{equation*}
The origin is mapped to the point (called $\infty$) 
having homogeneous coordinates
\begin{equation*}
\bmatrix 0_n \\ 1 \\ 0 \endbmatrix 
\end{equation*}
where $0_n\in \RR^n$ is the zero vector.
\index{conformal inversion!Euclidean}

The map $\Ee^{-1}$ is a coordinate chart on the open set
\begin{equation*}
\EE^n = S^n \setminus \{\infty\} 
\end{equation*} 
and $\Ee^{-1}\circ\iota$ is a coordinate chart on the open set
$(\EE^n \cup\{\infty\}) \setminus \{0\} = S^n \setminus\{0\}$. 

\subsection{The conformal Lorentzian quadric}
\label{sub.confeinstein}
Consider now the inner product space $\RR^{n+1,2}$. Here it will
be convenient to use the inner product
\begin{align*}
\langle u,v \rangle  & :=  u_1 v_1 + \ldots + u_n v_n  - u_{n+1}v_{n+1}
-  \frac12 u_{n+2}v_{n+3} -  \frac12 u_{n+3}v_{n+2}  \\
& = u^\dag  \bigg(\Id_n \oplus -\Id_1 
\oplus - 1/2 \cdot \bmatrix 0 & 1 \\ 1 & 0 \endbmatrix\bigg) v .
\end{align*}
In analogy with the Riemannian case, consider the embedding 
$\Ee: \Eno \rightarrow \Einno$ via 
a {\em hyperbolic paraboloid\/} defined by \eqref{eq:paraboloid} as above, 
where the Lorentzian inner product on $\Eno$ is defined by 
$Q = \Id_n \oplus -\Id_1$.  
The procedure used previously in the Riemannian case
naturally defines an $\Oh(n+1,2)$-invariant conformal Lorentzian
structure on $\Einno$, and the embedding we have just defined is
conformal.

{\em Minkowski similarities\/} $f_{r,A,b}$ map into $\Oh(n+1,2)$ as in
the formula \eqref{eq:similaritymatrix}, where 
$r \in \RR_+; A \in \Oh(n+1,1); b \in \RR^{n,1}$; $\langle,\rangle$
is the Lorentzian inner product on $\RR^{n,1}$; and $2b^\dag$ is replaced by $2b^\dag Q$.


The conformal compactification of Euclidean space is the one-point
compactification; the compactification of Minkowski space, however, is
more complicated, requiring the addition of more than a single
point. 
Let $p_0 \in \Einno$ denote the {\em origin,\/} corresponding to
\begin{equation*}
\bmatrix 0_{n+1} \\ 0 \\ 1 \endbmatrix .
\end{equation*}
To see what lies {\em at infinity,\/}
consider the {\em Lorentzian inversion in the unit sphere\/}
\index{conformal inversion!Lorentzian}
defined by the matrix $\Id_{n+1}\oplus \bmatrix 0 & 1 \\ 1 & 0 \endbmatrix$,
which is given on $\Eno$ by the formula
\begin{equation}\label{eq:LorentzianInversion}
\iota: x \longmapsto \frac1{\langle x,x\rangle} x .
\end{equation}
Here the whole affine lightcone $L^{\aff}(p_0)$ is thrown to 
infinity. We distinguish the points on $\iota(L^{\aff}(p_0))$:
\begin{itemize} 
\item The {\em improper point\/} $p_\infty$ is the image $\iota(p_0)$.
It is represented in homogeneous coordinates by
\begin{equation*}
\bmatrix 0_{n+1} \\ 1 \\ 0 \endbmatrix .
\end{equation*}
\item The {\em generic point\/} on 
$\iota\big(L^{\aff}(p_0)\big)$ has homogeneous coordinates
\begin{equation*}
\bmatrix  v \\ 1 \\ 0 \endbmatrix
\end{equation*}
where $0\neq v\in \RR^{n,1}$; it equals $\iota\big(\Ee(v)\big)$.
\end{itemize}
\index{improper point}

We have described all the points in  
\begin{equation*}
\Eno \cup \iota(\Eno)
\end{equation*}
which are the points defined by vectors $v\in \RR^{n+1,2}$ with
coordinates $v_{n+2} \neq 0$ or $v_{n+3} \neq 0$. It remains to 
consider points having homogeneous coordinates
\begin{equation*}
\bmatrix v \\ 0 \\ 0 \endbmatrix
\end{equation*}
where necessarily $\langle v, v \rangle =0$. 
This equation describes the nullcone in $\RR^{n,1}$; its projectivization is a spacelike
sphere $S_\infty$, which we call the {\em ideal sphere.\/} 
When $n=2$, we call this the {\em ideal circle} and its elements \emph{ideal points}.
Each ideal point is the endpoint of a unique null geodesic from the origin; the union of that null
geodesic with the ideal point is a photon through the origin. Every
photon through the origin arises in this way.  
The ideal sphere is fixed by the inversion $\iota$. 

\index{ideal sphere}

The union of the ideal sphere $S_\infty$ with $\iota(L^{\aff}(p_0))$
is the lightcone $L(p_\infty)$ of the improper point.  Photons in $L(p_\infty)$ are called \emph{ideal photons.}
Minkowski space $\Eno$ is thus the complement of a lightcone 
$L(p_\infty)$ in $\Einno$. This fact motivated the earlier definition of
a Minkowski patch $\Min(p)$ as the complement in $\Einno$
of a lightcone $L(p)$.

Changing a Lorentzian metric by a non-constant scalar factor modifies timelike and spacelike geodesics, but not images of null
geodesics (see for example \cite{beem}, p.~307). 
Hence the notion of (non-parametrized) null geodesic is
well-defined in a conformal Lorentzian manifold.
For $\Einno$, the null geodesics are photons.

\subsection{Involutions}
\label{sub.conformalinv}
When $n$ is even, involutions in 
$\SOh(n+1,2) \cong \operatorname{PO}(n+1,2)$ 
correspond to nondegenerate splittings of $\RR^{n+1,2}$.
For any involution in
$\POh(3,2)$, the fixed point set in $\EinTO$ must be one of the following:
\begin{itemize}
\item the empty set $\emptyset$;
\item a  spacelike hypersphere;
\item a timelike circle;
\item the union of a spacelike circle with two points;
\item an Einstein hypersphere.
\end{itemize}
In
the case that $\operatorname{Fix}(f)$ is disconnected and equals
\begin{equation*}
\{ p_1, p_2\} \cup S 
\end{equation*}
where $p_1,p_2\in\EinTO$, and $S\subset \EinTO$ is a spacelike circle,
then 
\begin{equation*}
S = L(p_1) \cap L(p_2) .
\end{equation*}
Conversely, given any two non-incident points $p_1,p_2$, there is a 
unique involution fixing $p_1,p_2$ and the spacelike circle
$L(p_1) \cap L(p_2)$.


\subsubsection{Inverting photons}
Let $p_\infty$ be the improper point, as above.  A photon in $\EinTO$
either lies on the ideal lightcone $L(p_\infty)$, or it intersects the
spacelike plane $S_0$ consisting of all
\begin{equation*}
p =  \bmatrix x \\ y \\ z \endbmatrix
\end{equation*}
for which $z=0$. Suppose $\phi$ is a photon intersecting $S_0$ 
in the point $p_0$
with polar coordinates
\begin{equation*}
p_0 = \bmatrix r_0 \cos(\psi) \\ r_0 \sin(\psi) \\ 0 \endbmatrix \in
S_0\subset \EE^{2,1} .
\end{equation*}
Let $v_0$ be the null vector
\begin{equation*}
v_0 = \bmatrix \cos(\theta) \\ \sin(\theta) \\ 1 \endbmatrix 
\end{equation*}
and consider the parametrized lightlike geodesic
\begin{equation*}
\phi(t) :=  p_0 + t v_0
\end{equation*}
for $t\in\RR$. Then inversion $\iota$ maps $\phi(t)$ to
\begin{equation*}
(\iota\circ\phi)(t) = \iota(p_0) + \tilde{t} 
\bmatrix -\cos(\theta - 2\psi) \\
\sin(\theta - 2\psi) \\ 1 \endbmatrix
\end{equation*}
where
\begin{equation*}
\tilde{t} := \frac{t}{r_0^2 + 2 r_0 \cos(\theta-\psi) t} ~.
\end{equation*}
Observe that $\iota$ leaves invariant the spacelike plane $S_0$
and acts by Euclidean inversion on that plane.

\subsubsection{Extending planes in $\Eto$ to $\EinTO$}
\begin{itemize}
\item
The closure of a null plane $P$ in $\EE^{2,1}$ is a lightcone
and its frontier $\bar{P} \setminus P$ is an ideal photon.  
Conversely 
a lightcone with vertex on the ideal circle $S_\infty$
is the closure of a null plane containing $p_0$, while a lightcone with vertex on 
\begin{equation*}
L(p_\infty) \;\setminus\; (S_\infty \cup \{p_\infty\}) 
\end{equation*}
is the closure of a null plane not containing $p_0$.

\item
The closure of a spacelike plane in $\EE^{2,1}$ is
a spacelike sphere and its frontier is the improper point $p_\infty$.

\item
The closure of a timelike plane in $\EE^{2,1}$ is
an Einstein hypersphere and its frontier is a union of two ideal photons
(which intersect in $p_\infty$).

\item
The closure of a timelike (respectively spacelike) 
geodesic in $\EE^{2,1}$ is a timelike (respectively spacelike) 
circle containing $p_\infty$, and $p_\infty$ is its frontier.
\end{itemize}

Consider the inversion on the lightcone of $p_0$: 
\begin{equation*}
\iota \left( \begin{bmatrix} 
t \sin\theta \\ t\cos\theta\\t \\ 0\\1
\end{bmatrix} \right) 
= \begin{bmatrix} 
t \sin\theta \\ t\cos\theta\\t \\ 1\\0
\end{bmatrix} .
\end{equation*}
The entire image of the light cone 
$L(p_0)$ lies outside the Minkowski patch $\EE^{2,1}$.

Let us now look at the image of a timelike line in $\EE^{2,1}$
under the inversion. For example,
$$
\iota \left( \begin{bmatrix} 0\\0\\t\\-t^2\\1 \end{bmatrix} \right)=
 \begin{bmatrix} 0\\0\\t\\1\\-t^2 \end{bmatrix}
\sim \begin{bmatrix} 0\\0\\-1/t\\-1/t^2\\1 \end{bmatrix} =
\begin{bmatrix} 0\\0\\s\\-s^2\\1 \end{bmatrix}
$$
where $s=-1/t$. That is, the inversion maps the timelike line
minus the origin to itself, albeit with a change in the
parametrization.

\section{Causal geometry}
\label{sec.causal}
In \S\ref{sub.confeinstein} we observed that $\Einno$ is
naturally equipped with a conformal structure.  This structure lifts to the double cover $\uEinno$.  As in the Riemannian case in \S\ref{sub.conformalsphere},
a global representative of the conformal structure on $\uEinno$ is the pullback by
a global section $\sigma: \uEinno \to \RR^{n+1,2}$ of the ambient
quadratic form of $\RR^{n+1,2}$. 
%
The section $\sigma:
\widehat{\Ein}^{n,1} \to \RR^{n+1,2}$ taking values in the set where
\begin{equation*}
v_1^2 + \dots + v_{n+1}^2 = v_{n+2}^2  + v_{n+3}^2 = 1
\end{equation*}
exhibits a homeomorphism $\uEinno \cong S^n \times S^1$ as in \S\ref{sub.defeinstein}; it is now apparent that
$\widehat{\Ein}^{n,1}$ is conformally equivalent to $S^{n} \times S^{1}$ endowed with the Lorentz metric $ds_{0}^{2} -
d\theta^{2}$, where $ds_{0}^{2}$ and $d\theta^{2}$ are the usual round
metrics on the spheres $S^{n}$ and $S^{1}$ of radius one.

In the following, elements of $S^{n} \times S^{1}$ are denoted by $(\varphi, \theta)$.
In these coordinates, we distinguish the timelike vector field
$\eta = \partial_{\theta}$ tangent to the fibers $\{ \ast \} \times S^{1}$.

\subsection{Time orientation}
\index{time orientation} First consider Minkowski space
$\EE^{n,1}$ with underlying vector space $\RR^{n,1}$ equipped with the
inner product:
\begin{equation*}
\langle u,v \rangle  
:=
u_1 v_1 + \dots + u_n v_n - u_{n+1} v_{n+1} .
\end{equation*}
A vector $u$ in $\RR^{n,1}$ is \emph{causal} if $u_{n+1}^{2} \geq u_{1}^{2} +
\ldots + u_{n}^{2}$. It is {\em future-oriented}
(respectively {\em past-oriented}) if the coordinate $u_{n+1}$ is
positive (respectively negative);  
\index{future!-oriented vector}
\index{past!-oriented vector}
equivalently, $u$ is future-oriented if its inner product with
\begin{equation*}
\eta_{0} = \bmatrix 0 \\ \vdots \\ 0 \\ 1 \endbmatrix
\end{equation*}
is negative.

The key point is that the choice of the coordinate
$u_{n+1}$---equivalently, of an everywhere timelike vector field like
$\eta_{0}$---defines a decomposition of every affine lightcone
$L^{\aff}(p)$ in three parts:

\begin{itemize}
\item $\{ p \}$;
\item The future lightcone $L_{+}^{\aff}(p)$ of elements $p + v$ where
$v$ is a future-oriented null vector;
\index{future!lightcone}
\item The past lightcone $L_{-}^{\aff}(p)$ of elements $p + v$ where
$v$ is a past-oriented null vector.
\index{past!lightcone}
\end{itemize}

The above choice is equivalent to a continuous choice of one of the
connected components of the set of timelike vectors based at each $x
\in \EE^{n,1}$; timelike vectors in these components are designated future-oriented.  In other words, $\eta_{0}$ defines a
\emph{time orientation} on $\EE^{n,1}$.

To import this notion to $\uEinno$, replace
$\eta_{0}$ by the vector field $\eta$ on $\uEinno$. Then a causal
tangent vector $v$ to $\uEinno$ is {\em
future-oriented}\index{future!-oriented tangent vector} (respectively {\em
past-oriented})\index{past!-oriented tangent vector} if the inner product
$\langle v , \eta \rangle$ is negative (respectively positive).

We already observed in \S\ref{sub.defeinstein} that the antipodal map
is $(\varphi, \theta) \mapsto (-\varphi, -\theta)$ on $S^{n} \times
S^{1}$; in particular, it preserves the timelike vector field $\eta$,
which then descends to a well-defined vector field on $\Einno$, so that
$\Einno$ is time oriented, for all integers $n$.

\begin{remark}
The Einstein universe does not have
a preferred Lorentz metric in its conformal class.  The definition
above is nonetheless valid since it involves only signs of inner
products and hence is independent of the choice of metric in
the conformal class.
\end{remark}

The group $\operatorname{O}(n+1,2)$ has four connected components.
\index{orientation}
More precisely, let $\SOh(n+1,2)$ be the subgroup of
$\operatorname{O}(n+1,2)$ formed by elements with determinant $1$; these are the orientation-preserving conformal
transformations of $\uEin^{n,1}$. 
Let $\operatorname{O}^{+}(n+1,2)$ be the subgroup comprising the 
elements preserving the
time orientation of $\uEin^{n,1}$. 
The identity component of $\operatorname{O}(n+1,2)$ is the intersection
\begin{equation*}
\operatorname{SO}^{+}(n+1,2) = \SOh(n+1,2) \cap
\operatorname{O}^{+}(n+1,2) .
\end{equation*}
Moreover, $\SOh(n+1,2)$ and $\operatorname{O}^{+}(n+1,2)$ each have two
connected components.

The center of $\operatorname{O}(n+1,2)$ 
has order two and is generated by the antipodal map, which belongs
to $\SOh(n+1,2)$ if and only if $n$ is odd. 
Hence the center of
$\operatorname{SO}(n+1,2)$ is trivial if $n$ is even---in particular,
when $n=2$. On the other hand, the antipodal map always preserves the
time orientation.

The antipodal map is the only element of
$\operatorname{O}(n+1,2)$ acting trivially on $\Ein^{n,1}$. Hence the
group of conformal transformations of $\Ein^{n,1}$ is
$\operatorname{PO}(n+1,2)$, the quotient of $\operatorname{O}(n+1,2)$
by its center.  When $n$ is even, $\operatorname{PO}(n+1,2)$ is
isomorphic to $\operatorname{SO}(n+1,2)$.

\subsection{Future and past}
\label{sub.futur}

A $C^{1}$-immersion 
\begin{equation*}
[0,1] \xrightarrow{c} \EE^{1,n} 
\end{equation*}
is {\em a causal curve\/}\index{causal!curve} 
(respectively {\em a timelike curve}\index{timelike!curve}) 
if the tangent vectors $c'(t)$ are all causal (respectively timelike). 
This notion extends to any conformally
Lorentzian space---in particular, to $\Einno$, $\uEinno$, or
$\uuEinno$.  Furthermore, a causal curve $c$ is
future-oriented (respectively past-oriented) if all the tangent
vectors $c'(t)$ are future-oriented (respectively past-oriented).
\index{future!-oriented causal curve}
\index{past!-oriented causal curve}

Let $A$ be a subset of $\EE^{n,1}$, $\Einno$, $\uEinno,$ or
$\uuEinno$.  The {\em future}\index{future!of a point} $\II^{+}(A)$ (respectively
the {\em past}\index{past!of a point} $\II^{-}(A)$) of $A$ is the set comprising
endpoints $c(1)$ of future-oriented (respectively past-oriented)
timelike curves with starting point $c(0)$ in $A$. The {\em causal
future}\index{future!causal, of a point} $\JJ^{+}(A)$ (respectively the {\em
causal past}\index{past!causal, of a point} $\JJ^{-}(A)$) of $A$ is the set
comprising endpoints $c(1)$ of future-oriented (respectively
past-oriented) causal curves 
with starting point $c(0)$ in $A$.
Two points $p$, $p'$ are {\em causally related\/}
\index{causally related} 
if one belongs to the causal future of the other: $p' \in J^{\pm}(p)$.  The notion of future and past in $\EE^{n,1}$
is quite easy to understand: $p'$ belongs to the future $\II^{+}(p)$ of $p$
if and only if $p'-p$ is a future-oriented timelike element of $\RR^{n,1}$.

Thanks to the conformal model, these notions are also quite easy to 
understand in $\Einno$, $\uEinno,$ or $\uuEinno$: let $d_{n}$ be the spherical distance on the homogeneous Riemannian sphere $S^{n}$ of radius $1$.
The universal covering $\uuEinno$ is conformally isometric to the Riemannian
product $S^{n} \times \RR$ where the real line $\RR$ is endowed with
the negative quadratic form $-d\theta^{2}$.
Hence, the image of any causal, $C^{1}$, immersed curve in
$\uuEinno \approx S^{n} \times \RR$ is the graph of a map $f: I \to
S^{n}$ where $I$ is an interval in $\RR$ and where $f$ is {\em
$1$-Lipschitz \/}---that is, for all $\theta$, $\theta'$ in $\RR$:

\begin{equation*}
d_{n}(f(\theta), f(\theta')) \leq | \theta - \theta' | .
\end{equation*}
Moreover, the causal curve is timelike if and only if the map $f$ is
{\em contracting}---that is, satisfies
\begin{equation*}
d_{n}(f(\theta), f(\theta')) < | \theta - \theta' | .
\end{equation*}
It follows that the future of an element $(\varphi_{0}, \theta_{0})$ of
$\uuEinno \approx S^{n} \times \RR$ is:
\begin{equation*}
 \II^{+}(\varphi_{0}, \theta_{0}) = \{ (\varphi, \theta) \ \mid \
 \theta - \theta_{0} > d_{n}(\varphi, \varphi_{0}) \}
\end{equation*}
and the causal future $\JJ^{+}(p)$ of an element $p$ of
$\uuEinno$ is the closure of the future $\II^{+}(p)$:

\begin{equation*}
 \JJ^{+}(\varphi_{0}, \theta_{0}) = \{ (\varphi, \theta) \ \mid \
 \theta - \theta_{0} \geq d_{n}(\varphi, \varphi_{0}) \} .
\end{equation*}

As a corollary, the future $\II^{+}(A)$ of a nonempty subset $A$ of
$\Einno$ or $\uEinno$ is the entire spacetime. In other words, the notion of past or
future is relevant in $\uuEinno$, but not in $\Einno$ or
$\uEinno$. 

There is, however, a relative
notion of past and future still relevant in $\uEinno$ that will be
useful later when considering crooked planes and surfaces: let $\hat{p}$, $\hat{p}'$ be two elements of $\uEinno$ such that $\hat{p}' \neq \pm \hat{p}$.
First observe that the intersection $\Min^{+}(\hat{p}) \cap \Min^{+}(\hat{p}')$
is never empty. Let $p_\infty$ be any element of this intersection, so $\Min^{+}(\hat{p}_{\infty})$ contains $\hat{p}$ and $\hat{p}'$. 
The time orientation on $\uEinno$ induces a time orientation on such a Minkowski patch $\Min^{+}(\hat{p}_{\infty})$.

\begin{fact}
The points $\hat{p}'$ and $\hat{p}$ are causally related in $\Min^{+}(\hat{p}_{\infty})$ 
if and only if, for any lifts $p,p'$ of $\hat{p}, \hat{p}'$, respectively, to $\RR^{n+1,2}$, 
the inner product $\langle p, p' \rangle$ is positive. 
\index{Minkowski patch!positive}
\end{fact}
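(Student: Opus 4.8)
The plan is to reduce the statement to a single computation inside the paraboloid model of a Minkowski patch. Since the connected conformal group acts transitively on $\uEinno$ and preserves both the conformal class and the time orientation defined by $\gh$, I may assume that $\hat p_\infty$ is the standard improper point, and I fix the lift $p_\infty$ so that the paraboloid embedding $\Ee$ of \S\ref{sec.confcomp} lifts to a map $\EE^{n,1}\to\uEinno$ whose image is exactly $\Min^{+}(\hat p_\infty)$; a one-line evaluation of $\langle p_\infty,\Ee(x)\rangle$ determines which sign of the section lands in $\Min^{+}$ rather than $\Min^{-}$. Under this identification $\Min^{+}(\hat p_\infty)$ becomes affine Minkowski space $\EE^{n,1}$, the restricted time orientation is the standard one, and I write $\hat p=\Ee(x)$, $\hat p'=\Ee(y)$ with $x,y\in\EE^{n,1}$.

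The heart of the argument is the inner-product identity
\[
\langle p, p'\rangle = -\frac12\,\langle x-y,\,x-y\rangle ,
\]
valid for the lifts $p,p'$ given by the (consistently signed) paraboloid section. This follows by expanding against the form on $\Rnt$: the hyperbolic block contributes $-\frac12(\langle x,x\rangle+\langle y,y\rangle)$ and the $\EE^{n,1}$-block contributes $\langle x,y\rangle$. The crucial structural point is that in the double cover $\uEinno$ each point has a lift that is unique up to a \emph{positive} scalar, so the \emph{sign} of $\langle p,p'\rangle$ depends only on $\hat p$ and $\hat p'$; this is precisely why the statement is phrased in $\uEinno$ and fails to make sense in $\Einno$, where a lift is only defined up to an arbitrary nonzero scalar.

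Reading off the identity finishes the argument. In $\EE^{n,1}\cong\Min^{+}(\hat p_\infty)$ two points are causally related exactly when their difference $x-y$ is causal, i.e. $\langle x-y,x-y\rangle\le 0$, and chronologically (timelike) related exactly when $\langle x-y,x-y\rangle<0$; by the displayed identity these translate into $\langle p,p'\rangle\ge 0$ and $\langle p,p'\rangle>0$, respectively. Under the standing hypothesis $\hat p'\neq\pm\hat p$, the lifts $p,p'$ are linearly independent null vectors, and $\langle p,p'\rangle=0$ holds precisely when they span a totally isotropic plane, i.e. when $\hat p$ and $\hat p'$ are incident (lie on a common photon)---the lightlike-separated boundary case. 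Thus positivity of $\langle p,p'\rangle$ is exactly the timelike relation, and captures the causal relation away from the photon-incidence locus.

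I expect the only real obstacle to be the sign bookkeeping in the double cover: fixing the lift of $\hat p_\infty$ so that $\hat p$ and $\hat p'$ genuinely lie in $\Min^{+}(\hat p_\infty)$ (using $\hat p_\infty\in\Min^{+}(\hat p)\cap\Min^{+}(\hat p')$ together with the symmetry of $\langle\cdot,\cdot\rangle$), and checking that the $\gh$-time orientation restricts to the affine one on the patch. Neither affects the symmetric relation ``causally related,'' but both must be pinned down for the right-hand inequality to be unambiguous and for the equivalence to hold on the nose.
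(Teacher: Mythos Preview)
The paper states this as a Fact without proof, so there is no argument to compare against; your paraboloid computation is the natural and correct way to establish it. Your identity $\langle \Ee(x),\Ee(y)\rangle=-\tfrac12\langle x-y,x-y\rangle$ is exactly right for the form $\Id_n\oplus -\Id_1\oplus(-\tfrac12)\bigl(\begin{smallmatrix}0&1\\1&0\end{smallmatrix}\bigr)$ used in \S\ref{sub.confeinstein}, and the sign bookkeeping you flag (choosing the lift of $\hat p_\infty$ so that $\Ee$ lands in $\Min^{+}$ rather than $\Min^{-}$) is the only genuine subtlety; with the standard representative $(0_{n+1},1,0)$ one gets $\langle p_\infty,\Ee(x)\rangle=-\tfrac12$, so one must take the antipodal lift, exactly as you anticipate.

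Your observation that strict positivity $\langle p,p'\rangle>0$ captures \emph{timelike} separation, while the null boundary case $\langle p,p'\rangle=0$ corresponds to $\hat p,\hat p'$ lying on a common photon, is a worthwhile refinement: under the paper's own definition of ``causally related'' via $\JJ^{\pm}$, lightlike-separated pairs are causally related but give inner product zero. The paper handles this by immediately adopting, as a \emph{convention}, that ``causally related in $\uEinno$'' shall mean $\langle p,p'\rangle>0$; so the Fact is best read as motivating that convention rather than as a literal equivalence with the $\JJ^{\pm}$ definition. Your analysis makes this precise.
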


Hence, if $\hat{p}$ and $\hat{p}'$ are causally
related in some Minkowski patch, then they are causally related in any
Minkowski patch containing both of them. Therefore, 
(slightly abusing language)
we use the following convention: two
elements $\hat{p}$, $\hat{p}'$ of $\uEinno$ are {\em causally
related}\index{causally related} if the inner product $\langle p, p'
\rangle$ in $\RR^{n+1,2}$ is positive for any lifts $p,p'$.

\subsection{Geometry of the universal covering}
The geometrical understanding of the embedding of Minkowski space in
the Einstein universe can be a challenge. In particular,
the closure in $\Einno$ of a subset of a Minkowski patch may be not
obvious, as we will see for crooked planes. This difficulty arises
from the nontrivial topology of $\Einno$.

On the other hand, the topology of the universal covering $\uuEinno$
is easy to visualize; indeed, the map 
\begin{align*}
\uuEinno \approx S^{n} \times \RR & \xrightarrow{\mathcal S} 
\RR^{n+1} \setminus \{ 0 \} \\
\mathcal{S} : (\varphi, \theta) &\longmapsto \exp(\theta)\varphi
\end{align*}
is an embedding.  Therefore, $\uuEinno$ can be considered as a subset of
$\RR^{n+1}$---one that is particularly easy to visualize when 
$n = 2$. Observe that the map $\mathcal S$ is 
$\Oh(n+1)$-equivariant for the natural actions on $\uuEinno$ and $\RR^{n+1}$.

The antipodal map 
\begin{equation*}
(\varphi, \theta) \longmapsto (-\varphi, -\theta) 
\end{equation*}
lifts to the automorphism $\alpha$ of 
\begin{equation*}
\uuEinno \approx S^{n} \times \RR, 
\end{equation*}
defined by 
\begin{equation*}
(\varphi, \theta) \stackrel{\alpha}\longmapsto (-\varphi, \theta + \pi). 
\end{equation*}
In the coordinates $\uuEinno \approx \RR^{n+1} \setminus \{ 0 \}$ this lifting
$\alpha$ is expressed by $x \to -\lambda{x}$,
where $\lambda = \exp(\pi)$.

Since null geodesics in $\Einno$ are photons, the images by $\mathcal S$ of null geodesics 
of $\uuEinno$ are curves in
$\RR^{n+1} \setminus \{ 0 \}$ characterized by the following properties:
\begin{itemize}
\item{They are contained in $2$-dimensional linear subspaces;}
\item{Each is a logarithmic spiral in the $2$-plane containing it.}
\end{itemize}

Hence, for $n=2$, the lightcone of an element $p$ of $\uuEinTO$ (that is, the union of the null geodesics
containing $p$) is a singular surface of revolution in $\RR^{3}$ obtained by rotating a spiral contained in a vertical $2$-plane around an axis of the plane.
In particular, for every $x$ in $\uuEinno \approx \RR^{n+1} \setminus \{ 0 \}$, every null geodesic
containing $x$ contains $\alpha(x) = -\lambda{x}$.  The image $\alpha(x) = -\lambda{x}$ is uniquely characterized by the following properties, so that it can be called the {\em first future-conjugate point} to $x$:
\index{future!-conjugate point}
\begin{itemize}
\item{It belongs to the causal future $J^{+}(x)$;}
\item{For any $y \in J^+(x)$ such that $y$ belongs to all null geodesics containing $x$, we have $\alpha(x) \in J^-(y).$}
\end{itemize}

All these considerations allow us to visualize how Minkowski patches embed
in $\RR^{n+1} \setminus \{ 0 \}$ (see Figure~\ref{fig.spiral}): 
let $\tilde{p}\in\uuEinno$ and $\hat{p}$ be its projection to $\uEinno$. The 
Minkowski patch $\Min^{+}(\hat{p})$ is the projection in $\uEinno$ of 
$I^{+}(\tilde{p}) \setminus J^{+}(\alpha(\tilde{p}))$, which can also
be defined as $I^{+}(\tilde{p}) \cap I^{-}(\alpha^{2}(\tilde{p}))$. 
The projection in $\uEinno$ of 
$$\uuEinno \setminus (J^{+}(\tilde{p}) \cup J^{-}(\tilde{p}))$$
is the Minkowski patch $\Min^{-}(\hat{p})$, which is the set of points non-causally related to $\hat{p}$.

\begin{figure}[ht]
\hspace{1in} { \input{sspiral.pstex_t} }
\caption{\label{fig.spiral}
   \textit{A Minkowski patch in $\uuEin^{1,1}$}} 
\end{figure}

\subsection{Improper points of Minkowski patches}
\label{sec:improperpatch}
We previously defined the improper point $p_{\infty}$ associated to a
Minkowski patch in $\Einno$: it is the unique point such that the
Minkowski patch is $\Min(p_{\infty})$.

In the double-covering $\uEinno$, to every Minkowski patch are attached two {\em improper points}:
\begin{itemize}
\item{the {\em spatial improper point}, the unique element $p_{\infty}^{\operatorname{sp}}$ such that the given Minkowski patch is $\Min^{-}(p_{\infty}^{\operatorname{sp}})$;
\index{improper point!spatial} }
\item{ the {\em timelike improper point}, the unique element $p_{\infty}^{\operatorname{ti}}$ such that the given Minkowski patch is $\Min^{+}(p_{\infty}^{\operatorname{ti}})$.
\index{improper point!timelike} }
\end{itemize}

Let $\Min^{+}(p_{\infty}^{\operatorname{ti}}) =
\Min^{-}(p_{\infty}^{\operatorname{sp}})$ be a Minkowski patch in
$\uEinno$. Let 
\begin{equation*}
\RR \xrightarrow{\gamma} \Min^{+}(p_{\infty}^{\operatorname{ti}}) 
\approx \EE^{n,1}  
\end{equation*}
be a geodesic. Denote by $\Gamma$ the image of $\gamma$, and by
$\bar{\Gamma}$ the closure in $\uEinno$ of $\Gamma$.
\begin{itemize}
\item If $\gamma$ is spacelike, then 
\begin{equation*}
\bar{\Gamma} = \Gamma \cup \{ p_{\infty}^{\operatorname{sp}} \} .
\end{equation*}
\item If $\gamma$ is timelike, then 
\begin{equation*}
\bar{\Gamma} = \Gamma \cup \{ p_{\infty}^{\operatorname{ti}} \} .
\end{equation*}
\item If $\gamma$ is lightlike, then 
$\bar{\Gamma}$ is a photon avoiding 
$p_{\infty}^{\operatorname{sp}}$ and $p_{\infty}^{\operatorname{ti}}$.
\end{itemize}



\section{Four-dimensional real symplectic vector spaces}
\label{sec.symplectic}
In spatial dimension $n= 2$, Einstein space $\EinTO$ admits an alternate
description as the {\em Lagrangian Grassmannian,\/} the manifold $\Lag(V)$ of
Lagrangian $2$-planes in a real symplectic vector space $V$ of dimension
$4$. There results a  
\index{Lagrangian!Grassmannian}
kind of
duality between the conformal Lorentzian geometry
of $\EinTO$ and the symplectic geometry of $\RR^4$. Photons correspond
to linear pencils of Lagrangian $2$-planes (that is, families of Lagrangian
subspaces passing through a given line). The corresponding local isomorphism
\begin{equation*}
\Sp(4,\RR) \longrightarrow  \Oh(3,2) 
\end{equation*}
manifests the isomorphism of root systems of type $B_2$ (the odd-dimensional
orthogonal Lie algebras) and $C_2$ (the symplectic Lie algebras)
of rank $2$.  We present this correspondence below.
\index{symplectic!vector space}

\subsection{The inner product on the second exterior power}

Begin with a four-dimensional vector space $V$ over $\RR$ and choose
a fixed generator 
\begin{equation*}
\vol \in \Lambda^4(V) .
\end{equation*}
The group of automorphisms of $(V,\vol)$ is the special linear group
$\SL(V)$. 

The second exterior power $\Lambda^2(V)$ has dimension $6$. The action of
$\SL(V)$ on $V$ induces an action on 
$\Lambda^2(V)$  which preserves the bilinear form
\begin{equation*}
\Lambda^2(V) \times \Lambda^2(V) \xrightarrow{\BB} \RR 
\end{equation*}
defined by:
\begin{equation*}
\alpha_1 \wedge \alpha_2 =  -\BB(\alpha_1,\alpha_2) \vol .
\end{equation*}

This bilinear form satisfies the following properties:
\begin{itemize}
\item $\BB$ is symmetric;
\item $\BB$ is nondegenerate;
\item $\BB$ is split---that is, of type $(3,3)$.
\end{itemize}
(That $\BB$ is split follows from the fact that any orientation-reversing
linear automorphism of $V$ maps $\BB$ to its negative.)


The resulting homomorphism
\begin{equation}\label{eq:secondexteriorpower}
\SL(4,\RR) \longrightarrow \SOh(3,3) 
\end{equation}
is a local isomorphism of Lie groups, with kernel $\{\pm \Id\}$ and image the identity component of $\SOh(3,3)$.


Consider a symplectic form $\omega$ on $V$---that is, a skew-symmetric
nondegenerate bilinear form on $V$.  Since $\BB$ is nondegenerate,
$\omega$ defines a dual exterior bivector $\omega^*\in\Lambda^2(V)$ by
\begin{equation*}
\omega(v_1,v_2) = \BB(v_1\wedge v_2, \omega^*) .
\end{equation*}
We will assume that
\begin{equation}\label{eq:normalizeDualBivector}
\omega^* \wedge \omega^* = 2 \vol .
\end{equation}
Thus $\BB(\omega^*,\omega^*) = -2 < 0$, so that its symplectic complement
\begin{equation*}
W_0
:= (\omega^*)^\perp\subset \Lambda^2(V) 
\end{equation*}
is an inner product space of type $(3,2)$.
Now the local isomorphism
\eqref{eq:secondexteriorpower} restricts to a local isomorphism
\begin{equation}\label{eq:secondexteriorpowersymplectic}
\Sp(4,\RR) \longrightarrow \SOh(3,2) 
\end{equation}
with kernel $\{\pm\Id_4\}$ and image the identity component of $\SOh(3,2)$.  


\subsection{Lagrangian subspaces and the Einstein universe}
\index{Lagrangian!plane}
Let $V$, $\omega$, $\BB$, $\omega^*,$ and 
$W_0$ be as above. 
The projectivization of the null cone in $W_0$ is equivalent to $\EinTO$.
Points in $\EinTO$ correspond to Lagrangian planes in $V$---that is,
$2$-dimensional linear subspaces $P\subset V$ such that the
restriction $\omega|_P \equiv 0$. Explicitly, if $v_1,v_2$ constitute
a basis for $P$, then the line generated by the bivector 
\begin{equation*}
w = v_1\wedge v_2\in\Lambda^2(V) 
\end{equation*}
is independent of the choice of basis for $P$. 
Furthermore, $w$ is null with respect to $\BB$ and orthogonal
to $\omega^*$, so $w$ generates a null line in $W_0 \cong \RR^{3,2}$,
and hence defines a point in $\EinTO$.  
 
For the reverse correspondence, first note that a point of $\EinTO \cong \Pr(\NN(W_0))$ is
represented by a vector $a \in W_0$ such that $a \wedge a = 0$.
Elements $a \in \Lambda^2 V$ with $a\wedge a = 0$ are exactly the
decomposable ones---that is, those that can be written 
$a = v_1\wedge v_2$ for $v_1, v_2 \in V$.  
Then the condition $a \perp \omega^*$ is
equivalent by construction to $\omega(v_1,v_2)=0$, so $a$ represents a
Lagrangian plane, $\mbox{span}\{ v_1,v_2 \}$, in $V$.  Thus Lagrangian $2$-planes in $V$ correspond to isotropic lines in $W_0 \cong \RR^{3,2}$.   

For a point $q \in \EinTO$, denote by $L_q$ the corresponding Lagrangian plane in $V$.

\subsubsection{Complete flags}
A photon $\phi$ in $\EinTO$ corresponds to a line $\ell_\phi$ in $V$, where 
$$ \ell_\phi = \bigcap_{p \in \phi} L_p .$$
A pointed photon $(p,\phi)$, as defined in \S\ref{sec:threedim}, corresponds to a pair of linear subspaces
\begin{equation}\label{eq:linearinclusion}
\ell_\phi \subset L_p
\end{equation}
where $\ell_\phi\subset V$ is the line corresponding to $\phi$ and 
where $L_p\subset V$ is the Lagrangian plane of corresponding to $p$. Recall that the incidence relation $p\in\phi$ extends to
\begin{equation*}
p \in \phi \subset L(p), 
\end{equation*}
corresponding to the complete linear flag
\begin{equation*}
0 \subset \ell_p \subset P_\phi \subset (\ell_p)^\perp \subset W_0  
\end{equation*}
where $P_\phi$ is the null plane projectivizing to $\phi$.
The linear inclusion \eqref{eq:linearinclusion} extends to a linear flag
\begin{equation*}
0 \subset \ell_\phi \subset L_p \subset (\ell_\phi)^\perp \subset V
\end{equation*}
where now $(\ell_\phi)^\perp$ denotes the symplectic orthogonal of $\ell_\phi$.
Clearly the lightcone $L(p)$ corresponds to the linear
hyperplane $(\ell_\phi)^\perp\subset V$.

\subsubsection{Pairs of Lagrangian planes}
\label{sub.sub.lagrangianpair}
Distinct Lagrangian subspaces $L_1,L_2$ may intersect in either a line
or in $0$. 
If $L_1\cap L_2\neq 0$,
the corresponding points $p_1,p_2\in\EinTO$ are incident.
Otherwise
\begin{equation*}
V = L_1 \oplus L_2 
\end{equation*}
and the linear involution of $V$
\begin{equation*}
\theta = \Id_{L_1} \oplus -\Id_{L_2} 
\end{equation*}
is {\em anti-symplectic \/}: 
\begin{equation*}
\omega(\theta(v_1),\theta(v_2)) = - \omega(v_1,v_2).   
\end{equation*}
The corresponding involution of $\EinTO$ fixes the two points $p_1,p_2$ and the
spacelike circle $L(p_1)\cap L(p_2)$. 
It induces a time-reversing involution of $\EinTO$.
\index{anti-symplectic!involution}

\subsection{Symplectic planes}
\index{symplectic!plane}
\index{symplectic!involution}
Let $P\subset V$ be a {\em symplectic plane,\/} that is, one for which
the restriction $\omega|_P$ is nonzero (and hence nondegenerate).
Its symplectic complement $P^\perp$ is also a symplectic plane,
and 
\begin{equation*}
V = P \oplus P^\perp 
\end{equation*}
is a symplectic direct sum decomposition. 

Choose a basis $\{u_1,u_2\}$ for $P$. 
We may assume that $\omega(u_1,u_2) = 1$.
Then 
\begin{equation*}
\BB(u_1\wedge u_2,\omega^*) = 1 
\end{equation*}
and
\begin{equation*}
\upsilon_P := 2 u_1\wedge u_2 +  \omega^* 
\end{equation*}
lies in $(\omega^*)^\perp$
since $\BB(\omega^*,\omega^*)=-2$. Furthermore
\begin{align*}
\BB(\upsilon_P,\upsilon_P) & =  \;
\BB(2 u_1\wedge u_2, 2 u_1\wedge u_2) \;+\; 2\ \BB( 2 u_1\wedge u_2, \omega^*) 
\;+\; \BB(\omega^*,\omega^*)\\ &  =\; 0\; +\; 4\; -\; 2 \\ & =\; 2.
\end{align*}
whence $\upsilon_P$ is a positive vector in $W_0 \cong \RR^{3,2}$. In particular
$\Pr(\upsilon_P^\perp\cap\NN(W_0))$ is an Einstein hypersphere.

The two symplectic involutions leaving $P$ (and necessarily also $P^\perp$)
invariant 
\begin{equation*}
\pm \big(\Id|_{P} \oplus -\Id|_{P^\perp}\big)
\end{equation*}
induce maps fixing $\upsilon_P$, and acting 
by $-1$ 
on $(\upsilon_P)^\perp$.
The corresponding eigenspace decomposition
is $\RR^{1,0}\oplus\RR^{2,2}$ and the corresponding conformal involution
in $\EinTO$ fixes an Einstein hypersphere.
\index{Einstein!hypersphere}

\subsection{Positive complex structures and the Siegel space}
Not every involution of $\EinTO$ arises from a linear involution
of $V$. Particularly important are those which arise
from {\em compatible complex structures\/}, defined as follows.
A {\em complex structure\/} on $V$ is an automorphism
$V\xrightarrow{\JJ}V$ such that 
$\JJ\circ\JJ = -\Id$. The pair $(V,\JJ)$ then inherits the structure
of a {\em complex vector space\/} for which $V$ is the underlying
real vector space. The complex structure $\JJ$ is {\em compatible\/}
with the symplectic vector space $(V,\omega)$ when
\begin{equation*}
\omega(\JJ x,\JJ y)  = \omega(x,y) .
\end{equation*}
(In the language of complex differential geometry,
the exterior $2$-form $\omega$ {\em has Hodge type \/}
$(1,1)$ on the complex vector space $(V,\JJ)$.) Moreover 
\begin{align*}
V \times V & \longrightarrow \CC \\
(v,w) & \longmapsto \omega(v,\JJ w) +  i \omega(v,w) 
\end{align*}
defines a {\em Hermitian form\/} on $(V,\JJ)$.

A compatible complex structure $\JJ$ on $(V,\omega)$ is {\em positive\/} if $\omega(v,\JJ v) > 0$ whenever $v\neq 0$.
Equivalently, the symmetric bilinear form defined by
\begin{equation*}
v \cdot w := \omega(v,\JJ w) 
\end{equation*}
is positive definite. This is in turn equivalent to the above Hermitian
form being positive definite.

The positive compatible complex structures on $V$ are parametrized by the {\em
symmetric space\/} of $\Sp(4,\RR)$. A convenient model is the {\em
Siegel upper-half space\/} $\mathfrak{S}_2$, which can be realized as
the domain of $2\times 2$ complex symmetric matrices with positive definite imaginary part (Siegel~\cite{Siegel}).
\index{Siegel upper-half space} 

A matrix $M \in \Sp(4,\RR)$ acts on a complex structure $\JJ$ by 
$$ \JJ \mapsto M \JJ M^{-1}$$
and the stabilizer of any $\JJ$ is conjugate to $U(2)$, the group of unitary transformations of $\CC^2$.
Let the symplectic structure
$\omega$ be defined by the $2\times 2$-block matrix 
\begin{equation*}
\JJ := \bmatrix 0_2 & -\Id_2 \\ \Id_2 & 0_2 \endbmatrix .
\end{equation*}
This matrix also defines a complex structure.
Write $M$ as a block matrix with 
\begin{equation*}
M = \bmatrix A & B \\ C & D \endbmatrix  
\end{equation*}
where the blocks $A,B,C,D$ are $2\times 2$ real matrices.
Because $M\in\Sp(4,\RR)$,
\begin{equation}\label{eq:symplecticmatrix}
M^\dag \JJ M = \JJ.   
\end{equation}
The condition that $M$ preserves the complex structure $\JJ$ means 
that $M$ commutes with $\JJ$, which
together with \eqref{eq:symplecticmatrix}, means that
\begin{equation*}
M^\dag M = \Id_4,
\end{equation*}
that is, $M \in \Oh(4)$.
Thus the stabilizer of the pair $(\omega,\JJ)$ is $\Sp(4,\RR)\cap\Oh(4)$,
which identifies with the unitary group $\UU(2)$ as follows.

If $M$ commutes with $\JJ$, then its block entries satisfy
\begin{equation*}
B = -C, \qquad D = A .
\end{equation*}
Relabelling $X = A$ and $Y = C$, then
\begin{equation*}
M = \bmatrix X & -Y \\ Y & X \endbmatrix 
\end{equation*}
corresponds to a complex matrix $Z = X + iY$.  
This matrix is symplectic if and only if $Z$ is unitary,
\begin{equation*}
\bar{Z}^\dag Z= \Id_2 .
\end{equation*}

\subsection{The contact projective structure on photons}\label{sec:contact}

The points of a photon correspond to Lagrangian planes in $V$
intersecting in a common line.
Therefore, photons correspond to linear 1-dimensional subspaces in $V$, 
and the photon space $\Pho$ identifies with the projective
space $\Pr(V)$. This space has a natural contact geometry defined below.

Recall that a {\em contact structure\/} on a manifold $M^{2n+1}$ is 
a vector subbundle $E\subset TM$ of codimension one that is {\em maximally non-integrable}:
$E$ is locally the kernel of a nonsingular $1$-form $\alpha$
such that $\alpha \wedge \big(d\alpha\big)^n$ is nondegenerate at every point. This condition is independent of the $1$-form $\alpha$
defining $E$, and is equivalent to the condition that any two points
in the same path-component can be joined by a smooth curve with
velocity field in $E$. The $1$-form $\alpha$ is called a {\em contact $1$-form\/} defining $E$. 
For more details on contact geometry, see \cite{McDuffSalamon,Goldman,Thurston}.

The restriction of $d\alpha$ to $E$ is a nondegenerate exterior 2-form,
making $E$ into a {\em symplectic vector bundle.\/} Such a vector bundle
always admits a compatible complex structure $J_E:E\longrightarrow E$ (an automorphism
such that $J_E\circ J_E = -\Id$), which gives $E$ the structure of a 
{\em Hermitian vector bundle.\/} The contact structure we define on
photon space $\Pr(\RR^4)\cong\Pho$ will have such Hermitian structures and
contact $1$-forms arising from compatible complex structures on the symplectic
vector space $\RR^4$.

\subsubsection{Construction of the contact structure}

Let $v\in V$ be nonzero, and denote the corresponding line by 
$[v]\in \Pr(V)$. The tangent space $T_{[v]}\Pr(V)$ naturally identifies with
$\Hom([v],V/[v])$ ($[v]\subset V$ denotes the $1$-dimensional subspace
of $V$, as well).  If $V_1\subset V$ is a hyperplane complementary to
$[v]$, then an {\em affine patch\/} for $\Pr(V)$ containing $[v]$ is
given by
\begin{align*}
\Hom([v],V_1) & \xrightarrow{A_{V_1}}  \Pr(V) \\
\phi & \longmapsto  [v + \phi(v)] .
\end{align*}
That is, $A_{V_1}(\phi)$ is the graph of the linear map $\phi$
in $V = [v] \oplus V_1$. This affine patch defines an isomorphism
\begin{equation*}
T_{[v]}\Pr(V)\longrightarrow \Hom([v],V_1) \cong \Hom([v],V/[v])  
\end{equation*}
that is independent of the choice of $V_1$.
\index{contact!structure}
Now, since $\omega$ is skew-symmetric, symplectic product with $v$ defines
a linear functional 
\begin{align*}
V/[v] & \xrightarrow{\alpha_v} \RR \\ 
u     & \longmapsto \omega(u,v).
\end{align*}
The hyperplane field 
\begin{equation*}
[v] \longmapsto \{ \varphi \ : \ \alpha_v \circ \varphi = 0 \}  
\end{equation*}
is a well-defined
\emph{contact plane field} on $\Pr(V)$.
\index{contact!plane field} 
It posseses a unique transverse orientation; we denote
a contact 1-form for this hyperplane field by $\alpha$.
\index{contact!1-form}

\subsubsection{The contact structure and polarity}
\label{sec:polarity}
The contact structure and the projective geometry of $\Pr(V)$
interact with each other in an interesting way.
If $p\in\Pr(V)$, then the contact structure at $p$ is 
a hyperplane $E_p\subset T_p\Pr(V)$. 
There is a unique projective hyperplane
$H = H(p)$ tangent to $E_p$ at $p$.
Conversely, suppose $H\subset\Pr(V)$ is a projective hyperplane.
The contact plane field is transverse to $H$ everywhere but one point,
and that point $p$ is the unique point for which $H = H(p)$.
This correspondence results from the correspondence between
a line $\ell\subset V$ 
and its symplectic orthogonal $\ell^\perp\subset V$. 

The above correspondence is an instance of a {\em polarity\/} in projective
geometry. A {\em polarity\/} of a projective space $\Pr(V)$  is a projective
isomorphism between $\Pr(V)$ and its dual $\Pr(V)^* :=\Pr(V^*)$, arising from a nondegenerate bilinear form
on $V$, which can be either symmetric or skew-symmetric.

Another correspondence is between the set of photons through a given point $p\in\EinTO$ and the set of $1$-dimensional linear suspaces of the Lagrangian plane
$L_p\subset V$. The latter set projects to a projective line in $\Pr(V)$ tangent to the contact plane field, a {\em contact projective
line.\/} All contact projective lines arise from points in $\EinTO$ in this way.

\subsubsection{Relation with positive complex structures on $\RR^4$}

A compatible positive complex structure $\JJ$ defines a {\em contact vector field\/}
for the contact structure as follows.
Let $\Omega\subset\Pr(V)$ be a subdomain.
For any nonzero $v\in V$, the map $v\longmapsto\JJ(v)$ defines an element of
$\Hom([v],V/[v])$, that is, a tangent vector in $T_{[v]}\Pr(V)$. The
resulting vector field $\xi_{\JJ}$ satisfies $\alpha(\xi_J)>0$
for any $1$-form $\alpha$ defining the contact structure,
since $\omega(v,\JJ v) > 0$ for nonzero $v\in V$.  More generally, for any
smooth map $\JJ:\Omega\longrightarrow\mathfrak{S}_2$, this
construction defines a contact vector field.
\index{contact!vector field}
\index{complex structure}

\subsection{The Maslov cycle}
\index{Maslov!cycle}
Given a $2n$-dimensional symplectic vector space $V$ over $\RR$, 
the set $\Lag(V)$ of Lagrangian subspaces of $V$ 
is a compact homogeneous space.
It identifies with 
$\operatorname{U}(n)/\Oh(n)$, given a choice of a positive compatible
complex structure on $V\cong \RR^{2n}$. 
\index{Lagrangian!Grassmannian}
The fundamental group
\begin{equation*}
\pi_1\big(\Lag(V)\big)   \cong \ZZ.
\end{equation*}
An explicit isomorphism is given by the {\em Maslov index,\/}
which associates to a loop $\gamma$ in $\Lag(V)$ an integer.
(See McDuff-Salamon~\cite{McDuffSalamon}, \S 2.4 for a general discussion.)

Let $W\in\Lag(V)$ be a Lagrangian subspace.
The {\em Maslov cycle\/} $\Mas_W(V)$ 
associated to $W$ is the subset of $\Lag(V)$ consisting of $W'$ such that
\begin{equation*}
W \cap W' \neq 0.
\end{equation*}
Although it is not a submanifold, $\Mas_W(V)$ carries a natural co-orientation
(orientation of its conormal bundle) and defines a cycle whose homology
class generates $H_{N-1}(\Lag(V),\ZZ)$ where
\begin{equation*}
N = \frac{n(n+1)}{2} = \dim\big(\Lag(V)\big) .
\end{equation*}
The Maslov index of a loop $\gamma$ is the oriented intersection number of $\gamma$
with the Maslov cycle (after $\gamma$ is homotoped to be transverse to
$\Mas_W(V)$).
\index{Maslov!index}
If $p\in\EinTO$ corresponds to a Lagrangian subspace $W\subset V$, then the Maslov cycle $\Mas_W(V)$ corresponds to the lightcone $L(p)$. (We thank A. Wienhard for this observation.)

\subsection{Summary}

We now have a dictionary between the symplectic geometry of
$\RR^4_\omega$ and the orthogonal geometry of $\RR^{3,2}$:
\index{dictionary}
\begin{table}[ht]
\begin{center}
\begin{tabular}{ | l | l  | }
\hline
Symplectic $\RR^4_\omega$ and contact $\Pr(V)$ 
& Pseudo-Riemannian $\RR^{3,2}$ and $\EinTO$ 
\\ \hline \hline
Lagrangian planes $L \subset \RR^4_\omega$ & Points $p\in\EinTO$  \\ \hline
Contact projective lines in $\Pr(V)$& Points $p\in
\EinTO$  \\ \hline
Lines $\ell \subset \RR^4_\omega$ & Photons $\phi$  \\ \hline
Hyperplanes $\ell^\perp \subset \RR^4_\omega$ & Lightcones  \\ \hline
Symplectic planes (splittings) in $\RR^4_\omega$ & Einstein hyperspheres \\ \hline
Linear symplectic automorphisms & time-preserving conformal 
automorphisms \\ \hline
Linear anti-symplectic automorphisms & time-reversing conformal 
automorphisms \\ \hline
Flags $\ell\subset L\subset\ell^\perp$ in $\RR^4_\omega$ & 
Incident pairs $p\in \phi \subset L(p)$ \\ \hline
Positive compatible complex structures  & 
Free involutions of $\EinTO$\\ \hline
Lagrangian splittings $V = L_1 \oplus L_2$  & 
Nonincident pairs of points  \\ \hline
Lagrangian splittings $V = L_1 \oplus L_2$  & 
Spacelike circles \\ \hline
\end{tabular}

\end{center}
\end{table}

\section{Lie theory of $\Pho$ and $\EinTO$}
\label{sec.liealgebra}

This section treats the structure of the Lie algebra $\spf$ and the
isomorphism with $\ott$.  We relate differential-geometric properties
of the homogeneous spaces $\EinTO$ and $\Pho$ with
the Lie algebra representations corresponding to the isotropy. This
section develops the structure theory (Cartan subalgebras, roots,
parabolic subalgebras) and relates these algebraic notions to the
synthetic geometry of the three parabolic homogenous spaces $\EinTO$,
$\Pho$ and $\Flag$.  Finally, we discuss the geometric significance of
the Weyl group of $\Sp(4,\RR)$ and $\SOh(2,3)$.

\subsection{Structure theory}
Let $V \cong \RR^4$, equipped with the symplectic form $\omega$, as above.
We consider a {\em symplectic basis\/} $e_1,e_2,e_3,e_4$ in which $\omega$ is
\begin{equation*}
\JJ = \bmatrix 
0 & -1 & 0 & 0 \\ 
1 &  0 & 0 & 0 \\ 
0 & 0 & 0 & -1 \\ 
0 & 0 & 1 & 0 \endbmatrix
\end{equation*}

\index{symplectic basis}

\newcommand{\aot}{a_{12}}
\newcommand{\ato}{a_{21}}
\renewcommand{\bot}{b_{12}}
\newcommand{\bto}{b_{21}}
\newcommand{\roo}{r_{11}}
\newcommand{\rot}{r_{12}}
\newcommand{\rto}{r_{21}}
\newcommand{\rtt}{r_{22}}

The Lie algebra $\ggg = \spf$ consists of all $4\times 4$ real matrices $M$
satisfying 
\begin{equation*}
M^\dag \JJ + \JJ M = 0, 
\end{equation*}
that is,
\begin{equation}\label{coords.sp4}
M = 
\bmatrix  
a & \aot & \roo & \rot \\
\ato & -a & \rto & \rtt \\
-\rtt & \rot & b & \bot \\
\rto & -\roo & \bto & -b 
\endbmatrix  
\end{equation}
where $a,b,a_{ij},b_{ij},r_{ij}\in\RR$. 

\subsubsection{Cartan subalgebras}
A {\em Cartan subalgebra\/} $\aa$ of $\spf$
is the subalgebra stabilizing the four coordinate lines $\RR e_i$ for
$i=1,2,3,4$, and comprises the diagonal matrices
\begin{equation*}
H(a,b) :=
\bmatrix  
a & 0 & 0 & 0 \\
0 & -a & 0 & 0 \\
0 & 0 & b & 0 \\
0 & 0 & 0 & -b 
\endbmatrix  
\end{equation*}
for $a,b\in\RR$. The calculation 
\begin{equation*}
[H, M] = 
\bmatrix  
0  & (2a)\aot & (a-b)\roo & (a+b)\rot \\
(-2a)\ato & 0  & (-a-b)\rto & (-a+b)\rtt \\
(a-b)\rtt & (-a-b)\rot & 0 & (2b)\bot \\
(a+b)\rto & (-a+b)\roo & (-2b)\bto & 0  
\endbmatrix  
\end{equation*}
implies that the eight linear functionals assigning to $H(a,b)$ the values 
\begin{equation*}
2a, -2a, 2b, -2b, a-b, a+b, -a-b, -a+b
\end{equation*}
define the {\em root system\/}
\begin{align*}
\Delta & := \{ 
(2,0),(-2,0), (0,2), (0,-2), \\
& \qquad (1,-1), (1,1), (-1, -1), (-1, 1) 
\} \;\subset\; \aa^*
\end{align*}
pictured below.
\index{root diagram! $\spf$}

\begin{figure}[h]
\begin{center}
\includegraphics[width=3cm, height=3cm]{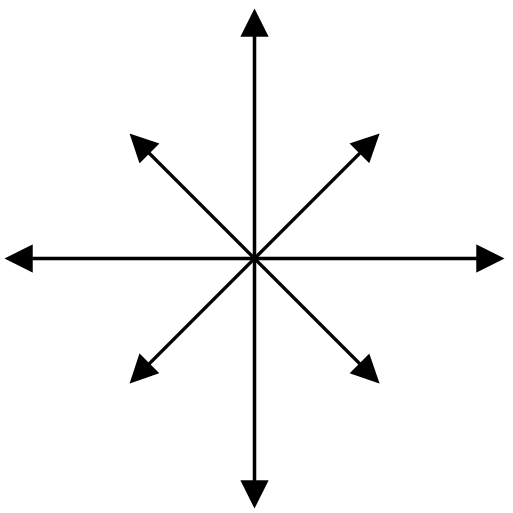}
\end{center}
\caption{Root diagram of $\spf$} \label{diagram.spf}
\end{figure}

\subsubsection{Positive and negative roots}
A vector $v_0\in\aa$ such that $\lambda(v_0)\neq 0$ for all roots
$\lambda\in\Delta$ partitions $\Delta$ into {\em positive roots\/} $\Delta_+$
and {\em negative roots\/} $\Delta_-$ depending on whether
$\lambda(v_0)>0$ or $\lambda(v_0)<0$ respectively. For example,
\begin{equation*}
v_0 = \bmatrix 1 \\ 2 \endbmatrix 
\end{equation*}
partitions $\Delta$ into
\begin{align*}
\Delta_+ & =\; \{\, (2,0), (1,1), (0,2), (-1,1)    \,\} \\ 
\Delta_- & =\; \{\, (-2,0), (-1,-1), (0,-2),(1,-1) \,\}.
\end{align*}
The positive roots
\begin{equation*}
\alpha := (2,0),\; \beta  := (-1,1)  
\end{equation*}
form a pair of {\em simple positive roots\/} in the sense that every 
$\lambda\in\Delta_+$ is a positive integral linear combination of
$\alpha$ and $\beta$. Explicitly:
\begin{equation*}
\Delta_+ = \{\alpha, \alpha + \beta, \alpha + 2\beta, \beta\}.
\end{equation*}

\subsubsection{Root space decomposition}
For any root $\lambda\in\Delta$, define the {\em root space\/}
\begin{equation*}
\ggg_\lambda  := \{ X\in \ggg \mid [H,X] = \lambda(H) X \}.
\end{equation*}
In $\ggg=\spf$,
each root space is one-dimensional, and the elements 
$X_\lambda\in\ggg_\lambda$ are 
called {\em root elements.\/} The Lie algebra decomposes as a direct sum
of vector spaces:
\begin{equation*}
\ggg =  \aa \oplus \bigoplus_{\lambda\in\Delta} \ggg_\lambda.
\end{equation*}
For more details, see Samelson~\cite{Samelson}.

\subsection{Symplectic splittings}
The basis vectors $e_1,e_2$ span a symplectic plane $P\subset V$
and $e_3,e_4$ span its symplectic complement $P^\perp\subset V$.
These planes define a {\em symplectic direct sum decomposition\/}
\begin{equation*}
V = P \oplus P^{\perp}. 
\end{equation*}
The subalgebra $\hh_P \subset \sp(4,\RR)$ preserving $P$ 
also preserves $P^\perp$
and consists of matrices of the form \eqref{coords.sp4}
that are block-diagonal:
\begin{equation*}
\bmatrix  
a & \aot & 0 & 0 \\
\ato & -a & 0 & 0 \\
0 & 0 & b & \bot \\
0 & 0 & \bto & -b 
\endbmatrix.
\end{equation*}
Thus
\begin{align*}
\hh_P &\cong \sp(2,\RR) \oplus \sp(2,\RR) \\
& \cong \sl(2,\RR) \oplus \sl(2,\RR) .
\end{align*}
The Cartan subalgebra $\aa$ of $\spf$ is also a Cartan subalgebra of
$\hh_P$, but only the four long roots 
\begin{align*}
\Delta' = \{ (\pm 2,0),(0,\pm 2)\} = 
\{\pm \alpha, \pm (\alpha + 2\beta)\}
\end{align*}
are roots of $\hh_P$. In particular $\hh_P$ decomposes as
\begin{equation*}
\hh_P =  \aa \oplus \bigoplus_{\lambda\in\Delta'} \ggg_\lambda.
\end{equation*}


\index{symplectic!plane} 

\subsection{The Orthogonal Representation of $\spf$}
\label{sub.lie.homom}
Let $e_1, \ldots, e_4$ be a symplectic basis for $V$ as above and
\begin{equation*}
\vol := e_1 \wedge e_2 \wedge e_3 \wedge e_4 
\end{equation*}
a volume element for $V$.
A convenient basis for $\Lambda^2 V$ is:
\begin{align}\label{eq:secondexteriorpowerbasis}
f_1 & := e_1 \wedge e_3 \nonumber\\ 
f_2 & := e_2 \wedge e_3 \nonumber \\ 
f_3 & := \frac{1}{\sqrt{2}}(e_1 \wedge e_2 - e_3 \wedge e_4 )\nonumber\\ 
f_4 & := e_4 \wedge e_1 \nonumber\\ 
f_5 & := e_2 \wedge e_4 
\end{align}
for which the matrix
\begin{equation*}\label{eq:secondexteriorpowerinnerproduct}
\bmatrix 
0 & 0 & 0 & 0 & 1 \\ 
0 & 0 & 0 & 1 & 0 \\ 
0 & 0 & 1 & 0 & 0 \\ 
0 & 1 & 0 & 0 & 0 \\ 
1 & 0 & 0 & 0 & 0 
\endbmatrix
\end{equation*}
defines the bilinear form $\BB$ associated to this volume element.

The matrix $M$ defined in \eqref{coords.sp4} above maps to 
\begin{equation}\label{homom.coords}
\widetilde{M} = \bmatrix   
a + b & \aot &  \rot &  -\bot & 0 \\
\ato & -a + b &   \rtt &  0 & \bot \\
\rto & \roo & 0 & -\rtt & -\rot \\
-\bto & 0 &  -\roo & a - b &  -\aot \\
0 & \bto &  -\rto & -\ato &  -a - b \endbmatrix
\;\in\; \so(3,2) .
\end{equation}

For a fixed symplectic plane $P \subset V$, such as the one spanned by
$e_1$ and $e_2$, denote by $P \wedge P^{\perp}$ the subspace of
$\Lambda^2 V$ of elements that can be written in the form $\sum_i v_i
\wedge w_i$, where $v_i \in P$ and $w_i \in P^{\perp}$ for all $i$.
The restriction of the bilinear form $\BB$ to this
subspace, which has basis $\{f_1,f_2, f_4, f_5\}$, is type $(2,2)$.  
Its stabilizer is the image $\widetilde{\hh}_P$ of $\hh_P$
in $\ott$.  Note that this image is isomorphic to 
\begin{equation*}
\oh(2,2) \cong \sl(2,\RR) \oplus \sl(2,\RR). 
\end{equation*}






\subsection{Parabolic subalgebras}
The homogeneous spaces $\EinTO$, $\Pho$ and $\Flag$ identify with 
quotients $G/P$ of $G = \Sp(4,\RR)$ where $P\subset G$ is a proper 
{\em parabolic subgroup.\/}  
When $G$ is algebraic, then any parabolic subgroup
$P$ of $G$ is algebraic, and the quotient $G/P$ is a compact
projective variety.  See Chapter~7 of \cite{Knapp} for more details.
\index{parablolic!subgroup}

As usual, working with Lie algebras is more convenient. We denote the corresponding {\em parabolic 
subalgebras\/} by $\ppp$, and they are indexed by subsets
$S\subset\Pi^-$ of the set $\Pi^- := \{-\alpha,-\beta\}$ of simple
negative roots, as follows.
\newcommand{\Shat}{\widetilde{S}}

The {\em Borel subalgebra\/} or {\em minimal parabolic subalgebra\/}
corresponds to $S = \emptyset$ and is defined as
\begin{equation*}
\ppp_\emptyset := 
\ppp_S = \aa  \oplus \bigoplus_{\lambda \in \Delta^+} \ggg_\lambda.
\end{equation*}
In general, let $\Shat$ be the set of finite sums of elements of $S$.  
The parabolic subalgebra determined by $S$ is
\begin{equation*}
\ppp_S := \ppp_\emptyset  \oplus \bigoplus_{\lambda \in \Shat} \ggg_\lambda.
\end{equation*}
\index{parabolic!subgroup} 
\index{parabolic!subalgebra} 

\subsubsection{The Borel subalgebra and $\Flag$}
Let $\ppp_\emptyset$ be the Borel subalgebra defined above.
The corresponding Lie subgroup $P_\emptyset$ is the stabilizer of a unique
pointed photon, equivalently, an isotropic flag, in $\Flag$; thus
$\Flag$ identifies with the homogeneous space $G/P_\emptyset$.
The subalgebra
\begin{equation*}
\uu_\emptyset := \sum_{\lambda\in\Delta_+}\ggg_\lambda \subset \spf
\end{equation*}
is the Lie algebra of the {\em unipotent radical\/} of $P_\emptyset$
and is 3-step nilpotent. A realization of the corresponding group is the group
generated by the translations of $\EE^{2,1}$ and a unipotent
one-parameter subgroup of $\SOh(2,1)$.

\subsubsection{The parabolic subgroup corresponding to $\Pho$}
Now let $S=\{-\alpha\}$; the corresponding parabolic subalgebra
$\ppp_{\alpha}$ is the stabilizer subalgebra of a
line in $V$, or, equivalently, of a point in $\Pr(V)$.
In $\ott$ this parabolic is the stabilizer of a null plane in $\RR^{3,2}$, or, equivalently, of
a photon in $\EinTO$. 

\index{photon} 

\subsubsection{The parabolic subgroup corresponding to $\EinTO$}
Now let $S=\{-\beta\}$; the corresponding parabolic subalgebra
$\ppp_{\beta}$ is the stabilizer subalgebra of a
Lagrangian plane in $V$, or, equivalently, a contact projective line
in $\Pr(V)$. In $\ott$, this parabolic is the stabilizer of a null line in $\RR^{3,2}$, or, equivalently, of
a point in $\EinTO$.

\subsection{Weyl groups}
\label{sub.weylgroup}

The Weyl group $W$ of $\Sp(4,\RR)$ is isomorphic to a dihedral
group of order $8$ (see Figure \ref{diagram.spf}). 
It acts by permutations on elements of the quadruples in $\Pr(V)$
corresponding to a basis of $V$.
\index{Weyl!group} 

Let $A$ be the connected subgroup of $\Sp(4,\RR)$, with
Lie algebra $\mathfrak{a}$.  In the symplectic basis $e_1, \ldots, e_4$, it
consists of matrices of the form
\begin{equation*}
\left[
\begin{array}{cccc}
a_1 & & &  \\
& a^{-1}_1 & & \\
& &  a_2 &   \\
& &  &   a_2^{-1}
\end{array}
\right]
\qquad a_1, a_2 > 0 .
\end{equation*}
The semigroup $A^+ \subset A$ with $a_2 > a_1 > 1$ corresponds to an
open Weyl chamber in $\aa$. For $i=1,2,3,4$, let $H_i$ be the
image in $\Pr(V)$ of the hyperplane spanned by $e_j$ for $j\neq i$.
The point $[e_3] \in \Pr(V)$ is an attracting fixed point for all
sequences in $A^+$, and $[e_4]$ is a repelling fixed point:
\index{Weyl!chamber} Any unbounded $a_n \in A^+$ converges
uniformly on compact subsets of $\Pr(V) \backslash H_3$ to the
constant map $[e_3]$, while $a_n^{-1}$ converges to $[e_4]$ uniformly
on compact subsets of $\Pr(V) \backslash H_2$.  On $H_3 \backslash
(H_3 \cap H_1)$, an unbounded sequence $\{ a_n \}$ converges to $[e_1]$,
while on $H_4 \backslash (H_4 \cap H_2)$, the inverses $a_n^{-1}$
converge to $[e_2]$.  


We will call the point $[e_1]$ a 
\emph{codimension-one attracting fixed point} 
for sequences in $A^+$ and $[e_2]$ a
\emph{codimension-one repelling fixed point}. 
Every Weyl chamber has associated to it a
\emph{dynamical quadruple} like $([e_3],[e_4],[e_1],[e_2] )$, 
consisting of an attracting fixed point, a repelling fixed point, 
a codimension-one attracting fixed point, and a codimension-one repelling fixed 
point.

\index{dynamical quadruple} 
\index{attracting fixed point!codimension-one} 
\index{repelling fixed point!codimension-one} 
\index{attracting fixed point} 
\index{repelling fixed point}

Conversely, given a symplectic basis $v_1, \ldots, v_4$, the intersection of the stabilizers in $\Sp(4,\RR)$ of the lines $\RR v_i$ is a Cartan subgroup $A$.  The elements $a \in A$ such that $([v_1],
\ldots, [v_4])$ is a dynamical quadruple for the sequence $a^n$ form a
semigroup $A^+$ that is an open Weyl chamber in $A$.

The Weyl group acts as a group of permutations of
such a quadruple.  These permutations must preserve a stem
configuration as in Figure~\ref{fig.configuration}, where now two
points are connected by an edge if the corresponding lines in $V$ are
in a common Lagrangian plane, or, equivalently, the two points of
$\Pr(V)$ span a line tangent to the contact structure.
The permissible permutations are those preserving the partition $\{ v_1, v_2 \} | \{ v_3, v_4 \}$.


In $\Oh(3,2)$, the Weyl group consists of permutations of four points
$p_1, \ldots, p_4$ of $\EinTO$ in a stem configuration that preserve
the configuration.
A Weyl chamber again corresponds to a dynamical quadruple $(p_1,
\ldots, p_4)$ of fixed points, where now sequences $a_n \in A^+$
converge to the constant map $p_1$ on the complement of $L(p_2)$ and
to $p_3$ on $L(p_2) \backslash (L(p_4) \cap L(p_2))$; the inverse
sequence converges to $p_2$ on the complement of $L(p_1)$ and to $p_4$
on $L(p_1) \backslash (L(p_1) \cap L(p_3))$.





\section{Three kinds of dynamics}
\label{sec.dynamic}

In this section, we present the ways sequences in $\Sp(4,\RR)$ can
diverge to infinity in terms of projective singular limits.  In
\cite{Frances2}, Frances defines a trichotomy for sequences diverging
to infinity in $\Oh(3,2)$: they have \emph{bounded}, \emph{mixed}, or
\emph{balanced distortion}.  He introduces limit sets for such
sequences and finds maximal domains of proper discontinuity for
certain subgroups of $\Oh(3,2)$.  We translate Frances' trichotomy to
$\Sp(4,\RR)$, along with the associated limit sets and maximal domains
of properness.

\subsection{Projective singular limits}
Let $E$ be a finite-dimensional vector space, and let $(g_{n})_{n \in
\mathbb N}$ be a sequence of elements of $\operatorname{GL}(E)$.  This
sequence induces a sequence $(\bar{g}_{n})_{n \in \mathbb N}$ of
projective transformations of $\Pr(E)$.  Let $\|\cdot\|$ be an auxiliary
Euclidean norm on $E$ and let $\|\cdot \|_{\infty}$ be the associated operator
norm on the space of endomorphisms $\operatorname{End}(E)$.  The
division of $g_{n}$ by its norm $\| g_{n} \|_{\infty}$ does not modify
the projective transformation $\bar{g}_{n}$. Hence we can assume that
$g_{n}$ belongs to the $\|\cdot\|_{\infty}$-unit sphere of
$\operatorname{End}(E)$. This sphere is compact, so $({g}_{n})_{n \in
\mathbb N}$ admits accumulation points. Up to a subsequence, we can
assume that $({g}_{n})_{n \in \mathbb N}$ converges to an element
$g_{\infty}$ of the $\|\cdot\|_{\infty}$-unit sphere. Let $I$ be the image of
$g_{\infty}$, and let $L$ be the kernel of $g_{\infty}$. Let
\begin{equation*}
\bar{g}_{\infty}: \Pr(E) \setminus \Pr(L) \to \Pr(I) \subset \Pr(E) 
\end{equation*}
be the induced map. 

\begin{proposition}
\label{pro.projective limit}
For any compact $K \subset \Pr(E) \setminus \Pr(L)$, the
restriction of the sequence $(\bar{g}_{n})_{(n \in \mathbb N)}$ on $K$ converges uniformly to
the restriction on $K$ of $\bar{g}_{\infty}$.
\end{proposition}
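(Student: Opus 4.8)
The plan is to show uniform convergence directly from the definition of the operator norm, exploiting that we have reduced to the case where $g_n \to g_\infty$ in $\operatorname{End}(E)$ with all terms on the unit sphere. The key observation is that projectivization is continuous away from the locus where the map vanishes, and the obstruction to uniform control is precisely the behavior near $\Pr(L) = \Pr(\Ker g_\infty)$; restricting to a compact set $K$ disjoint from $\Pr(L)$ is exactly what quarantines this bad locus.

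First I would fix an auxiliary Euclidean norm $\|\cdot\|$ on $E$ and work with unit representatives. For a point $[x] \in K$, choose the representative $x$ with $\|x\| = 1$; since $K$ is compact and disjoint from $\Pr(L)$, the continuous function $[x] \mapsto \|g_\infty(x)\|$ attains a positive minimum $\delta > 0$ on $K$ (here I use that $g_\infty(x) = 0$ would force $[x] \in \Pr(L)$). Next I would estimate the distance in $\Pr(E)$ between $\bar{g}_n([x])$ and $\bar{g}_\infty([x])$. Using the bound $\|g_n(x) - g_\infty(x)\| \le \|g_n - g_\infty\|_\infty$ (which tends to $0$ uniformly in $[x]$ since $x$ is a unit vector), and the fact that $\|g_\infty(x)\| \ge \delta$, one gets that $g_n(x)$ is eventually bounded away from $0$ on $K$ and lies within a controlled cone around $g_\infty(x)$. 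Converting this into an estimate on the Fubini–Study (or any fixed) distance between the two lines $[g_n(x)]$ and $[g_\infty(x)]$ is then a routine computation: the angle between two nonzero vectors is controlled by the norm of their difference divided by the smaller of their norms.

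The main technical point I expect to require care is the uniformity: I must ensure that the threshold beyond which $\|g_n - g_\infty\|_\infty$ is small enough can be chosen independently of the point $[x] \in K$. This is handled by the two uniform facts established above—namely that $\delta$ is a single positive constant valid for all of $K$, and that the perturbation $\|g_n - g_\infty\|_\infty$ is a single scalar sequence tending to $0$ with no dependence on $x$. Concretely, given $\varepsilon > 0$, I would produce an explicit $N$ (depending on $\varepsilon$ and $\delta$ but not on $[x]$) so that for all $n \ge N$ and all $[x] \in K$ the distance between $\bar{g}_n([x])$ and $\bar{g}_\infty([x])$ is below $\varepsilon$, which is precisely the assertion of uniform convergence on $K$.

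In summary, the proof reduces to a one-parameter continuity estimate for projectivization combined with compactness of $K$; the only genuine subtlety is confirming that the lower bound $\delta$ on $\|g_\infty(\cdot)\|$ and the sphere-normalization of $g_n$ together decouple the estimate from the choice of point, which is exactly what compactness of $K$ and its disjointness from $\Pr(L)$ guarantee.
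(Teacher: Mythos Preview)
Your argument is correct and is the standard one. The paper itself does not supply a proof of this proposition; it is stated as a basic fact about projective limits and immediately followed by its corollary. Your approach---normalizing to the unit sphere in $\operatorname{End}(E)$, extracting a uniform lower bound $\delta>0$ for $\|g_\infty(x)\|$ over unit representatives of points in $K$ via compactness and disjointness from $\Pr(L)$, and then converting the linear estimate $\|g_n(x)-g_\infty(x)\|\le\|g_n-g_\infty\|_\infty$ into a uniform bound on the projective distance---is exactly what one would expect, and fills the gap cleanly.
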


\begin{corollary}
\label{cor.domaine propre}
Let $\Gamma$ be a \emph{discrete} subgroup of
$\operatorname{PGL}(E)$. Let $\Omega$ be the open subset of $\Pr(E)$
formed by points admitting a neighborhood $U$ such that, for any
sequence $(g_n)$ in $\Gamma$ with accumulation point $g_{\infty}$
having image $I$ and kernel $L$,
$$ U \cap \Pr(L) = U \cap \Pr(I) = \emptyset .$$

Then $\Gamma$ acts properly discontinuously on $\Omega$.
\end{corollary}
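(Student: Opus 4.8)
The plan is to show proper discontinuity directly from the definition, using Proposition~\ref{pro.projective limit} to control the dynamics of sequences in $\Gamma$ near the compact sets we wish to displace. Recall that $\Gamma$ acts properly discontinuously on $\Omega$ precisely when, for every compact $K \subset \Omega$, the set of $\gamma \in \Gamma$ with $\gamma K \cap K \neq \emptyset$ is finite. So first I would fix an arbitrary compact $K \subset \Omega$ and argue by contradiction: suppose infinitely many distinct elements $\gamma_n \in \Gamma$ satisfy $\gamma_n K \cap K \neq \emptyset$. Since $\Gamma$ is discrete in $\operatorname{PGL}(E)$, such a sequence cannot accumulate in $\operatorname{PGL}(E)$; hence, after passing to a subsequence and normalizing representatives $g_n$ to lie on the $\|\cdot\|_\infty$-unit sphere of $\operatorname{End}(E)$ as in the setup, the $g_n$ converge to some $g_\infty$ of operator norm one which is \emph{not} invertible (otherwise the $\gamma_n$ would converge in $\operatorname{PGL}(E)$, contradicting discreteness). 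Thus $g_\infty$ has a nontrivial kernel $L$ and proper image $I$, and $g_\infty$ is an accumulation point of the sequence in the sense of the corollary.

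Next I would invoke the defining property of $\Omega$. By hypothesis every point of $\Omega$ has a neighborhood $U$ with $U \cap \Pr(L) = U \cap \Pr(I) = \emptyset$ for this accumulation point $g_\infty$. Covering the compact set $K$ by finitely many such neighborhoods, I obtain an open set $U \supset K$ (indeed I may shrink to a slightly smaller compact neighborhood $K'$ of $K$ inside $\Omega$) satisfying
\begin{equation*}
K \cap \Pr(L) = \emptyset \quad\text{and}\quad K \cap \Pr(I) = \emptyset .
\end{equation*}
The first condition places $K$ inside the domain $\Pr(E) \setminus \Pr(L)$ on which $\bar g_\infty$ is defined, so Proposition~\ref{pro.projective limit} applies: the restrictions $\bar g_n|_K$ converge uniformly to $\bar g_\infty|_K$, whose image lies in $\Pr(I)$.

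The contradiction then follows from combining these two facts. For each $n$ in our subsequence we have a point $x_n \in K$ with $\gamma_n x_n = \bar g_n(x_n) \in K$; after a further subsequence assume $x_n \to x_\infty \in K$. By uniform convergence on $K$, the images $\bar g_n(x_n)$ converge to $\bar g_\infty(x_\infty) \in \Pr(I)$. But each $\bar g_n(x_n) \in K$ and $K$ is closed, so the limit lies in $K$; hence $\bar g_\infty(x_\infty) \in K \cap \Pr(I)$, contradicting $K \cap \Pr(I) = \emptyset$. Therefore only finitely many $\gamma_n$ can satisfy $\gamma_n K \cap K \neq \emptyset$, and since $K \subset \Omega$ was arbitrary, $\Gamma$ acts properly discontinuously on $\Omega$.

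The main obstacle, and the step requiring the most care, is the uniform-convergence bookkeeping: one must ensure that $g_\infty$ is genuinely non-invertible (so that $L$ and $I$ are the proper subspaces to which the definition of $\Omega$ refers), and that the finite subcover producing $K \cap \Pr(L) = K \cap \Pr(I) = \emptyset$ is arranged \emph{for the same accumulation point} $g_\infty$ attached to the chosen subsequence. The definition of $\Omega$ quantifies over all sequences and their accumulation points simultaneously, so once a particular divergent subsequence $(\gamma_n)$ with accumulation point $g_\infty$ is fixed, the avoidance of $\Pr(L)$ and $\Pr(I)$ by points of $K$ is immediate from the neighborhood property; the only subtlety is that passing to subsequences must not lose the relation $\gamma_n K \cap K \neq \emptyset$, which it does not, since that relation is preserved under taking subsequences.
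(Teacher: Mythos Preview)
Your proposal is correct and follows exactly the natural route the paper intends: the paper states this result as an immediate corollary of Proposition~\ref{pro.projective limit} without giving any further argument, and your contradiction argument via uniform convergence on a compact $K\subset\Omega$ is precisely how one extracts proper discontinuity from that proposition. The care you take with non-invertibility of $g_\infty$ and with the quantifier bookkeeping is appropriate, and nothing more is needed.
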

\noindent
In fact, the condition $U \cap \Pr(L) = \emptyset$ is sufficient to
define $\Omega$ (as is $U \cap \Pr(I) = \emptyset$).  To see this,
note that if $g_n \to \infty$ with 
\begin{align*}
g_{n}/\|g_{n}\|_{\infty} & \longrightarrow g_{\infty} \\
g_{n}^{-1}/\|g_{n}^{-1}\|_{\infty} & \longrightarrow g^{-}_{\infty}, 
\end{align*}
then 
\begin{equation*}
g_\infty \circ g^-_\infty = g^-_\infty \circ g_\infty = 0.  
\end{equation*}
Hence 
\begin{equation*}
\operatorname{Im} (g_{\infty}) \subseteq
\operatorname{Ker} (g^{-}_{\infty}) 
\text{~and~}
\operatorname{Im}(g^{-}_{\infty}) \subseteq
\operatorname{Ker}(g_{\infty}).
\end{equation*}

\subsection{Cartan's decomposition $G=KAK$}  
\label{sub.kak}\index{Cartan decomposition}\index{polar decomposition}
When $(g_{n})_{n \in \mathbb N}$ is a sequence in a semisimple Lie
group $\operatorname{G}$, a very convenient way to identify the
accumulation points $\bar{g}_{\infty}$ is to use the        
$\operatorname{KAK}$-decomposition in $\operatorname{G}$: first select
the Euclidean norm $\|\cdot\|$ so that it is preserved by the maximal compact
subgroup $\operatorname{K}$ of $\operatorname{G}$. Decompose every
$g_{n}$ in the form $k_{n}a_{n}k'_{n}$, where $k_{n}$ and $k'_{n}$
belong to $\operatorname{K}$, and $a_{n}$ belongs to a fixed Cartan
subgroup.  We can furthermore require that $a_{n}$ is the image by the
exponential of an element of the closure of a Weyl chamber. Up to a
subsequence, $k_{n}$ and $k'_{n}$ admit limits $k_{\infty}$ and
$k'_{\infty}$, respectively.  Composition on the right or on the left
by an element of $\operatorname{K}$ does not change the operator norm,
so $g_{n}$ has $\|\cdot\|_{\infty}$-norm $1$ if and only if $a_{n}$ has
$\|\cdot\|_{\infty}$-norm $1$.  Let ${a}_{\infty}$ be an accumulation point of
$(a_{n})_{n \in \mathbb N}$.  Then 
\begin{equation*}
g_{\infty} = k_{\infty}a_{\infty}k'_{\infty}.   
\end{equation*}
The kernel of $g_{\infty}$ is the
image by $(k'_{\infty})^{-1}$ of the kernel of $a_{\infty}$, and the
image of $g_{\infty}$ is the image by $k_{\infty}$ of the image of
$a_{\infty}$.  Hence, in order to find the singular projective limit
$\bar{g}_{\infty}$, the main task is to find the limit $a_{\infty}$,
and this problem is particularly easy when the rank of
$\operatorname{G}$ is small.

\subsubsection{Sequences in $\Sp(4,\RR)$}
The image by the exponential map of a Weyl chamber in $\sp(4, \RR)$ is
the semigroup $A^{+} \subset A$ of matrices (see \S\ref{sub.weylgroup}):

\begin{equation*}
A(\alpha_{1}, \alpha_{2}) = 
\left[
\begin{array}{cccc}
\exp(\alpha_{1}) & & &  \\
  & \exp(-\alpha_{1}) & & \\
  &   &  \exp(\alpha_{2}) &   \\
  &   &  &   \exp(-\alpha_{2})
\end{array}
\right]
\qquad \alpha_{2} > \alpha_{1} > 0 .
\end{equation*}
The operator norm of $A(\alpha_{1}, \alpha_{2})$ is $\exp(\alpha_{2})$.
We therefore can distinguish three kinds of dynamical 
behaviour for a sequence 
$(A(\alpha_{1}^{(n)}, \alpha_{2}^{(n)}))_{n \in \mathbb N}$:

\begin{itemize}
\item 
\emph{no distortion:\/} when $\alpha^{(n)}_{1}$ and
$\alpha^{(n)}_{2}$ remain bounded, \index{sequence!no-distortion}
\item 
\emph{bounded distortion:\/} when $\alpha^{(n)}_{1}$ and
$\alpha^{(n)}_{2}$ are unbounded, but the difference $\alpha^{(n)}_{2}
- \alpha^{(n)}_{1}$ is bounded, \index{sequence!bounded distortion}
\item 
\emph{unbounded distortion:\/} when the sequences $\alpha^{(n)}_{1}$
and $\alpha^{(n)}_{2} - \alpha^{(n)}_{1}$ are unbounded.
\index{sequence!unbounded}
\end{itemize}
This 
distinction
extends to any sequence $(g_{n})_{n \in \mathbb N}$
in $\Sp(4,\RR)$. Assume that the sequence
$(g_{n}/\|g_{n}\|_{\infty})_{n \in \mathbb N}$ converges to a limit
$g_{\infty}$. Then: 

\begin{itemize}
\item For no distortion, the limit $g_{\infty}$ is not
singular---the sequence $(g_{n})_{n \in \mathbb N}$ converges in
$\Sp(4,\RR)$.
\item For bounded distortion, the kernel $L$ and the image $I$
are $2$-dimensional. More precisely, they are Lagrangian subspaces of
$V$. The singular projective transformation $\bar{g}_{\infty}$ is
defined in the complement of a projective line and takes values in a
projective line; these projective lines are both tangent everywhere to
the contact structure. 
\item For unbounded distortion, the singular projective transformation
$\bar{g}_{\infty}$ is defined in the complement of a projective
hyperplane and admits only one value.
\end{itemize}

\subsubsection{Sequences in $\SOh^{+}(3,2)$}
The Weyl chamber of $\SOh^{+}(3,2)$ is simply the image of the Weyl
chamber of $\sp(4, \RR)$ by the differential of the homomorphism
\begin{equation*}
\Sp(4,\RR) \to \SOh^{+}(3,2)  
\end{equation*}
defined in \S{6.2}.  More precisely,
the image of an element $A(\alpha_{1},
\alpha_{2})$ of $A^{+}$ is $A'(a_{1}, a_{2})$ where
\begin{equation*}
a_{1} = \alpha_{1} + \alpha_{2}, a_{2} = \alpha_{2} - \alpha_{1} 
\end{equation*}
and:
\begin{equation*}
A'(a_{1}, a_{2}) = 
\left[
\begin{array}{ccccc}
\exp(a_{1}) & & & &  \\
  & \exp(a_{2}) & & &  \\
  & & 1 & & \\
  &   &  & \exp(-a_{2}) &   \\
  &  & &  &   \exp(-a_{1})
\end{array}
\right]
\qquad a_{1} > a_{2} > 0 .
\end{equation*}

The $KAK$ decomposition of $\Sp(4,\RR)$ above corresponds under the homomorphism to a $KAK$ decomposition of $\SOh^+(3,2)$.  Reasoning as in the previous section, we distinguish three cases:
\begin{itemize}
\item 
\emph{no distortion:\/} when $a^{(n)}_{1}$ and $a^{(n)}_{2}$
remain bounded,
\item 
\emph{balanced distortion:\/} when $a^{(n)}_{1}$ and
$a^{(n)}_{2}$ are unbounded, but the difference $a^{(n)}_{1} -
a^{(n)}_{2}$ is bounded, \index{sequence!balanced distortion}
\item 
\emph{unbalanced distortion:\/} 
when the sequences $a^{(n)}_{1}$ and $a^{(n)}_{1} - a^{(n)}_{2}$
are unbounded.
\index{sequence!unbalanced}
\end{itemize}
The dynamical analysis is similar, but we restrict to the closed
subset $\EinTO$ of $\Pr(\RR^{3,2})$:

\begin{itemize}
\item 
No distortion corresponds to sequences $(g_{n})_{n \in \mathbb N}$
converging in $\SOh^{+}(3,2)$.
\item For balanced distortion,  the intersection between 
$\Pr(L)$ and $\EinTO$, and the intersection
between $\Pr(I)$ and $\EinTO$ are both photons. 
Hence the restriction of the singular projective transformation
$\bar{g}_{\infty}$ to $\EinTO$ is defined in the complement of a photon and 
takes value in a photon.
\item For unbalanced distortion, the singular projective transformation
$\bar{g}_{\infty}$ is defined in the complement of a lightcone and admits only 
one value.
\end{itemize}

\subsection{Maximal domains of properness}

Most of the time, applying directly Proposition~\ref{pro.projective
limit} and Corollary~\ref{cor.domaine propre} to a discrete subgroup
$\Gamma$ of $\Sp(4,\RR)$ or $\SOh^{+}(3,2)$ in order to find domains
where the action of $\Gamma$ is proper is far from optimal.

Through the morphism $\Sp(4,\RR) \to \SOh^{+}(3,2)$, a sequence in
$\Sp(4,\RR)$ can also be considered as a sequence in
$\SOh(3,2)$. Observe that our terminology is coherent: a sequence has
no distortion in $\Sp(4,\RR)$ if and only if it has no distortion in
$\SOh^{+}(3,2)$. Observe also that since 
\begin{align*}
a_{1} & = \alpha_{1} + \alpha_{2}, \\
a_{2} & = \alpha_{2} - \alpha_{1}, 
\end{align*}
a sequence with bounded distortion in $\Sp(4,\RR)$ is unbalanced in
$\SOh^{+}(3,2)$, and a sequence with balanced distortion in
$\SOh^{+}(3,2)$ is unbounded in $\Sp(4,\RR)$.  In summary, we
distinguish three different kinds of non-converging dynamics, covering
all the possibilities:

\begin{definition}
A sequence $(g_{n})_{n \in \mathbb N}$ of elements of $\Sp(4,\RR)$
escaping from any compact subset  
in $\Sp(4,\RR)$ has:
\begin{itemize}
\item \emph{bounded distortion\/} if the coefficient $a_{2}^{(n)} =
\alpha_{2}^{(n)} - \alpha^{(n)}_{1}$ is bounded,
\item \emph{balanced distortion\/} if the coefficient
$\alpha_{2}^{(n)} = (a_{1}^{(n)} + a^{(n)}_{2})/2$ is bounded,
\item \emph{mixed distortion\/} if all the coefficients $a_{1}^{(n)}$,
$a_{2}^{(n)}$, $\alpha_{1}^{(n)}$, $\alpha_{2}^{(n)}$ are unbounded.
\index{sequence!mixed distortion}
\end{itemize}
\end{definition}

\subsubsection{Action on $\EinTO$}
The dynamical analysis can be refined in the mixed distortion case.
In \cite{Frances2}, C.\ Frances proved: 

\begin{proposition}
\label{pro.refineddynamic}
Let $(g_{n})_{n \in \mathbb N}$ be a sequence of elements of
$\SOh^{+}(3,2)$ with mixed distortion, such that the sequence
$(g_{n}/\| g_{n} \|_{\infty})_{n \in \mathbb N}$ converges to an
endomorphism $g_{\infty}$.  Then there are photons $\Delta^{-}$ and
$\Delta^{+}$ in $\EinTO$ such that, for any sequence $(p_{n})_{n \in
\mathbb N}$ in $\EinTO$ converging to an element of $\EinTO \setminus
\Delta^{-}$, all the accumulation points of $(g_{n}(p_{n}))_{n \in
\mathbb N}$ belong to $\Delta^{+}$.
\end{proposition}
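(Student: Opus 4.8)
The plan is to reduce everything to the diagonal part of a Cartan ($KAK$) decomposition and then to carry out a coordinate analysis near the kernel hyperplane that is finer than what Proposition~\ref{pro.projective limit} detects.

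First I would write $g_n = k_n a_n k_n'$ as in §\ref{sub.kak}, with $k_n, k_n'$ in the maximal compact $K$ and $a_n = A'(a_1^{(n)}, a_2^{(n)})$ in the closed Weyl chamber $a_1^{(n)} \ge a_2^{(n)} \ge 0$; after passing to a subsequence I may assume $k_n \to k_\infty$ and $k_n' \to k_\infty'$ in $K$. Since the normalized sequence converges and the distortion is mixed, the same subsequence satisfies $a_1^{(n)} \to \infty$, $a_2^{(n)} \to \infty$ and $a_1^{(n)} - a_2^{(n)} \to \infty$. In the basis $f_1,\dots,f_5$ of \eqref{eq:secondexteriorpowerbasis} the element $a_n$ is diagonal with eigenvalues $e^{a_1^{(n)}}, e^{a_2^{(n)}}, 1, e^{-a_2^{(n)}}, e^{-a_1^{(n)}}$, so $\|a_n\|_\infty = e^{a_1^{(n)}}$ and $a_n/\|a_n\|_\infty$ tends to the rank-one endomorphism with image $\RR f_1$ and kernel $\mathrm{span}(f_2,f_3,f_4,f_5)$.

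For the bilinear form of §\ref{sub.lie.homom} the planes $\mathrm{span}(f_1,f_2)$ and $\mathrm{span}(f_4,f_5)$ are totally isotropic, so they projectivize to photons $\phi^+$ and $\phi^-$ of $\EinTO$; since $K \subset \SOh^{+}(3,2)$ acts conformally and carries isotropic planes to isotropic planes, I set $\Delta^{+} := k_\infty(\phi^+)$ and $\Delta^{-} := (k_\infty')^{-1}(\phi^-)$, which are again photons. Writing $q_n := k_n'(p_n)$, joint continuity of the $K$-action gives $q_n \to k_\infty'(p) =: q$, and the hypothesis $p \in \EinTO \setminus \Delta^{-}$ is equivalent to $q \notin \phi^-$. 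Because $k_n \to k_\infty$ uniformly and $\EinTO$ is compact, every accumulation point of $g_n(p_n) = k_n(a_n q_n)$ is $k_\infty$ applied to an accumulation point of $a_n q_n$; hence it suffices to prove the Cartan statement: if $q_n \to q \in \EinTO \setminus \phi^-$ then every accumulation point of $a_n q_n$ lies in $\phi^+$.

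This Cartan statement I would prove by computing in homogeneous coordinates $q_n = [x_1^{(n)} : \dots : x_5^{(n)}]$. If $q_1 \neq 0$, dividing $a_n q_n$ by $e^{a_1^{(n)}} x_1^{(n)}$ sends the remaining four coordinates to $0$ (they carry factors $e^{a_2 - a_1}, e^{-a_1}, e^{-a_1 - a_2}, e^{-2a_1}$), so $a_n q_n \to [f_1] \in \phi^+$. The delicate case is $q \in L([f_5]) \setminus \phi^-$, that is $x_1^{(n)} \to 0$ but $x_2^{(n)} \to x_2 \neq 0$: dividing by $e^{a_2^{(n)}} x_2^{(n)}$ kills the third, fourth and fifth normalized coordinates (here one uses $a_2^{(n)} \to \infty$, which is exactly where mixed distortion enters), while the first becomes $e^{a_1^{(n)} - a_2^{(n)}}\, x_1^{(n)}/x_2^{(n)}$. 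The main obstacle is this last term: $a_1^{(n)} - a_2^{(n)} \to \infty$ competes against $x_1^{(n)} \to 0$, and the product may accumulate at any value $t \in [0,+\infty]$ according to the relative rates, so the accumulation points are the points $[t : 1 : 0 : 0 : 0]$, which sweep out all of $\phi^+ = \Pr(\mathrm{span}(f_1,f_2))$. This is precisely why the conclusion is valued in a photon rather than a point, and why the coarse Proposition~\ref{pro.projective limit}, which only controls compact sets disjoint from the full kernel hyperplane $\Pr(L)$, does not suffice; once one checks that this term stays in $[0,+\infty]$ and the others vanish, every accumulation point lies in $\phi^+$, completing the reduction.
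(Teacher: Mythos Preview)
The paper does not actually prove this proposition: it attributes the result to Frances and cites \cite{Frances2}, so there is no in-paper argument to compare against. Your approach---reduce via the $KAK$ decomposition of \S\ref{sub.kak} to the diagonal sequence $A'(a_1^{(n)},a_2^{(n)})$ and then do a homogeneous-coordinate analysis on $\EinTO$ in the basis $f_1,\dots,f_5$---is the natural one and is essentially correct. Your identification of $\phi^\pm$ as the projectivized isotropic planes $\mathrm{span}(f_1,f_2)$ and $\mathrm{span}(f_4,f_5)$ is right, and your case split is complete: on the nullcone, $q_1=q_2=0$ forces $q_3=0$ (from $2q_1q_5+2q_2q_4+q_3^2=0$), so $q\notin\phi^-$ really does mean $q_1\neq 0$ or $q_2\neq 0$.

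Two small points are worth tightening. First, in the delicate case the ratio $e^{a_1-a_2}\,x_1^{(n)}/x_2^{(n)}$ can accumulate anywhere in $\RR\cup\{\pm\infty\}$, not just $[0,+\infty]$; this is harmless since $[t:1:0:0:0]$ lies in $\phi^+$ for all such $t$, but your parenthetical should say so. Second, and more substantively, you pass to a subsequence to get $k_n\to k_\infty$, $k_n'\to k_\infty'$, and your photons $\Delta^\pm$ are then built from these limits. To conclude for the \emph{original} sequence you should remark that the singular values of $g_n$ are intrinsic, and that for large $n$ they are distinct (mixed distortion), so the $KAK$ ambiguity in $k_n$ lies in the centralizer of $a_n$ in $K$, which is diagonal and therefore preserves $\phi^+$; hence $k_n(\phi^+)$ is intrinsically attached to $g_n$ and any two convergent subsequences yield the same $\Delta^+$ (and similarly for $\Delta^-$). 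Alternatively, one reads the statement, as is common in this literature, as holding after extraction of a subsequence. Either way your argument goes through.
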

\noindent
As a corollary (\S{4.1} in \cite{Frances2}):

\begin{corollary}
\label{cor.domaine propre2}
Let $\Gamma$ be a \emph{discrete} subgroup of $\SOh^{+}(3,2)$. Let
$\Omega_{0}$ be the union of all open domains $U$ in $\EinTO$ such
that, for any accumulation point $g_{\infty}$, with kernel $L$ and image $I$, 
of a sequence
$(g_{n}/\|g_{n}\|_{\infty})_{n \in \mathbb N}$ with $g_{n} \in
\SOh^{+}(3,2)$:

\begin{itemize}

\item 
When $(g_{n})_{n \in \mathbb N}$ has balanced distortion, $U$ is
disjoint from the photons $\Pr(L) \cap \EinTO$ and $\Pr(I) \cap
\EinTO$;
 
\item 
When $(g_{n})_{n \in \mathbb N}$ has bounded distortion, $U$ is
disjoint from the lightcone $\Pr(L) \cap \EinTO$;
 
\item 
When $(g_{n})_{n \in \mathbb N}$ has mixed distortion, $U$ is disjoint
from the photons $\Delta_{-}$ and $\Delta_{+}$.

\end{itemize}
Then the action of $\Gamma$ on $\Omega_{0}$ is properly discontinuous.
\end{corollary}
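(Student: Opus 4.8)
The plan is to establish proper discontinuity on $\Omega_{0}$ by the sequence criterion, arguing by contradiction. Suppose $\Gamma$ fails to act properly discontinuously on $\Omega_{0}$. Then there are a compact set $C \subset \Omega_{0}$, an infinite sequence of distinct elements $\gamma_{n} \in \Gamma$, and points $x_{n} \in C$ with $\gamma_{n} x_{n} \in C$; since $\Gamma$ is discrete and the $\gamma_{n}$ are distinct, the sequence escapes every compact subset of $\SOh^{+}(3,2)$. Passing to subsequences, I would assume that $\gamma_{n}/\|\gamma_{n}\|_{\infty} \to g_{\infty}$ with $\operatorname{Ker}(g_{\infty}) = L$ and $\operatorname{Im}(g_{\infty}) = I$, that $x_{n} \to x \in C$ and $\gamma_{n} x_{n} \to y \in C$, and—using the $KAK$ bookkeeping of \S\ref{sub.kak}—that the distortion type (balanced, bounded, or mixed) is constant along the subsequence. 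Since $-1$ lies in the Weyl group of $\Sp(4,\RR)$, the torus part of $\gamma_{n}^{-1}$ has the same Weyl-chamber coordinates as that of $\gamma_{n}$, so the inverse sequence $\gamma_{n}^{-1}/\|\gamma_{n}^{-1}\|_{\infty} \to g^{-}_{\infty}$ has the \emph{same} distortion type, with kernel $L^{-} = \operatorname{Ker}(g^{-}_{\infty})$.

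Next I would exploit the structure of $\Omega_{0}$ as a union of good open sets. Cover the compact $C$ by finitely many of the open domains $U$ entering the definition of $\Omega_{0}$; each such $U$ avoids the prescribed bad sets for \emph{every} sequence in $\Gamma$, hence so does their union, and therefore so does $C$. Applying this to the sequence $\gamma_{n}$ at hand, $C$ is disjoint from $\Pr(L) \cap \EinTO$ in the bounded and balanced cases, additionally from $\Pr(I) \cap \EinTO$ in the balanced case, and from the photons $\Delta_{-}$ and $\Delta_{+}$ of Proposition~\ref{pro.refineddynamic} in the mixed case. Applying the same defining property to the inverse sequence $\gamma_{n}^{-1}$, the set $C$ is also disjoint from $\Pr(L^{-}) \cap \EinTO$.

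Then I would run the case analysis. In the \emph{balanced} case $x \notin \Pr(L)$, so Proposition~\ref{pro.projective limit} gives $y = \bar{g}_{\infty}(x) \in \Pr(I) \cap \EinTO$, contradicting $C \cap \Pr(I) = \emptyset$. In the \emph{mixed} case $x \in \EinTO \setminus \Delta_{-}$, so Proposition~\ref{pro.refineddynamic} forces the accumulation point $y$ into $\Delta_{+}$, contradicting $C \cap \Delta_{+} = \emptyset$. The \emph{bounded} case needs an extra step: here the defining condition discards only the kernel lightcone $\Pr(L) \cap \EinTO$, so Proposition~\ref{pro.projective limit} still yields $y = \bar{g}_{\infty}(x) \in \Pr(I) \cap \EinTO$, but I must independently exclude $y$ from $C$. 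This is supplied by the orthogonality relation $g^{-}_{\infty} \circ g_{\infty} = 0$ recorded after Corollary~\ref{cor.domaine propre}, which gives $I = \operatorname{Im}(g_{\infty}) \subseteq \operatorname{Ker}(g^{-}_{\infty}) = L^{-}$; since $C$ avoids $\Pr(L^{-}) \cap \EinTO$, it avoids $\Pr(I) \cap \EinTO \subseteq \Pr(L^{-}) \cap \EinTO$, so again $y \notin C$, a contradiction. In all three cases the contradiction proves proper discontinuity.

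The hard part will be precisely this asymmetry in the bounded-distortion case: unlike the balanced and mixed cases, whose bad sets are symmetric under inversion, the bounded condition removes only the kernel lightcone, so the argument is forced to pass to the inverse sequence and invoke the rank relation $\operatorname{Im}(g_{\infty}) \subseteq \operatorname{Ker}(g^{-}_{\infty})$ in order to control the image. A secondary bookkeeping obstacle is verifying that the distortion type and the identifications of the limit sets $\Pr(L)$, $\Pr(I)$, and $\Delta_{\pm}$ are genuinely stable under passage to subsequences and under inversion, which rests on the $\Sp(4,\RR)$–$\SOh^{+}(3,2)$ dictionary of \S\ref{sub.kak} and on the fact that $-1$ belongs to the Weyl group.
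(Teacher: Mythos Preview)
Your argument is correct. The paper does not actually supply its own proof of this corollary: it simply states the result and attributes it to \S4.1 of Frances~\cite{Frances2}. Your contradiction argument via the sequence criterion, together with the three-case analysis and the passage to the inverse sequence in the bounded case (using $\operatorname{Im}(g_\infty)\subseteq\operatorname{Ker}(g^{-}_\infty)$), is precisely the expected proof and fills in what the paper omits.
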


\noindent
Observe that the domain $\Omega_{0}$ is in general bigger than the
domain $\Omega$ appearing in Corollary~\ref{cor.domaine propre}.  An
interesting case is that in which $\Omega_{0}$ is obtained by
removing only photons:

\begin{proposition}[Frances~\cite{Frances2}]
\label{pro.frances4}
A discrete subgroup $\Gamma$ of $\SOh^{+}(3,2)$ 
does not contain sequences with bounded distortion
if and only if its action on $\Pr(\RR^{3,2}) \setminus \EinTO$ is
properly discontinuous.
\end{proposition}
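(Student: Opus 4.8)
The framework is the projective singular-limit analysis of \S\ref{sec.dynamic} applied to $E=\Rtt$, so that $\Pr(E)=\RP^4$ and the set to study is $\Omega:=\Pr(\Rtt)\setminus\EinTO$. The plan is to feed the $KAK$-normal form of \S\ref{sub.kak} into the properness criterion of Corollary~\ref{cor.domaine propre}, organised by the distortion trichotomy. Throughout, for an escaping sequence $(g_n)$ in $\SOh^+(3,2)$ I pass to a subsequence for which $g_n=k_na_nk_n'$ with $k_n\to k_\infty$, $k_n'\to k_\infty'$ and $a_n=A'(a_1^{(n)},a_2^{(n)})$, and I write $g_\infty$, with kernel $L$ and image $I$, for the associated projective limit, noting that in every regime $\Pr(I)\subset\EinTO$ (a single point in the unbalanced case, a photon in the balanced case).

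For the contrapositive of the forward implication, suppose $(g_n)$ has bounded distortion, so $a_1^{(n)}\to\infty$ while $a_2^{(n)}$ stays bounded. Then $\Pr(L)$ is a hyperplane meeting $\EinTO$ in a full lightcone, hence meeting $\Omega$ in a nonempty open piece of $\RP^3$. The decisive feature is that, $a_2^{(n)}$ being bounded, the restriction of $a_n$ to the ``middle'' weight directions converges along a further subsequence to an \emph{invertible} transformation, collapsing only the extreme weight line. I would use this to produce a spacelike set $S\subset\Pr(L)\cap\Omega$ whose $g_n$-images stay in a fixed compact subset of $\Omega$; concretely, the points $x_n=(k_n')^{-1}[f_3]$ have $g_nx_n=k_n[f_3]\to k_\infty[f_3]$, both limits spacelike. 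A compact $K\subset\Omega$ meeting $S$ and its images then carries infinitely many distinct $\gamma\in\Gamma$ with $\gamma K\cap K\neq\emptyset$, so the action on $\Omega$ is not properly discontinuous.

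For the reverse implication, assume $\Gamma$ has no bounded-distortion sequence, so every escaping sequence is balanced or mixed, and suppose for contradiction that some compact $K\subset\Omega$ meets infinitely many translates. Extract distinct $g_n$ with $x_n\to x$ in $K$ and $g_nx_n\to y$ in $K$, and the limit $g_\infty$. By Proposition~\ref{pro.projective limit}, if $x\notin\Pr(L)$ then $y=\bar g_\infty(x)\in\Pr(I)\subset\EinTO$, contradicting $y\in\Omega$; so $x\in\Pr(L)\cap\Omega$. I would then re-run the same alternative for the induced secondary limit of $(g_n)$ on the invariant subspace $L$, whose weights are again governed by $a_1^{(n)},a_2^{(n)}$: the hypothesis of no bounded distortion is what forces each block of this restriction to be strictly expanded or contracted rather than invertible, so the secondary image again lies in $\EinTO$ and, after descending the resulting weight filtration and invoking the refined accumulation statement of Proposition~\ref{pro.refineddynamic} (with its two photons $\Delta^{-}$ and $\Delta^{+}$), the images $g_nx_n$ can accumulate only in $\EinTO$ — contradicting $y\in\Omega$. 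Hence $K$ meets only finitely many translates and the action is proper.

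The main obstacle is exactly this last step: controlling the secondary dynamics of a balanced or mixed sequence on $\Pr(L)\cap\Omega$ and ruling out residual recurrence at a fixed ``middle weight'' direction. This is where the trichotomy is genuinely used, since bounded distortion is precisely the regime leaving an invertible block — hence a noncompact spacelike (or timelike) region of $\Omega$ that recurs — whereas the delicate claim to be established is that in the balanced and mixed regimes the recurrent set is squeezed onto photons contained in $\EinTO$. Making this filtration bookkeeping uniform over \emph{all} sequences of $\Gamma$, rather than over powers of a single element, via the $KAK$-normal form together with Proposition~\ref{pro.refineddynamic}, is the crux of the argument and the step most in need of care.
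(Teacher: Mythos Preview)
The paper does not supply a proof of this proposition; it is simply attributed to Frances~\cite{Frances2}. So there is no ``paper's own proof'' to compare against, and your attempt must be judged on its own merits.

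Your forward direction is fine: if $(g_n)$ has bounded distortion, the middle weight space is genuinely invertible in the limit, and your choice $x_n=(k_n')^{-1}[f_3]$ with $g_nx_n=k_n[f_3]$ exhibits a spacelike compact set meeting infinitely many translates.

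The reverse direction, however, has a genuine gap at exactly the point you flagged as ``the main obstacle,'' and the gap cannot be closed because the proposition as stated is false. Take $\Gamma=\langle A'(a_1,a_2)\rangle$ with $a_1>a_2>0$. Every escaping sequence in $\Gamma$ has $a_2^{(n)}=|m_n|a_2\to\infty$, so $\Gamma$ contains no bounded-distortion sequence; it consists entirely of mixed-distortion sequences. Yet $A'(a_1,a_2)$ fixes $[f_3]$, and in the anti-diagonal form of \S\ref{sub.lie.homom} one has $\BB(f_3,f_3)=1>0$, so $[f_3]\in\Pr(\RR^{3,2})\setminus\EinTO$. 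An infinite cyclic group with a global fixed point in $\Omega$ does not act properly discontinuously there. Your claim that ``the hypothesis of no bounded distortion forces each block to be strictly expanded or contracted rather than invertible, so the secondary image again lies in $\EinTO$'' fails precisely here: the weight-$1$ direction $f_3$ is neither expanded nor contracted at any level of the filtration, and it is spacelike, not null. The descent through the weight filtration will always terminate at $[f_3]\notin\EinTO$.

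The likely resolution is that Frances' actual statement concerns only the timelike lines in $\Pr(\RR^{3,2})$ (the Klein model of anti-de~Sitter space), where the Cartan subgroup has no fixed point; on that locus your filtration argument does go through, since any timelike vector must have a nonzero component along one of the nontrivial weight directions $f_1,f_2,f_4,f_5$. The inclusion of the spacelike component --- where every regular semisimple element has a fixed point --- makes the biconditional false as written.
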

\noindent
Frances calls such a subgroup a {\em of the first kind.\/}
The following suggests that the domain $\Omega_{0}$ is
optimal. 

\begin{proposition}[Frances~\cite{Frances2}]
\label{pro.frances6}
Let $\Gamma$ be a discrete, Zariski dense subgroup of $\SOh^{+}(3,2)$
which does not contain sequences with bounded distortion.
Then $\Omega_{0}$ is the unique maximal open subset of $\EinTO$ 
on which $\Gamma$ acts properly.
\end{proposition}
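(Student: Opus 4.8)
The plan is to prove the two halves of the assertion separately: that $\Gamma$ acts properly on $\Omega_{0}$, and that no open set meeting the complement $\Lambda := \EinTO \setminus \Omega_{0}$ can carry a proper action. The first half is precisely Corollary~\ref{cor.domaine propre2}, keeping in mind that for a group without bounded distortion the only surviving divergent behaviours are balanced and mixed distortion, so $\Omega_{0}$ is obtained by deleting \emph{only photons}. Thus $\Lambda$ is the closure of the union of the image and kernel photons $\Pr(I)\cap\EinTO$ and $\Pr(L)\cap\EinTO$ of balanced sequences, together with the photons $\Delta^{+}, \Delta^{-}$ of Proposition~\ref{pro.refineddynamic} for mixed sequences. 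It remains to prove maximality: every open $U$ on which $\Gamma$ acts properly satisfies $U\subseteq\Omega_{0}$.

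Maximality reduces to showing that every point of $\Lambda$ is a \emph{non-proper point}, meaning it admits no neighborhood with proper $\Gamma$-action. The set of proper points is open (a neighborhood witnessing properness of $p$ witnesses it for each of its points), so the non-proper points form a closed set; since $\Lambda$ is by construction the closure of the union of the singular photons, it suffices to treat $p$ lying on one such photon. Passing to the inverse sequence if necessary, I may assume $p$ lies on the \emph{attracting} photon of a divergent sequence $(g_{n})$ of balanced or mixed type, and I suppose toward a contradiction that $\Gamma$ acts properly on some open $U\ni p$.

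First I would record what the trichotomy yields for $(g_{n})$: in the balanced case, and in the mixed case by Proposition~\ref{pro.refineddynamic}, the sequence collapses the complement of a photon uniformly onto the attracting photon through $p$, while the inverse sequence collapses the complement of the attracting photon onto the repelling one. The difficulty is that the crude projective limit of Proposition~\ref{pro.projective limit} only controls accumulation \emph{somewhere} along a photon, not at the specific point $p$, and bridging this gap is exactly where Zariski density enters. I would invoke the standard consequences of Zariski density for a rank-two semisimple group (minimality of the limit set and density of the attracting fixed points of proximal elements, cf.\ \cite{Frances2}): the limit set of $\Gamma$ in $\EinTO$ is the unique minimal closed invariant set, it coincides with $\Lambda$, and the attracting fixed photons of $\Gamma$ are dense among the photons comprising $\Lambda$. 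Consequently one may select a proximal $\gamma\in\Gamma$ whose attracting photon passes arbitrarily close to $p$ and whose repelling set is disjoint from a fixed small neighborhood $U'\subseteq U$ of $p$; here the absence of bounded distortion and the photon (rather than lightcone) nature of the singular sets are essential, since they ensure the repelling set is a photon that can be separated from $U'$.

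With such a $\gamma$ in hand, North--South dynamics finishes the argument: the iterates $\gamma^{k}$ converge uniformly on compacta of the complement of the repelling photon to the collapse onto the attracting photon, which meets $U'$. Hence for a suitable compact $K\subset U'$ one has $\gamma^{k}(K)\cap K\neq\emptyset$ for infinitely many $k$, contradicting properness on $U$. Therefore $p\notin U$, every proper $U$ avoids $\Lambda$, and $\Omega_{0}$ is the unique maximal domain of properness. The main obstacle throughout is the bridging step of the third paragraph: deducing from Zariski density that the $\EinTO$-limit set is exactly $\Lambda$, with a dense set of attracting photons and transverse repelling photons — equivalently, that each predicted singular photon is genuinely realized by proximal dynamics of $\Gamma$ — which is precisely where the hypotheses of the proposition are used in full.
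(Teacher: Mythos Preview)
The paper does not supply a proof of this proposition: it is stated with attribution to Frances~\cite{Frances2} and no argument is given in the text. So there is no ``paper's own proof'' to compare your proposal against.

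That said, a brief remark on your outline. The first half (properness on $\Omega_{0}$ via Corollary~\ref{cor.domaine propre2}) is fine and is exactly how the paper sets things up. For maximality, your skeleton---reduce to non-properness at points of the singular photons, then exploit Zariski density to produce elements whose attractor meets any prescribed neighborhood---is the natural strategy and is in the spirit of \cite{Frances2}. You are right to flag the ``bridging step'' as the crux: the claims that the limit set coincides with $\Lambda$, that attracting photons are dense in it, and that one can arrange the repelling photon to miss a given neighborhood, are all nontrivial consequences of Zariski density in rank two and require the machinery developed in \cite{Frances2}. There is also a subtlety you skate over in the last step: knowing that $\gamma^{k}$ collapses onto an attracting photon $\phi^{+}$ that meets $U'$ does not by itself guarantee $\gamma^{k}(K)\cap K\neq\emptyset$ for $K\subset U'$, since the limit map along $\phi^{+}$ need not land in $U'$; one typically fixes this by arranging that the attracting \emph{fixed point} (not merely the photon) lies in $U'$, or by using minimality of the limit set to find orbit points accumulating at $p$ itself. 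As written, your argument is a plausible outline rather than a proof, which you yourself acknowledge; filling it in would essentially reproduce the relevant portion of \cite{Frances2}.
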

\index{discrete subgroup!of the first kind} 
%
%
%
%
%
%
%
%

\subsubsection{Action on $\Pr(V)$}

A similar analysis should be done when $\Gamma$ is considered a
discrete subgroup of $\Sp(4,\RR)$ instead of $\SOh^+(3,2)$. 
The following proposition is analogous to Proposition~\ref{pro.refineddynamic}:

\begin{proposition}
\label{pro.refineddynamic2}
Let $(g_{n})_{n \in \mathbb N}$ be a sequence of elements of $\Sp(V)$
with mixed distortion, such that the sequence $(g_{n}/\| g_{n}
\|_{\infty})_{n \in \mathbb N}$ converges to an endomorphism
$g_{\infty}$ of $V$.  Then there are contact projective lines
$\Delta^{-}$ and $\Delta^{+}$ in $\Pr(V)$ such that, for any sequence
\begin{equation*}
(p_{n})_{n \in \mathbb N}\in \Pr(V) 
\end{equation*}
 converging to an element of
$\Pr(V) \setminus \Delta^{-}$, 
all the accumulation points of
$(g_{n}(p_{n}))_{n \in \mathbb N}$ belong to $\Delta^{+}$.
\end{proposition}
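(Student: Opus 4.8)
The plan is to reduce everything to the Cartan factor via the $KAK$-decomposition of \S\ref{sub.kak}, where mixed distortion takes the transparent diagonal form of \S\ref{sub.weylgroup}. First I would write $g_n = k_n a_n k'_n$ with $k_n,k'_n$ in the maximal compact $K=\Sp(4,\RR)\cap\Oh(4)\cong\UU(2)$ and $a_n=A(\alpha_1^{(n)},\alpha_2^{(n)})=\operatorname{diag}(e^{\alpha_1},e^{-\alpha_1},e^{\alpha_2},e^{-\alpha_2})$ with $\alpha_2^{(n)}>\alpha_1^{(n)}>0$, so that $\|a_n\|_\infty=e^{\alpha_2^{(n)}}$. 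After extracting a subsequence — the form in which the statement is applied in Corollary~\ref{cor.domaine propre2} — we may assume the compact factors converge, $k_n\to k_\infty$ and $k'_n\to k'_\infty$. Mixed distortion means $\alpha_1^{(n)}$, $\alpha_2^{(n)}$, and $\alpha_2^{(n)}-\alpha_1^{(n)}$ all tend to $+\infty$. Since $\operatorname{span}\{e_1,e_3\}$ and $\operatorname{span}\{e_2,e_4\}$ are Lagrangian (indeed $\omega(e_1,e_3)=\omega(e_2,e_4)=0$), their projectivizations are contact projective lines by \S\ref{sec:polarity}; these are exactly the lines joining the attracting point $[e_3]$ to the codimension-one attracting point $[e_1]$, and the repelling point $[e_4]$ to the codimension-one repelling point $[e_2]$, in the dynamical quadruple of \S\ref{sub.weylgroup}. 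I would then set
\begin{equation*}
\Delta^+ := k_\infty\,\Pr(\operatorname{span}\{e_1,e_3\}), \qquad
\Delta^- := (k'_\infty)^{-1}\,\Pr(\operatorname{span}\{e_2,e_4\}),
\end{equation*}
which are again contact projective lines because $k_\infty,(k'_\infty)^{-1}\in K\subset\Sp(4,\RR)$ preserve $\omega$ and hence carry Lagrangian planes to Lagrangian planes.

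The heart of the argument is the elementary estimate for the diagonal factor alone: if $q_n\to q$ in $\Pr(V)$ with $q\notin\Pr(\operatorname{span}\{e_2,e_4\})$, then every accumulation point of $a_n q_n$ lies in $\Pr(\operatorname{span}\{e_1,e_3\})$. To see this, choose unit representatives $q_n=(x_1^{(n)},\dots,x_4^{(n)})\to q$, so that $a_n q_n=(e^{\alpha_1}x_1^{(n)},\,e^{-\alpha_1}x_2^{(n)},\,e^{\alpha_2}x_3^{(n)},\,e^{-\alpha_2}x_4^{(n)})$. The hypothesis says $\liminf|x_1^{(n)}|>0$ or $\liminf|x_3^{(n)}|>0$, so the larger of the first and third coordinates has modulus $\geq c\,e^{\alpha_1^{(n)}}$ for some $c>0$ and all large $n$ (using $e^{\alpha_2}\geq e^{\alpha_1}$). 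Dividing $a_n q_n$ by that coordinate, the second component is $O(e^{-2\alpha_1^{(n)}})\to 0$ and the fourth is $O(e^{-\alpha_1^{(n)}-\alpha_2^{(n)}})\to 0$, precisely because mixed distortion forces $\alpha_1^{(n)}\to\infty$ and $\alpha_1^{(n)}+\alpha_2^{(n)}\to\infty$. Hence the $e_2$- and $e_4$-components of every accumulation point vanish, which is the claim.

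To finish, put $q_n:=k'_n p_n$. If $p_n\to p\notin\Delta^-$ then $q_n\to k'_\infty p\notin\Pr(\operatorname{span}\{e_2,e_4\})$, so by the estimate every accumulation point of $a_n q_n$ lies in $\Pr(\operatorname{span}\{e_1,e_3\})$; applying the uniformly convergent projective transformations $k_n\to k_\infty$ then places every accumulation point of $g_n p_n=k_n(a_n q_n)$ in $\Delta^+$. The only genuinely delicate point is conceptual rather than computational: in mixed distortion the naive limit $g_\infty$ has rank one, so its image is a single point and the codimension-one attracting direction $[e_1]$ — which determines $\Delta^+$ — is invisible in $g_\infty$ itself. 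One must therefore track genuine sequences $p_n\to p$ rather than a fixed point, and retain the second singular direction through the $KAK$-data; this is also why $\Delta^\pm$ are attached to the sequence via $k_\infty,k'_\infty$ rather than to $g_\infty$ alone, and why the subsequence reduction is needed. Alternatively, the statement can be deduced from its $\SOh(3,2)$-counterpart, Proposition~\ref{pro.refineddynamic}, by transporting along the local isomorphism \eqref{eq:secondexteriorpowersymplectic}, but the direct $KAK$ computation above is cleaner for $\Pr(V)$.
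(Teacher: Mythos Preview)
The paper does not actually prove this proposition: it is stated as the $\Sp(4,\RR)$-analogue of Frances' Proposition~\ref{pro.refineddynamic} and left at that. Your argument via the $KAK$-decomposition is precisely the route the paper's \S\ref{sub.kak} sets up for such statements, and the core diagonal estimate---that if $q_n\to q\notin\Pr(\operatorname{span}\{e_2,e_4\})$ then the $e_2$- and $e_4$-components of $a_nq_n$ die because $\alpha_1^{(n)}\to\infty$ and $\alpha_1^{(n)}+\alpha_2^{(n)}\to\infty$---is correct and cleanly executed. Your identification of $\operatorname{span}\{e_1,e_3\}$ and $\operatorname{span}\{e_2,e_4\}$ as Lagrangian, hence projectivizing to contact lines, is also right and is the symplectic content of the statement.

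The one genuine loose end is the subsequence issue you flag at the end. The hypothesis that $g_n/\|g_n\|_\infty\to g_\infty$ does \emph{not} force the compact factors $k_n,k'_n$ to converge: in the mixed case $a_\infty$ has rank one, so $g_\infty=k_\infty a_\infty k'_\infty$ determines only $k_\infty[e_3]$ and $k'_\infty{}^{-1}[e_4^\perp]$, not the full $\Delta^\pm$. Different subsequential limits of $(k_n,k'_n)$ could a priori produce different contact lines $\Delta^+$, and your proof as written only establishes the conclusion along one such subsequence. To prove the proposition as stated you would need to show either that all subsequential $\Delta^+$ coincide, or---more honestly---read the statement as holding after a preliminary extraction, which is how it is used downstream and how the parallel $\SOh(3,2)$ result is deployed in Corollary~\ref{cor.domaine propre2}. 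You are right to call this out; it is a wrinkle in the statement itself rather than in your method.
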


We can then define a subset $\Omega_1$ of $\Pr(V)$ as the interior of
the subset obtained after removing limit contact projective lines
associated to subsequences of $\Gamma$ with bounded or mixed
distortion, and removing projective hyperplanes associated to
subsequences with balanced distortion. Then it is easy to prove that
the action of $\Gamma$ on $\Omega_1$ is properly discontinuous.

An interesting case is that in which we remove only projective lines,
and no hypersurfaces---the case in which $\Gamma$ has no subsequence with
balanced distortion. Frances calls such $\Gamma$ {\em groups of the
second kind.\/} The following questions arise from comparison with
Propositions \ref{pro.frances6} and \ref{pro.frances4}:

\index{discrete subgroup!of the second kind}
\medskip
\noindent \emph{Question:\/} Can groups of the second kind be defined
as groups acting properly on some associated space?

\medskip
\noindent \emph{Question:\/} Is $\Omega_{1}$ the unique maximal open
subset of $\Pr(V)$ on which the action of $\Gamma$ is proper, at least
if $\Gamma$ is Zariski dense?

\subsubsection{Action on the flag manifold}
Now consider the action of $\Sp(4,\RR)$ on the flag manifold $\Flag$.
Let $v,w \in V$ be such that $\omega(v,w) = 0$, so $v$ and $w$ span a Lagrangian plane.  Let 
\begin{align*}
\Flag &\xrightarrow{\rho_1} \Pho \\
\Flag &\xrightarrow{\rho_2} \EinTO
\end{align*}
be the natural projections.  Let $g_n$ be a sequence in
$\Sp(4,\RR)$ diverging to infinity with mixed distortion.
We invite the reader to verify the following statements:

\begin{itemize}
\item{ There are a flag $q^+ \in \Flag$ and points
$[v] \in \Pr(V)$ and $z \in \EinTO$ such that, on the complement of
\begin{equation*}
\rho_1^{-1}([v^\perp]) \cup \rho_2^{-1}(L(z))  
\end{equation*}
the sequence $g_n$ converges
uniformly to the constant map $q^+$.}

\item{There are contact projective lines
$\alpha^+$,$\alpha^-$ in $\Pr(V)$ and photons $\beta^+$,$\beta^-$ in
$\EinTO$ such that, on the complement of 
\begin{equation*}
\rho_1^{-1}(\alpha^-) \cup \rho_2^{-1}(\beta^-)
\end{equation*}
all accumulation points of $g_n$ lie in
\begin{equation*}
\rho_1^{-1}(\alpha^+) \cap \rho_2^{-1}(\beta^+) .
\end{equation*}
This intersection is homeomorphic to a wedge of two circles.}
\end{itemize}

\section{Crooked surfaces}\label{sec:crooked}
\label{sec.crooked}

Crooked planes were introduced by Drumm~\cite{drumm1,drumm2,drummgoldman} to investigate discrete groups of Lorentzian transformations which act freely and properly on $\Eto$. He used crooked planes to construct
fundamental polyhedra for such actions; they play a role analogous to equidistant surfaces bounding Dirichlet fundamental domains in Hadamard manifolds.  This section discusses the conformal compactification of a crooked plane and its automorphisms.  \index{crooked!plane}

\subsection{Crooked planes in Minkowski space}
For a detailed description of crooked planes, see
Drumm-Goldman~\cite{drummgoldman}. We quickly summarize the basic
results here.

Consider $\Eto$ with the Lorentz metric from the inner product $I_2 \oplus - I_1$ on $\RR^{2,1}$. 
A crooked plane $C$ is a surface in $\Eto$ that divides $\Eto$ into
two cells, called {\em crooked half-spaces.\/} It is a piecewise linear
surface composed of four $2$-dimensional faces, joined along four rays,
which all meet at a point $p$, called the {\em vertex.\/}
The four rays have endpoint $p$, and form two lightlike geodesics, 
which we denote $\ell_1$ and $\ell_2$. Two of the faces are null half-planes
$\Ww_1$ and $\Ww_2$, bounded by $\ell_1$ and $\ell_2$ respectively, 
which we call
 {\em wings.\/} \index{wing}
The two remaining faces consist of the intersection
between $J^\pm(p)$ and the timelike plane $P$ containing $\ell_1$ and $\ell_2$; their union is the {\em stem\/} of $C$.\index{stem} 
The timelike plane $P$ is the orthogonal complement of a unique spacelike line
$P^\perp(p)$ containing $p$, called the {\em spine\/} of $C$.

To define a crooked plane, we first define the wings, stem, and spine.  A lightlike geodesic $\ell = p + \RR v$ lies in a unique null plane $\ell^\perp$
(\S \ref{sub.Minkowski space}). The ambient orientation of $\RR^{2,1}$
distinguishes a component of $\ell^\perp\setminus\ell$ as follows.
Let $u\in \RR^{2,1}$ be a timelike vector such that $\langle u,v \rangle < 0$.
Then each component of $\ell^\perp\setminus\ell$ defined by
\begin{align*}
\Ww^+(\ell) & := \left\{ p + w \in \ell^\perp \mid \det(u,v,w) > 0 \right\}  \\
\Ww^-(\ell) & := \left\{ p + w \in \ell^\perp \mid \det(u,v,w) < 0 \right\}
\end{align*}
is   independent  of the choices  above.  In  particular, every
orientation-preserving isometry $f$ of $\Eto$ maps 
\begin{align*}
\Ww^+(\ell)&\longrightarrow\Ww^+(f(\ell))\\
\Ww^-(\ell)& \longrightarrow \Ww^-(f(\ell))
\end{align*}
and every orientation-reversing isometry $f$ maps
\begin{align*}
\Ww^+(\ell)&\longrightarrow\Ww^-(f(\ell))\\
\Ww^-(\ell)& \longrightarrow \Ww^+(f(\ell)) .
\end{align*}
Given two lightlike geodesics $\ell_1,\ell_2$ containing $p$, the
stem is defined as 
\begin{equation*}
\Ss(\ell_1,\ell_2) := \JJ^{\pm}(p) \cap (p + \operatorname{span}\{ \ell_1 - p,\ell_2- p \}).
\end{equation*}
The spine is
\begin{equation*}
\sigma = p + (\Ss(\ell_1,\ell_2) - p)^\perp .
\end{equation*}
Compare Drumm-Goldman~\cite{drummgoldman}.

The {\em positively-oriented crooked plane\/} with vertex $p$ and stem
$\Ss( \ell_1 , \ell_2 )$ is the union
\index{crooked!plane!positively-oriented}
\begin{equation*}
\Ww^+(\ell_1) \cup \Ss(\ell_1,\ell_2) \cup  \Ww^+(\ell_2) .
\end{equation*}
Similarly, the {\em negatively-oriented crooked plane\/} with vertex
$p$ and stem $\Ss( \ell_1, \ell_2)$ is
\index{crooked!plane!negatively-oriented}
\begin{equation*}
\Ww^-(\ell_1) \cup \Ss(\ell_1,\ell_2) \cup  \Ww^-(\ell_2) .
\end{equation*}
Given an orientation on $\Eto$, a positively-oriented crooked plane is determined by its vertex and its spine.  Conversely, every point $p$ and spacelike line $\sigma$ containing $p$
determines a unique positively- or negatively-oriented crooked plane.

A crooked plane $C$ is homeomorphic to $\RR^2$, and the complement
$\Eto \setminus C$ consists of two components, each homeomorphic
to $\RR^3$.  The components of the complement of a crooked plane are called
{\em open crooked half-spaces\/} and their closures {\em closed crooked half-spaces.\/}
The spine of $C$ is the unique spacelike line contained in $C$.

\subsection{An example}\label{sec:crookedexample}
Here is an example of a crooked plane with vertex the origin and
spine the $x$-axis:
\begin{equation*}
p = \bmatrix 0 \\ 0 \\ 0 \endbmatrix, \ 
\sigma = \RR  \bmatrix 1 \\ 0 \\ 0 \endbmatrix .
\end{equation*}
The lightlike geodesics are
\begin{equation*}
\ell_1 = \RR  \bmatrix 0 \\ -1 \\ 1 \endbmatrix, \qquad
\ell_2 = \RR  \bmatrix 0 \\ 1 \\ 1 \endbmatrix ,
\end{equation*}
the stem is
\begin{equation*}
\left\{ \bmatrix 0 \\ y \\ z \endbmatrix \ : \  
y^2 - z^2 \le 0  \right\}
\end{equation*}
and the wings are
\begin{align*}
\Ww_1 & =  
\left\{ \bmatrix x \\ y \\ -y \endbmatrix \ : \  
x \ge 0, y\in\RR \right\} \\
\Ww_2 & =  
\left\{ \bmatrix x \\ y \\ y \endbmatrix \ : \  
x \le 0, y\in\RR \right\} .
\end{align*}
The identity component of $\operatorname{Isom}(\Eto)$ acts 
transitively on the space of pairs of vertices and unit spacelike vectors, 
so it is transitive on positively-oriented and negatively-oriented crooked planes.
An orientation-reversing isometry exchanges positively- and negatively-oriented crooked planes, so $\operatorname{Isom}(\Eto)$ acts transitively on the set of all crooked planes.

\subsection{Topology of a crooked surface}
The closures of crooked planes in Minkowski patches are 
{\em crooked surfaces.\/} These were studied in Frances~\cite{Frances1}.
In this section we describe the topology of a crooked surface.
\index{crooked!surface}

Let $C\subset \Eto$ be a crooked plane. 
\begin{theorem} 
The closure $\overline{C}\in\EinTO$ is a topological submanifold
homeomorphic to a Klein bottle. The lift of
$\overline{C}$ to the double covering $\uEinTO$ is the oriented double
covering of $\overline{C}$ and is homeomorphic to a torus.
\end{theorem}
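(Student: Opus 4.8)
The plan is to reduce to a single normal-form crooked plane, compute the closure $\overline C$ piece by piece using the results of \S\ref{sec.confcomp} on closures of planes, organize $\overline C$ as a finite CW complex, verify by a link computation that it is a closed surface, and finally read off the homeomorphism type together with the behavior in the double cover. First I would use the transitivity of $\operatorname{Isom}(\Eto)$ on crooked planes (\S\ref{sec:crookedexample}): since every Minkowski isometry extends to a conformal automorphism of $\EinTO$, the homeomorphism type of $\overline C$ is independent of $C$, so it suffices to treat the explicit crooked plane of \S\ref{sec:crookedexample}, with vertex $p_0$ the origin, wings $\Ww_1,\Ww_2$ in the null planes containing $\ell_1,\ell_2$, and stem $\Ss$ in the timelike plane $P=\{x=0\}$.

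Next I would compute the frontier $\overline C\setminus C$. Each wing lies in a null plane, whose closure is by \S\ref{sec.confcomp} a lightcone whose frontier is an ideal photon; call it $\phi_i$, the photon through $p_\infty$ and the ideal endpoint $q_i\in S_\infty$ of $\ell_i$. The stem lies in the timelike plane $P$, whose closure is an Einstein hypersphere $H\cong\EinOO$; inside the torus $H$ the four photons $\overline{\ell_1},\overline{\ell_2},\phi_1,\phi_2$ meet in exactly the four points $p_0,p_\infty,q_1,q_2$ and thus form a stem configuration in the sense of \S2.4, so that $\overline{\Ss}$ is the union of two opposite quadrilateral regions $Q^+,Q^-$. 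A direct computation in homogeneous coordinates (using the paraboloid embedding $x\mapsto[x:\langle x,x\rangle:1]$ and rescaling as $x\to\infty$) confirms that the only limit points added are those of $\phi_1\cup\phi_2$, and that $\phi_1\cap\phi_2=\{p_\infty\}$.

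I would then build a CW structure on $\overline C$ with $0$-cells $p_0,p_\infty,q_1,q_2$; eight $1$-cells, namely the arcs into which $\{p_0,q_i\}$ and $\{p_\infty,q_i\}$ cut the four photons $\overline{\ell_1},\overline{\ell_2},\phi_1,\phi_2$; and four $2$-cells, the stem quadrilaterals $Q^+,Q^-$ together with the two wings $\Ww_1,\Ww_2$. This gives $\chi(\overline C)=4-8+4=0$, so once $\overline C$ is known to be a closed surface it must be a torus or a Klein bottle. The main obstacle is precisely the closed-surface property: one must show that the link of every vertex is a single circle. The serious case is $p_\infty$, where all four $2$-cells accumulate (and the points $q_1,q_2$, where the wing closures are a priori singular, each a pair of half-disks meeting at the ideal point). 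Combinatorially the four sectors at $p_\infty$ can be made to close up cyclically, $Q^+\!-\!\Ww_2\!-\!Q^-\!-\!\Ww_1\!-\!Q^+$, into one disk; establishing this rigorously amounts to determining on which side of each ideal photon every $2$-cell lies near $p_\infty,q_1,q_2$, and this local analysis of the closures at the ideal points is the heart of the argument.

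Granting the link computation, $\overline C$ is a closed surface with $\chi=0$. To pin down the homeomorphism type and the double cover simultaneously, I would examine the closed spine: the spine is a spacelike geodesic, so by \S\ref{sec.confcomp} its closure $\overline\sigma$ is a spacelike circle through $p_0$ and $p_\infty$, lying in $\Ww_1\cup\Ww_2$. I expect that a neighborhood of $\overline\sigma$ in $\overline C$ is a M\"obius band (the ``crookedness'' of the wings reverses the transverse framing), which forces $\overline C$ to be non-orientable, hence the Klein bottle. For the double cover I would run the parallel analysis upstairs: the preimage $\widehat C\subset\uEinTO$ is invariant under the free antipodal deck transformation, the lifted spine has an annulus (rather than M\"obius) neighborhood so $\widehat C$ is orientable, and $\chi(\widehat C)=2\chi(\overline C)=0$, making $\widehat C$ a torus; since the deck transformation acts freely and reverses the orientation of $\widehat C$, the covering $\widehat C\to\overline C$ is exactly the orientation double cover, completing the statement.
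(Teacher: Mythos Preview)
Your overall strategy --- reduce to the standard example, compute the frontier via the closure results of \S\ref{sec.confcomp}, assemble a CW complex with $4$ vertices, $8$ edges, $4$ faces, get $\chi=0$, then detect non-orientability --- matches the paper's proof closely. The paper carries out exactly this plan.

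Two points of comparison. First, for the manifold check at the ideal points $q_1,q_2$ (the paper's $p_1,p_2$), which you correctly flag as the heart of the argument, the paper does not work combinatorially with links but instead passes to an explicit affine chart: introducing null coordinates $T=Y-Z$, $W=Y+Z$ and working in the patch $T\neq 0$, one sees directly that $\overline{\Ss}\cup\overline{\Ww_1}$ is, near $p_1$, the union of two half-planes meeting along a line in the $(\upsilon,\nu)$-coordinates. The inversion $\iota$ then handles $p_\infty$ and the interior points of the ideal photons. This is more concrete than a link computation and sidesteps the side-determination issues you anticipate.

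Second, your non-orientability argument differs from the paper's, and here I would urge caution. You propose to show the closed spine $\overline\sigma$ has a M\"obius neighborhood, but $\overline\sigma$ passes through $p_0$ and $p_\infty$, where all four $2$-cells meet; a tubular neighborhood of $\overline\sigma$ in $\overline C$ is not simply a strip inside $\overline{\Ww_1}\cup\overline{\Ww_2}$, and the claim that ``crookedness reverses the transverse framing'' needs real justification that you have not supplied. The paper takes a cleaner route: choose a photon in $\overline C$, say $\phi_1=\overline{\ell_1}$, and exhibit a second photon in $\overline C$ meeting it transversely in exactly one point --- namely the closure of a null geodesic in $\Ww_1$ parallel to $\ell_1$, which limits to the same ideal point $p_1\in\phi_1$. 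This gives self-intersection $\phi_1\cdot\phi_1=1$, forcing non-orientability. The argument is short and avoids the singular vertices entirely. Once non-orientability of $\overline C$ is established, your orientation-cover reasoning for the lift to $\uEinTO$ is fine.
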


\begin{proof}
Since the isometry group of Minkowski space acts transitively
on crooked planes, it suffices to consider the single crooked plane $C$
defined in \S\ref{sec:crookedexample}.

Recall the stratification of $\EinTO$ from \S \ref{sub.confeinstein}.
Write the nullcone $\Ntt$ of $\RR^{3,2}$ as
\begin{equation*}
\bmatrix X \\ Y \\ Z \\ U \\ V \endbmatrix \text{~where~}
X^2 + Y^2 - Z^2 - U V = 0. 
\end{equation*}
The homogeneous coordinates of points in the stem $\Ss(C)$ satisfy
\begin{equation*}
X = 0, \qquad  Y^2 - Z^2 \le 0, \qquad V \neq 0
\end{equation*}
and thus the closure of the stem $\overline{\Ss(C)}$ is defined by
(homogeneous) inequalities
\begin{equation*}
X = 0, \qquad  Y^2 - Z^2 \le 0.
\end{equation*}
The two lightlike geodesics 
\begin{equation*}
\ell_1 = \RR\bmatrix 0  \\ -1 \\ 1 \endbmatrix, \qquad
\ell_2 = \RR\bmatrix 0  \\ 1 \\ 1 \endbmatrix
\end{equation*}
defining $\Ss(C)$ extend to photons
$\phi_1, \phi_2$ with ideal points represented in homogeneous coordinates
\begin{equation*}
p_1 = \bmatrix 0 \\ -1 \\ 1 \\ 0 \\ 0 \endbmatrix, \qquad
p_2 = \bmatrix 0 \\ 1 \\ 1 \\ 0 \\ 0 \endbmatrix .
\end{equation*}
The closures of the corresponding wings $\Ww_1,\Ww_2$ are described in 
homogeneous coordinates by:
\begin{align*}
\overline{\Ww_1} &= \left\{ \bmatrix X \\ -Y \\ Y \\ U \\ V \endbmatrix
\;:\; X^2 - U V = 0, \quad X V \ge 0 \right\} \\
\overline{\Ww_2} &= \left\{ \bmatrix X \\ Y \\ Y \\ U \\ V \endbmatrix
\;:\; X^2 - U V = 0, \quad X V \le 0 \right\} .
\end{align*}
The closure of each wing intersects the ideal lightcone 
$L(p_\infty)$ (described by $V=0$) in the photons:
\begin{align*}
\psi_1 &= \left\{ \bmatrix 0 \\ -Y \\ Y \\ U \\ 0 \endbmatrix
\;:\; Y,U\in\RR \right\} \\
\psi_2 &= \left\{ \bmatrix 0 \\ Y \\ Y \\ U \\ 0 \endbmatrix
\;:\; Y,U\in\RR \right\} .
\end{align*}
Thus the crooked surface $\overline{C}$ decomposes into the following strata:
\begin{itemize}
\item four points in a stem configuration: the vertex $p_0$, the improper point $p_\infty$,
and the two ideal points $p_1$ and $p_2$;
\index{stem configuration}
\item eight line segments, the components of
\begin{align*}
\phi_1 & \setminus \{p_0,p_1\}  \\
\phi_2 & \setminus \{p_0,p_2\}  \\
\psi_1 & \setminus \{p_\infty,p_1\}  \\
\psi_2 & \setminus \{p_\infty,p_2\} ;
\end{align*}
\item two null-half planes, the interiors of the wings $\Ww_1,\Ww_2$;
\item the two components of the interior of the stem $\Ss$.
\end{itemize}
Recall that the inversion in the unit  sphere $\iota=\Id_3 \oplus \bmatrix 0 & 1 \\ 1 & 0 \endbmatrix$ fixes $p_1$ and $p_2$, and interchanges
$p_0$ and $p_\infty$. Moreover $\iota$ interchanges
$\phi_i$ with $\psi_i$, $i = 1,2$.
Finally $\iota$ leaves invariant the interior of each $\Ww_i$ 
and interchanges the two components of the interior of $\Ss$.

The original crooked plane equals 
\begin{equation*}
\{p_0\} \;\cup\; \phi_1\setminus\{p_1\}\;\cup\; \phi_2\setminus\{p_2\} \;\cup\;
\operatorname{int}(\Ww_1) \;\cup\; \operatorname{int}(\Ww_2) 
\;\cup\; \operatorname{int}(\Ss)
\end{equation*}
and is homeomorphic to $\RR^2$.  The homeomorphism is depicted
schematically in Figure~\ref{fig:crookedvertex}.  The interiors of
$\Ww_1,\Ww_2$, and $\Ss$ correspond to the four quadrants in $\RR^2$.  The
wing $\Ww_i$ is bounded by the two segments of $\phi_i$, whereas each
component of $\Ss$ is bounded by one segment of $\phi_1$ and one
segment of $\phi_2$. These four segments correspond to the four
coordinate rays in $\RR^2$. 

Now we can see that $C$ is a topological
manifold: points in
$\operatorname{int}(\Ww_1),\operatorname{int}(\Ww_2),$ or
$\operatorname{int}(\Ss)$ have coordinate neighborhoods in these
faces.  Interior points of the segments have two half-disc
neighborhoods, one from a wing and one from the stem. The vertex $p_0$
has four quarter-disc neighborhoods, one from each wing, and one from
each component of the stem. (See Figure~\ref{fig:crookedvertex}.)

\begin{figure}[ht]
\begin{center}
\includegraphics[width=3cm, height=3cm]{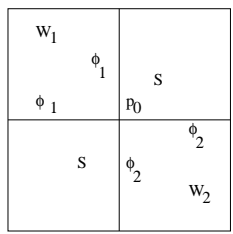}
\end{center}
\caption{Flattening a crooked plane around its vertex} \label{fig:crookedvertex}
\end{figure}

Coordinate charts for the improper point $p_\infty$ and
points in $\psi_i \setminus \{p_\infty,p_i\}$
are obtained by composing the above charts with the inversion $\iota$.
It remains to find coordinate charts near the ideal points $p_1,p_2$.
Consider first the case of $p_1$. The linear functionals on $\Rtt$
defined by
\begin{align*}
T & = Y - Z \\ W & = Y + Z  
\end{align*}
are null since the defining quadratic form factors:
\begin{equation*}
X^2 + Y^2 - Z^2 - U V = X^2 + T W - U V.
\end{equation*}
Working in the affine patch defined by $T \neq 0$ with inhomogeneous
coordinates
\begin{align*}
\xi & := \frac{X}{T} \\ 
\eta & := \frac{Y}{T} \\ 
\omega & := \frac{W}{T} \\ 
\upsilon & := \frac{U}{T} \\ 
\nu & := \frac{V}{T} 
\end{align*}
the nullcone is defined by:
\begin{equation*}
\xi^2 + \omega - \upsilon\nu = 0 
\end{equation*}
whence
\begin{equation*}
\omega = -\xi^2 +  \upsilon\nu
\end{equation*}
and $(\xi,\upsilon,\nu)\in\RR^3$ is a coordinate chart for this 
patch on $\EinTO$. 

In these coordinates, $p_1$ is the origin $(0,0,0)$, $\phi_1$ is the line
$\xi = \upsilon=0$, and $\psi_1$ is the line $\xi = \nu = 0$.
The wing $\Ww_2$ misses this patch, but both $\Ss$ and $\Ww_1$ intersect it.
In these coordinates $\Ss$ is defined by 
\begin{equation*}
\xi = 0, \quad \omega \le 0
\end{equation*}
and $\Ww_1$ is defined by 
\begin{equation*}
\xi \le 0, \quad \omega = 0 .
\end{equation*}
Since on $\Ww_1$
\begin{equation*}
\upsilon\nu = \xi^2 \ge 0
\end{equation*}
this portion of $\Ww_1$ in this patch has two components
\begin{align*}
\upsilon,\nu &< 0  \\
\upsilon,\nu &> 0 
\end{align*}
and the projection $(\upsilon,\nu)$ defines a coordinate chart
for a neighborhood of $p_1$. (Compare Figure~\ref{fig:crookedideal}.)

\begin{figure}[ht]
\begin{center}
\includegraphics[width=3cm, height=3cm]{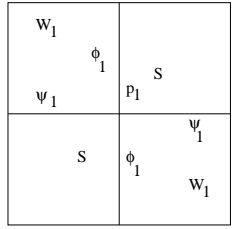}
\end{center}
\caption{Flattening a crooked surface around an ideal point $p_1$}  \label{fig:crookedideal}
\end{figure}

The case of $p_2$ is completely analogous.  It follows that $\overline{C}$ is a closed surface with cell decomposition
with four $0$-cells, eight $1$-cells and four $2$-cells. Therefore
\begin{equation*}
\chi(\overline{C}) = 4 - 8 + 4 = 0 
\end{equation*}
and $\overline{C}$ is homeomorphic to either a torus or a Klein bottle.

To see that $\overline{C}$ is nonorientable, consider a photon, for example
$\phi_1$. Parallel translate the null geodesic
$\phi_1\setminus\{p_1\}$ to a null geodesic $\ell$ lying on the wing
$\Ww_1$ and disjoint from $\phi_1\setminus\{p_1\}$. Its closure
$\bar{\ell} = \ell \cup \{p_1\}$ is a photon on
$\overline{\Ww}_1 \subset \overline{C}$ which intersects $\phi_1$
transversely with intersection number $1$. Thus the self-intersection number
\begin{equation*}
\phi_1 \cdot  \phi_1  = 1
\end{equation*}
so $\phi_1\subset \overline{C}$ is an orientation-reversing loop.
Thus $\overline{C}$ is nonorientable, and homeomorphic to a Klein bottle.
\end{proof}


Next we describe the stratification of a crooked surface in the double covering $\uEinTO$.  Recall from \S\ref{sec:improperpatch} that a Minkowski patch in $\uEinTO$ has both a spatial and a timelike improper point.  Let $C$ be a crooked plane of $\Eto$, embedded in a Minkowski patch $\Min^+(p_\infty)$, so $p_\infty = p^{\mathrm{ti}}_\infty$, the timelike improper point of this patch.  Denote by $p^{\mathrm{sp}}_\infty$ the spatial improper point.

The closure $\overline{C}$ of $C$ in $\uEinTO$ decomposes into the following strata:
\begin{itemize}
\item seven points: $p_0, p_\infty^{\mathrm{ti}}, p_\infty^{\mathrm{sp}}, p_1^\pm,p_2^\pm$;

\item twelve photon segments: 
\begin{align*}
\phi_i^\pm,  \   \mbox{connecting $p_0$  to $p_i^\pm$} \\
\alpha_i^\pm,  \  \mbox{connecting $p_\infty^{\mathrm{ti}}$ to $p_i^\pm$} \\
\beta_i^\pm, \   \mbox{connecting $p_\infty^{\mathfrak{sp}}$ to $p_i^\pm$} ;
\end{align*}

\item two null half-planes, the interiors of $\Ww_1$ and $\Ww_2$.  The wing $\Ww_i$ is bounded by the curves $\phi_i^\pm$ and $\beta_i^\pm$;

\item the two components of the interior of the stem $\Ss$.  The stem is bounded by the curves $\phi_i^\pm$ and $\alpha_i^\pm$, for $i=1,2$.
\end{itemize}
The saturation of $\overline{C}$ by the antipodal map on $\uEinTO$ is the lift of a crooked surface from $\EinTO$.  The interested reader can verify that it is homeomorphic to a torus.

\subsection{Automorphisms of a crooked surface}
\label{sub.crookedauto}
Let $C$ be the positively-oriented crooked plane of Section \ref{sec:crookedexample}, and $\overline{C}$ the associated crooked surface in $\EinTO$.  First, $C$ is invariant by all positive homotheties centered at the origin, because each of the wings and the stem are.  Second, it is invariant by the $1$-dimensional group of linear hyperbolic isometries of Minkowski space preserving the lightlike lines bounding the stem.  The subgroup $A$, which can be viewed as the subgroup of $\SOh(3,2)$ acting by positive homotheties and positive linear hyperbolic isometries of Minkowski space, then preserves $C$, and hence $\overline{C}$.  The element

$$ s_0 = \left( \begin{array}{ccccc}
1 &  &  &  &  \\
  & -1 &  &   & \\
 &   &  -1 &  &  \\
  &    &  & 1 &   \\
 &  &  &  &  1
\end{array}
\right)
$$
is a reflection in the spine, and also preserves $\overline{C}$. Note that $s_0$ is time-reversing.  Then we have 
$$ \ZZ_2 \ltimes A \cong \ZZ_2 \ltimes (\RR_{>0}^*)^2 \subset \mbox{Aut}(\overline{C}) .$$

Next let $\ell_1, \ell_2$ be the two lightlike geodesics bounding the stem (alternatively bounding the wings) of $C$.  As above, the inversion $\iota$ leaves invariant $C \setminus (\ell_1\cup\ell_2 )$.  In fact, the element

$$ s_1 = \left( \begin{array}{ccccc}
-1   &    &   &   &   \\
   & -1 &  &    &   \\
   &    & -1 &  & \\
  &   &  &   & 1 \\
  &  &  &  1 &  
\end{array}
\right)
$$ 
is an automorphism of $\overline{C}$.  The involution
$$ s_2 = \left( \begin{array}{ccccc}
-1 &    &    &    &     \\
   & 1  &    &    &     \\
   &    & -1 &    &     \\
   &    &    & 1  &    \\
   &    &   &     & 1
\end{array}
\right)
$$
also preserves $\overline{C}$ and exchanges the ideal points $p_1$ and $p_2$.  The involutions $s_0, s_1,$ and $s_2$ pairwise commute, and each product is also an involution, so we have 
$$ G := \ZZ_2^3 \ltimes (\RR_{>0}^*)^2 \subset \mbox{Aut}(\overline{C})$$

To any crooked surface can be associated a quadruple of points in a stem configuration. \index{stem configuration} The stabilizer of a stem configuration in $\SOh(3,2) \cong \operatorname{PO}(3,2)$ is $N(A)$, the normalizer of a Cartan subgroup $A$.  Suppose that the points $(p_0,p_1,p_2, p_\infty)$ are associated to $\overline{C}$.  As above, a neighborhood of $p_0$ in $\overline{C}$ is not diffeomorphic to a neighborhood of $p_1$ in $\overline{C}$, so any automorphism must in fact belong to the subgroup $N'(A)$ preserving each pair $\{ p_0,p_\infty \}$ and $\{ p_1,p_2 \}$.  
Each $g \in N'(A)$ either preserves $\overline{C}$ or carries it to its \emph{opposite}, the closure of the negatively-oriented crooked plane having the same vertex and spine as $C$.
Now it is not hard to verify that the full automorphism group of $\overline{C}$ in $\SOh(3,2)$ is $G$.

\section{Construction of discrete groups}
\label{sec.discrete}
A {\em complete flat Lorentzian manifold\/} is a quotient $\Eno/\Gamma$,
where $\Gamma$ acts freely and properly discontinuously 
on $\Eno$.
When $n=2$, Fried and Goldman~\cite{FriedGoldman} 
showed that unless $\Gamma$ is solvable,
projection on $\Oh(2,1)$ is necessarily injective and,
furthermore, this linear part is a discrete subgroup 
$\Gamma_0\subset \Oh(2,1)$\cite{Abels,CDGM,Milnor}.

In this section we identify $\Eto$ with its usual embedding in $\EinTO$, so that we consider such $\Gamma$ as discrete subgroups of $\SOh(3,2)$.  We will look at the resulting actions on Einstein space, as well as on photon space.  At the end of the section, we list some open questions.

\subsection{Spine reflections}
\label{sub.discreteinv}

In \S \ref{sub.crookedauto}, we described the automorphism group of a crooked surface.  We recall some of the basic facts about the reflection in the spine of a crooked surface, which is discussed in \S \ref{sub.conformalinv} and \S \ref{sub.sub.lagrangianpair}, and which is denoted $s_0$ in the example above.  Take the inner product on $\RR^{3,2}$
to be given by the matrix 
$$\Id_2\oplus -\Id_1 \oplus \left(-\frac{1}{2}\right) \begin{bmatrix}0&1\\ 1&0\end{bmatrix}$$
and identify $\Eto$ with its usual embedding in the Minkowski patch determined by the improper point $p_\infty$.  Let $C$ be the crooked plane determined by the stem configuration $(p_0,p_1,p_2,p_\infty)$ as in \S \ref{sec:crookedexample}, with
\begin{equation*}
p_1=\begin{bmatrix}0\\ -1\\ 1\\ 0\\ 0\end{bmatrix} \mbox{~and~}
p_2=\begin{bmatrix}0\\ 1\\ 1\\ 0\\ 0\end{bmatrix}.
\end{equation*}
Then $s_0$
is an orientation-preserving,
time-reversing involution having fixed set
\begin{equation*}
\Fix(s_0) \;=\; 
\{p_1,p_2\}\,\cup\, \big( L(p_1)\cap L(p_2)\big) .
\end{equation*}
In the Minkowski patch,
$\langle s_0 \rangle$ interchanges the two components
of the complement of $C$.  

If a set of crooked planes in $\Eto$ is pairwise disjoint, then the group generated by reflections in their spines acts properly discontinuously on the entire space~\cite{CharetteGoldman,drumm1,drummgoldman}.  Thus spine reflections associated to disjoint crooked planes give rise to discrete subgroups of $\SOh(3,2)$.  We will outline a way to construct such groups; see~\cite{Charette}, for details.

Let $S_1,~S_2\subset\EinTO$ be a pair of spacelike circles that intersect in a point; conjugating if necessary, we may assume that this point is $p_\infty$.  Each circle $S_i$, $i=1,2$, is the projectivized nullcone of a subspace $V_i\subset\RR^{3,2}$ of type (2,1); $V_1+V_2$ can be written as the direct sum
 \begin{equation*}
 \RR v_1\oplus\RR v_2\oplus W,
 \end{equation*}
 where $v_1,~v_2$ are spacelike vectors and $W=V_1\cap V_2$ is of type (1,1).  We call $\{ S_1, S_2\}$ an {\em ultraparallel pair} if $v_1^\perp \cap v_2^\perp$ is spacelike.  Alternatively, we can define the pair to be ultraparallel if they are parallel to vectors $u_1,~u_2\in\Rto$ such that $u_1^\perp\cap u_2^\perp$
 is a spacelike line in $\Eto$.

Let $S_1$, $S_2$ be an ultraparallel pair of spacelike circles in $\EinTO$.  Denote by $\iota_1$ and $\iota_2$  the spine reflections fixing the respective circles.  (Note that $\iota_1$ and $\iota_2$ are conjugate to $s_0$, since $\SOh(3,2)$ acts transitively on crooked surfaces.) Identifying the subgroup of $\SOh(3,2)$ fixing $p_0$ and 
$p_\infty$ with the group of Lorentzian linear 
similarities 
\begin{equation*}
\Sim(\Eto) = \RR_+\cdot\Oh(2,1), 
\end{equation*}
then $\gamma=\iota_2\circ\iota_1$ has hyperbolic linear part---that is, it has three, distinct real eigenvalues.  The proof of this fact and the following proposition may be found, for instance, in~\cite{Charette}.

\begin{proposition}
Let $S_1$ and $S_2$ be an ultraparallel pair of spacelike circles as above.  Then $S_1$ and $S_2$ are the spines of a pair of disjoint crooked planes, bounding a fundamental domain for $\langle\gamma\rangle$ in $\Eto$.
\end{proposition}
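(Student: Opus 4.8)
The plan is to deduce the statement from the properness theorem for reflection groups generated by disjoint crooked planes quoted above, the genuinely geometric input being the disjointness of the two crooked planes. Throughout I work in the Minkowski patch $\Min(p_\infty)\cong\Eto$, in which each spacelike circle $S_i$ is the closure of a spacelike line $\sigma_i$; write $u_i\in\Rto$ for a spacelike vector spanning the direction of $\sigma_i$. The hypothesis that $\{S_1,S_2\}$ is ultraparallel means that $u_1^\perp\cap u_2^\perp$ is a spacelike line, which in the hyperboloid model of $\Oh(2,1)$ expresses exactly that the geodesics dual to $u_1$ and $u_2$ admit a common perpendicular and are neither asymptotic nor crossing.

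First I would record the dynamics of $\gamma$. The linear part of each spine reflection $\iota_i$ is the reflection of $\Oh(2,1)$ fixing $u_i^\perp$, so the linear part of $\gamma=\iota_2\circ\iota_1$ is the product of two reflections across ultraparallel geodesics. Such a product is hyperbolic, with invariant axis the common perpendicular $u_1^\perp\cap u_2^\perp$ and three distinct real eigenvalues; this is the fact, proved in~\cite{Charette} and already quoted above. In particular $\gamma$ has no fixed point in $\Eto$, so $\langle\gamma\rangle$ is infinite cyclic and acts freely. I also note that each $\iota_i$ is orientation-preserving and time-reversing, so $\gamma$ is orientation- and time-preserving.

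The heart of the argument is to realize $\sigma_1,\sigma_2$ as spines of a pair of \emph{disjoint} crooked planes. I would build crooked planes $C_1,C_2$ with spines $\sigma_1,\sigma_2$, retaining the vertices $p_i\in\sigma_i$ as free parameters, and then appeal to the disjointness criterion for crooked planes of Drumm--Goldman~\cite{drummgoldman} (see also~\cite{Charette}). The substance of that criterion is that two crooked planes with ultraparallel spines can be made disjoint precisely by sliding their vertices along the common perpendicular into the correct position, and that the hypothesis that $u_1^\perp\cap u_2^\perp$ is spacelike is exactly what makes such a disjoint placement possible --- for asymptotic or crossing spines the corresponding crooked planes necessarily meet. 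I expect this to be the main obstacle: checking the disjointness inequalities is the real geometric work, and it is where the hypothesis is consumed.

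Granting disjointness, the remainder is formal. Because $C_1$ and $C_2$ are disjoint crooked planes, the theorem quoted above~\cite{CharetteGoldman,drumm1,drummgoldman} shows that the group $\langle\iota_1,\iota_2\rangle$ generated by the two spine reflections acts properly discontinuously on $\Eto$, with the slab bounded by $C_1$ and $C_2$ as fundamental region; the subgroup $\langle\gamma\rangle$ therefore also acts properly discontinuously, and freely by the second paragraph, so $\Eto/\langle\gamma\rangle$ is a complete flat Lorentzian manifold. Finally I would identify the fundamental domain for $\langle\gamma\rangle$ itself, which is the time-preserving index-two subgroup of the dihedral group $\langle\iota_1,\iota_2\rangle$: reflecting the slab across $C_2$ adjoins the region lying between $C_2$ and $\gamma(C_1)=\iota_2(C_1)$, and the union is a region bounded by the two disjoint crooked planes $C_1$ and $\gamma(C_1)$, paired by $\gamma$, hence a fundamental domain for $\langle\gamma\rangle$. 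Thus $S_1,S_2$ are the spines of the disjoint pair $C_1,C_2$ cobounding the fundamental slab, and $\gamma$ pairs the walls of the associated fundamental domain, which completes the proof.
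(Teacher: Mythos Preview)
The paper does not give its own proof of this proposition; it defers to \cite{Charette}. Your approach---produce disjoint crooked planes with the given spines via the Drumm--Goldman disjointness criterion, then invoke the properness theorem for spine-reflection groups quoted just before the proposition, and finally double the dihedral fundamental slab to obtain one for the index-two subgroup $\langle\gamma\rangle$---is exactly the strategy the surrounding text outlines, so your proposal matches the paper's intended argument.
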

Note that while $\langle\gamma\rangle$ acts freely and properly discontinuously on $\Eto$, it fixes $p_\infty$ as well as two points on the ideal circle.

Next, let $S_i$, $i=1,2,3$ be a triple of pairwise ultraparallel spacelike circles, all intersecting in $p_\infty$, and let $\Gamma=\langle\iota_1,\iota_2,\iota_3\rangle$ be the associated group of spine reflections.  Then $\Gamma$ contains an index-two free group generated by hyperbolic isometries of $\Eto$ (see \cite{Charette}).  Conversely, we have the following generalization of a well-known theorem in hyperbolic geometry.

\begin{theorem}\cite{Charette}
Let $\Gamma=\langle\gamma_1,\gamma_2,\gamma_3\mid\gamma_1\gamma_2\gamma_3=Id\rangle$ be a subgroup of isometries of $\Eto$, where each $\gamma$ has hyperbolic linear part and such that their invariant lines are pairwise ultraparallel.  Then there exist spine reflections $\iota_i$, $i=1,2,3$, such that $\gamma_1=\iota_1\iota_2$, $\gamma_2=\iota_2\iota_3$ and $\gamma_3=\iota_3\iota_1$.
\end{theorem}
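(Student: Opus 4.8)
The plan is to reduce the triangle relation to the existence of a single reversing involution, and then to build that involution from the common perpendicular of two of the axes; the relation $\gamma_1\gamma_2\gamma_3 = \mathrm{Id}$ will do almost all of the combinatorial work for free.

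\emph{Algebraic reduction.} The first observation is that the hypothesis $\gamma_1\gamma_2\gamma_3 = \mathrm{Id}$ makes the factorization nearly formal once one reversing involution is available. Suppose we can produce a single spine reflection $\iota_2$ that \emph{reverses} both $\gamma_1$ and $\gamma_2$, i.e. $\iota_2\gamma_1\iota_2 = \gamma_1^{-1}$ and $\iota_2\gamma_2\iota_2 = \gamma_2^{-1}$ (recall spine reflections are involutions). Define $\iota_1 := \gamma_1\iota_2$ and $\iota_3 := \iota_2\gamma_2$. Then $\iota_1\iota_2 = \gamma_1\iota_2^2 = \gamma_1$ and $\iota_2\iota_3 = \gamma_2$ at once, while $\iota_1^2 = \gamma_1\iota_2\gamma_1\iota_2 = \gamma_1\gamma_1^{-1} = \mathrm{Id}$ and likewise $\iota_3^2 = \mathrm{Id}$, so both are involutions. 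Finally $\iota_3\iota_1 = \iota_2\gamma_2\gamma_1\iota_2 = (\iota_2\gamma_2\iota_2)(\iota_2\gamma_1\iota_2) = \gamma_2^{-1}\gamma_1^{-1} = (\gamma_1\gamma_2)^{-1} = \gamma_3$, using the relation. Thus the whole statement follows once $\iota_2$ is exhibited and once $\iota_1,\iota_3$ are shown to be genuine spine reflections rather than mere involutions.

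\emph{Why $\iota_1$ and $\iota_3$ are spine reflections.} Here I would use that a hyperbolic $\gamma_1$ lies in a one-parameter subgroup $\{\gamma_1^{t}\}$ gliding along its invariant axis $A_1$, so the square root $\gamma_1^{1/2}$ is defined and the reversal extends to the whole flow: $\iota_2\gamma_1^{t}\iota_2 = \gamma_1^{-t}$. Consequently $\gamma_1^{1/2}\iota_2\gamma_1^{-1/2} = \gamma_1^{1/2}(\gamma_1^{1/2}\iota_2) = \gamma_1\iota_2 = \iota_1$, so $\iota_1$ is conjugate to $\iota_2$ and hence is itself a spine reflection (conjugate to $s_0$ in $\SOh(3,2)$), with spine $\gamma_1^{1/2}(\sigma)$ where $\sigma$ is the spine of $\iota_2$. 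The same computation with $\gamma_2^{-1/2}$ shows $\iota_3 = \gamma_2^{-1/2}\iota_2\gamma_2^{1/2}$ is a spine reflection.

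\emph{Construction of $\iota_2$ and the main obstacle.} It remains to produce the reversing spine reflection $\iota_2$, and this is where the geometry and the ultraparallel hypothesis enter. I would take $\iota_2$ to be the spine reflection in the common perpendicular $\sigma$ of the two axes $A_1$ and $A_2$: reflection in a spacelike line crossing $A_i$ orthogonally reverses both the boost of $\gamma_i$ in the timelike plane $A_i^\perp$ and the glide of $\gamma_i$ along $A_i$, hence conjugates $\gamma_i$ to $\gamma_i^{-1}$. The role of the ultraparallel hypothesis is precisely to guarantee that this common perpendicular exists and is \emph{spacelike} (in the sense that $u_1^\perp\cap u_2^\perp$ is spacelike), so that $\iota_\sigma$ is a bona fide spine reflection. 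The main obstacle is this geometric input: establishing existence and uniqueness of the common perpendicular of two ultraparallel spacelike affine lines in $\Eto$, confirming its spacelikeness, and verifying carefully---at the level of affine isometries, not merely their linear parts---that the associated spine reflection reverses the full hyperbolic element. As a consistency check, and as the symmetric form of the argument that mirrors the classical hyperbolic triangle theorem, the three pairwise common perpendiculars $\sigma_1,\sigma_2,\sigma_3$ of the axes yield three spine reflections, and the relation $\gamma_1\gamma_2\gamma_3 = \mathrm{Id}$ forces the ``hexagon'' they bound to close up, so that the spine $\gamma_1^{1/2}(\sigma_2)$ of $\iota_1$ coincides with the common perpendicular of $A_3$ and $A_1$; the assumption that all three pairs are ultraparallel is what makes all three of these spines spacelike.
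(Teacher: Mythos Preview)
The paper does not supply its own proof of this theorem; it is stated with a citation to \cite{Charette} and described as a Lorentzian generalization of a well-known result in hyperbolic geometry, so there is no in-paper argument to compare against.

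Your approach is sound and mirrors the classical proof for hyperbolic triangle groups. The algebraic reduction is correct, and the conjugacy argument showing $\iota_1,\iota_3$ are spine reflections is valid once you note that an affine isometry of $\Eto$ with hyperbolic linear part lies in a \emph{unique} one-parameter subgroup (with origin on the axis, write $\gamma_1(x)=e^{X_1}x+\alpha u_1$ where $X_1 u_1=0$, and set $\gamma_1^t(x)=e^{tX_1}x+t\alpha u_1$). The geometric step you flag as the obstacle is the heart of the matter but is not difficult: if $A_i=p_i+\RR u_i$ with $u_1,u_2$ independent, the common perpendicular has direction $w$ spanning $u_1^\perp\cap u_2^\perp$, which is one-dimensional and---by the paper's definition of ultraparallel---spacelike; the foot points are obtained by expanding $p_2-p_1$ in the basis $\{u_1,u_2,w\}$. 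That the spine reflection $\iota_\sigma$ reverses $\gamma_i$ is then a direct check: its linear part $R$ fixes $w$ and negates $w^\perp\ni u_i$, hence preserves the timelike plane $u_i^\perp$ and acts there as a reflection, conjugating the boost $L_i|_{u_i^\perp}$ to its inverse, while the translation $\alpha u_i$ is sent to $-\alpha u_i$. One sharpening worth recording: as written, your construction uses only that $A_1,A_2$ are ultraparallel; the ultraparallelism of the remaining pairs is then a \emph{consequence}, since the spine of $\iota_1$ (respectively $\iota_3$) must be the common perpendicular of $A_1,A_3$ (respectively $A_2,A_3$) and is spacelike by construction.
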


Note that $\Gamma$ as above is discrete.  Indeed, viewed as a group of affine isometries of $\Eto$, its linear part $G\leq\Oh(2,1)$ acts on the hyperbolic plane and is generated by reflections in three ultraparallel lines.  
As mentioned before, if the spacelike circles are spines of pairwise disjoint crooked planes, then $\Gamma$ acts properly discontinuously on the Minkowski patch.  Applying this strategy, we obtain that the set of all properly discontinuous groups $\Gamma$, with linear part generated by three ultraparallel reflections, is non-empty and open~\cite{Charette}.

Here is an example. For $i=1,2,3$, let $V_i\subset\RR^{3,2}$ be the $(2,1)$-subspace 
\begin{equation*}
V_i=\left\{
\begin{bmatrix} 
au_i+cp_i\\ 
a\langle u_i,p_i\rangle+b+c\langle p_i,p_i\rangle \\
c
\end{bmatrix}
~\mid~a,b,c\in\RR 
\right\}                      ,
\end{equation*}
where 
\begin{align*}
u_1 &= \begin{bmatrix} \sqrt{2} & 0 & 1\end{bmatrix}^\dagger & u_2 &= \begin{bmatrix} -\frac{\sqrt{2}}{2} & \frac{\sqrt{6}}{2} & 1\end{bmatrix}^\dagger  & u_3 &= \begin{bmatrix} -\frac{\sqrt{2}}{2} & -\frac{\sqrt{6}}{2} & 1\end{bmatrix}^\dagger ~\\
p_1 &= \begin{bmatrix} 0 & \sqrt{2}  & 1\end{bmatrix}^\dagger  & p_2 &= \begin{bmatrix} -\frac{\sqrt{6}}{2}  & -\frac{\sqrt{2}}{2} & 1\end{bmatrix}^\dagger & p_3 &= \begin{bmatrix} \frac{\sqrt{6}}{2} & & -\frac{\sqrt{2}}{2}  & 1\end{bmatrix}^\dagger .
\end{align*}
Then the projectivized nullcone of $V_i$ is a spacelike circle---in fact, it corresponds to the spacelike geodesic in $\Eto$ passing through $p_i$ and parallel to $u_i$.  The crooked planes with vertex $p_i$ and spine $p + \RR u_i$, respectively, are pairwise disjoint (one shows this using inequalities found in~\cite{drummgoldman}).

\subsection{Actions on photon space}
\label{sub.phoaction}
Still in the same Minkowski patch as above, let $G$ be a finitely generated discrete subgroup of $\Oh(2,1)$ that is free and {\em purely hyperbolic}---that is, every nontrivial element is hyperbolic.  Considered as a group of isometries of the hyperbolic plane, $G$ is a convex cocompact free group.  By Barbot~\cite{Barbot},

\begin{theorem}\label{thm:omegapd}
Let $\Gamma$ be a subgroup of isometries of $\Eto$ with convex cocompact linear part.  Then there is a pair of non-empty, $\Gamma$-invariant, open, convex sets $\Omega^\pm\subset \Eto$ such that
\begin{itemize}
\item The action of $\Gamma$ on $\Omega^\pm$ is free and proper;
\item The quotient spaces $\Omega^\pm/\Gamma$ are globally hyperbolic;
\item Each $\Omega^\pm$ is maximal among connected open domains satisfying these two properties; 
\item The only open domains satisfying all three properties above are $\Omega^\pm$.
\end{itemize}
\end{theorem}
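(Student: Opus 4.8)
The plan is to realize $\Omega^{+}$ and $\Omega^{-}$ as the future-complete and past-complete \emph{regular domains} determined by the limit set of the linear part, and then to control their quotients by means of the cosmological time function. Write each $\gamma\in\Gamma$ as $x\mapsto\gamma_{0}x+u(\gamma)$, where $\gamma_{0}\in\Oh(2,1)$ is the linear part and $u\colon\Gamma\to\Rto$ is the associated cocycle, $u(\gamma\delta)=u(\gamma)+\gamma_{0}u(\delta)$. Let $\Gamma_{0}=\{\gamma_{0}\}$ be the (convex cocompact, purely hyperbolic) linear part, acting on the hyperboloid model $\mathbb{H}^{2}$ of future timelike directions with limit set $\Lambda\subset\partial_{\infty}\mathbb{H}^{2}\cong S_{\infty}$; a point of $\Lambda$ is thus a null direction in $\Rto$. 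First I would construct a $\Gamma$-equivariant support function $h\colon\Lambda\to\RR$, normalizing a future null vector $\xi$ in each direction of $\Lambda$ and requiring
\[
h(\gamma_{0}\xi)=h(\xi)+\langle u(\gamma),\gamma_{0}\xi\rangle .
\]
Existence of such an $h$, bounded and continuous on the compact set $\Lambda$, follows from cocompactness of the $\Gamma_{0}$-action on $\operatorname{Hull}(\Lambda)$, which lets one integrate the cocycle condition over a compact fundamental domain; it is unique up to adding the restriction of a linear functional. I then set
\[
\Omega^{+}:=\operatorname{int}\{\,x\in\Rto\ \mid\ \langle x,\xi\rangle<h(\xi)\ \text{for all }\xi\in\Lambda\,\},
\]
the interior of the intersection of the future sides of the null affine planes $\{\langle\,\cdot\,,\xi\rangle=h(\xi)\}$, and define $\Omega^{-}$ symmetrically from the past sides. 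Equivariance of $h$ makes each $\Omega^{\pm}$ invariant under $\Gamma$; each is convex as an intersection of affine half-spaces, open by construction, and nonempty because a bounded support function leaves a full future sub-cone inside the intersection.

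The next step is properness, freeness, and global hyperbolicity. On $\Omega^{+}$ the cosmological time $\tau(x)=\sup_{\sigma}\operatorname{length}(\sigma)$, the supremum of Lorentzian lengths of past-directed causal curves $\sigma$ terminating at $x$, is finite and, being metrically defined, is $\Gamma$-invariant. By the regularity theory of cosmological time for regular domains, $\tau$ is a proper, concave time function whose level sets $\tau^{-1}(c)$ are Cauchy surfaces; this gives global hyperbolicity of $\Omega^{+}$, and hence of the quotient, the projection of a level set being a Cauchy surface downstairs. The gradient of $\tau$ defines a $\Gamma$-equivariant \emph{Gauss map} $N\colon\Omega^{+}\to\mathbb{H}^{2}$, sending $x$ to the future unit timelike direction of $\nabla\tau$ and intertwining the $\Gamma$-action with the $\Gamma_{0}$-action on $\mathbb{H}^{2}$. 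Since $N$ is proper and $\Gamma_{0}$ acts properly discontinuously on $\mathbb{H}^{2}$, the action of $\Gamma$ on $\Omega^{+}$ is properly discontinuous. Freeness is then immediate: a fixed point of a nontrivial $\gamma$ would map under $N$ to a fixed point of $\gamma_{0}$ in $\mathbb{H}^{2}$, impossible for the hyperbolic $\gamma_{0}$; and if $\gamma_{0}=\Id$ then $\gamma$ is a nontrivial translation, which has no fixed point.

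For maximality and uniqueness the plan is to identify $\Omega^{+}$ with the maximal globally hyperbolic Cauchy development of any of its Cauchy surfaces $S=\tau^{-1}(c)$: one shows $\Omega^{+}=D(S)$, the domain of dependence, so that no achronal surface can be pushed beyond the null planes $\{\langle\,\cdot\,,\xi\rangle=h(\xi)\}$ without losing the Cauchy property. Any connected open $\Omega$ carrying a free, proper $\Gamma$-action with globally hyperbolic quotient is itself the development of a Cauchy surface; its own Gauss map lands it inside a regular domain with limit set $\Lambda$, and a time-orientation dichotomy places it in $\Omega^{+}$ or in $\Omega^{-}$. Maximality of $\Omega^{\pm}$ then forces equality, so these two are the only such domains.

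The hard part will be the two regularity inputs: first, the full theory of the cosmological time function on a regular domain (finiteness, concavity, that its level sets are genuine Cauchy surfaces, and smoothness of the Gauss map), and second, the Choquet--Bruhat--Geroch style identification of $\Omega^{\pm}$ as the maximal globally hyperbolic developments together with the uniqueness of the maximal development. These are exactly the results of Bonsante and of Barbot~\cite{Barbot}; reconstructing them, rather than the formal packaging above, is where the real work lies.
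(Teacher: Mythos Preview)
The paper does not prove this theorem at all: it is stated as a result of Barbot~\cite{Barbot} and simply cited, with no argument given. So there is no ``paper's own proof'' to compare against.

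That said, your outline is essentially the strategy of Barbot's paper (combined with Bonsante's regularity theory for the cosmological time on regular domains), as you yourself note in your final paragraph. The construction of $\Omega^{\pm}$ as regular domains cut out by null support planes indexed by the limit set, the use of the cosmological time to produce Cauchy surfaces and a proper equivariant Gauss map to $\mathbb{H}^2$, and the identification of $\Omega^{\pm}$ with maximal Cauchy developments are all the right ingredients. One technical point to watch: your equivariance condition on $h$ is written for a fixed normalization of the null vectors $\xi$, but $\gamma_0$ rescales null vectors, so $h$ should really be treated as a $1$-homogeneous function on the null cone (or equivalently as a section of a line bundle over $\Lambda$); the existence argument via cocompactness then goes through cleanly. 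Beyond that, your sketch is an honest summary of where the actual work lies, and it matches the cited source.
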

The notion of global hyperbolicity is central in 
General Relativity, see for example \cite{beem}. The global hyperbolicity
of $\Omega^\pm/\Gamma$ implies that it is 
homeomorphic to the product $( {\bf H}^2 / G) \times \RR$. It also implies that no element of $\Gamma$ preserves a null ray in
$\Omega^\pm$.  

Let $\Gamma$ be as in Theorem~\ref{thm:omegapd} and consider its action, for instance, on $\Omega^+$.  Since $\Omega^+/\Gamma$ is globally hyperbolic, it admits a {\em Cauchy hypersurface}, a spacelike  surface  $S_0$ which meets every complete causal curve and with complement consisting of two connected components.  The universal covering  $\widetilde{S}_0$ is
$\Gamma$-invariant.  
The subset $\Pho_0\subset \Pho$ comprising photons which
intersect $\widetilde{S}_0$ is open.

We claim that $\Gamma$ acts freely and properly on
$\Pho_0$.  Indeed, let $K\leq \Pho_0$ be a compact set.  Then $K$ is
contained in a product of compact subsets $K_1\times K_2$, where
$K_1\subset \widetilde{S}_0$ and $K_2\subset S^1$, the set of photon directions.
The action of $\Gamma$ restricts to a Riemannian action on $\widetilde{S}_0$.
Thus the set 
\begin{equation*}
\{ \gamma\in\Gamma~\mid~\gamma(K_1)\cap K_1\neq\emptyset\} 
\end{equation*}
 is finite.  As $\widetilde{S}_0$ is spacelike, it follows that
$\{ \gamma\in\Gamma~\mid~\gamma(K)\cap K\neq\emptyset\}$ is finite
too.  Finally, global hyperbolicity of $\Omega^+/\Gamma$ implies
that no photon intersecting $\Omega^+$ is
invariant under the action of any element of $\Gamma$.

\begin{corollary}
There exists a non-empty open subset of $\Pho$ on which
$\Gamma$ acts freely and properly discontinuously.
\end{corollary}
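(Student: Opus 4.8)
The plan is to take $\Pho_0$ itself---the set of photons meeting $\widetilde{S}_0$, the lift to $\Omega^+$ of a Cauchy hypersurface $S_0$ of the globally hyperbolic quotient $\Omega^+/\Gamma$ supplied by Theorem~\ref{thm:omegapd}---as the desired open subset, and to verify the two assertions (properness and freeness) stated just above the corollary. First I would record that $\Pho_0$ is non-empty and open: each point of $\widetilde{S}_0 \subset \Eto$ lies on a circle's worth of photons, and the condition of meeting the open surface $\widetilde{S}_0$ transversally is an open condition in $\Pho$.

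The heart of the properness argument is to produce a continuous $\Gamma$-equivariant map $\pi : \Pho_0 \to \widetilde{S}_0$. Since $S_0$ is a Cauchy hypersurface of the globally hyperbolic spacetime $\Omega^+/\Gamma$, its lift $\widetilde{S}_0$ is a spacelike Cauchy hypersurface of $\Omega^+$, hence is met in exactly one point by every inextensible causal curve in $\Omega^+$. A photon $\phi \in \Pho_0$ is a null geodesic, so the segment $\phi \cap \Omega^+$ is causal and meets $\widetilde{S}_0$ in a single point; I set $\pi(\phi)$ equal to that point. Because $\Gamma$ preserves both $\Omega^+$ and $\widetilde{S}_0$ and acts by isometries, $\pi$ is $\Gamma$-equivariant. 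This $\pi$ is precisely the projection onto the first factor in the product description $K \subset K_1 \times K_2$ used in the text, with $K_1 \subset \widetilde{S}_0$ a base point and $K_2 \subset S^1$ a null direction.

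Now, since $\Gamma$ acts properly on $\Omega^+$ and $\widetilde{S}_0$ is a closed $\Gamma$-invariant subset, the restricted action on $\widetilde{S}_0$ is a properly discontinuous action by Riemannian isometries; that is, for each compact $K_1 \subset \widetilde{S}_0$ the set $\{ \gamma \in \Gamma \mid \gamma K_1 \cap K_1 \neq \emptyset \}$ is finite. Given a compact $K \subset \Pho_0$, I put $K_1 := \pi(K)$, which is compact; equivariance of $\pi$ shows that $\gamma K \cap K \neq \emptyset$ forces $\gamma K_1 \cap K_1 \neq \emptyset$, so $\{ \gamma \in \Gamma \mid \gamma K \cap K \neq \emptyset \}$ is finite and the $\Gamma$-action on $\Pho_0$ is proper. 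For freeness, if some $\gamma \neq 1$ fixed a photon $\phi \in \Pho_0$, then $\gamma$ would preserve the null ray contained in $\phi \cap \Omega^+$, contradicting the remark following Theorem~\ref{thm:omegapd} that global hyperbolicity of $\Omega^+/\Gamma$ forbids any element of $\Gamma$ from preserving a null ray in $\Omega^+$; hence every photon in $\Pho_0$ has trivial stabilizer.

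I expect the main obstacle to be the first step of the properness argument: justifying that each photon in $\Pho_0$ meets $\widetilde{S}_0$ in exactly one point and that the resulting $\pi$ is continuous. This requires confirming that the portion of a photon lying in $\Omega^+$ is an inextensible causal curve there, so that the defining property of a Cauchy hypersurface genuinely applies, and that this single intersection point depends continuously on $\phi$. Once the single-intersection property, continuity, and equivariance of $\pi$ are secured, properness reduces cleanly to the properness of the isometric $\Gamma$-action on the Riemannian surface $\widetilde{S}_0$, and freeness is immediate from the absence of $\Gamma$-invariant null rays.
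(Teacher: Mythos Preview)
Your proposal is correct and follows essentially the same approach as the paper: you take the same open set $\Pho_0$, reduce properness to the proper Riemannian action of $\Gamma$ on $\widetilde{S}_0$, and deduce freeness from the absence of $\Gamma$-invariant null rays in $\Omega^+$. Your explicit equivariant map $\pi:\Pho_0\to\widetilde{S}_0$ (via the unique intersection with the Cauchy hypersurface) is just a cleaner packaging of the paper's product description $K\subset K_1\times K_2$, and the obstacle you flag---that $\phi\cap\Omega^+$ be an inextensible causal curve---is handled by the convexity of $\Omega^+$ in $\Eto$.
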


\subsection{Some questions}
So far we have considered groups of transformations of $\EinTO$ and $\Pho$ arising from discrete groups of Minkowski isometries.  Specifically, we have focused on groups generated by spine reflections associated to spacelike circles intersecting in a point.

{\em Question:} Describe the action on $\EinTO$ of a group generated by spine reflections corresponding to non-intersecting spacelike circles.  In particular, determine the possible dynamics of such an action.

A related question is:

{\em Question:} What does a crooked surface look like when its spine does not pass through $p_\infty$, or the lightcone at infinity altogether?  Describe the action of the associated group of spine reflections.

More generally, we may wish to consider other involutions in the automorphism group of a crooked surface.

{\em Question:} Describe the action on $\EinTO$ of a group generated by involutions, in terms of their associated crooked surfaces.

As for the action on photon space, here is a companion question to those asked in \S\ref{sec.dynamic}:

{\em Question:} Given a group generated by involutions, what is the maximal open subset of $\Pho$ on which the group acts properly discontinuously?

\frenchspacing

\printindex

\end{document}